\renewcommand*{\dot}[1]{%
  \accentset{\mbox{\large\bfseries .}}{#1}}
\numberwithin{equation}{section}
\DeclareMathOperator{\sgn}{sgn}
\DeclareMathOperator{\imag}{i}
\newcommand{\vx}{\mathbf{x}}
\newcommand{\vn}{\mathbf{n}}
\newcommand{\supp}{\operatorname{supp}}
\newtheorem{theorem}{Theorem}
\newtheorem{lemma}[theorem]{Lemma}
\newtheorem{remark}[theorem]{Remark}
\begin{document}

\title{On a multiscale formulation for multiperforated plates}

\author{Kersten Schmidt\thanks{Department of Mathematics, Technical University of
Darmstadt, Dolivostr. 15, 64293 Darmstadt, Germany} \and Sven Pfaff$^*$}%

\maketitle

\begin{abstract}
  Multiperforated plates exhibit high gradients and a loss of regularity concentrated in a boundary layer and so a direct numerical simulation becomes very expensive. %
  For elliptic differential equations the solution at some distance of the boundary is only affected in an effective way and the macroscopic and mesoscopic                                                                                                                                                                                                                                                                                                                                                                                                                                                                                                                                                                                                                                                                                                                                                                                                                                                                                                                                                                                                                                                                                                                                                                                                                                                                                                                                                                                                                                                                             behaviour can be separated. %
  A~multiscale formulation in the spirit of the heterogeneous multiscale method
  is introduced on the example of the Poisson equation. %
  Based on the method of matched asymptotic expansion the solution is separated into a macroscopic far field defined in a domain with only slowly varying boundary and
  a mesoscopic near field defined in scaled coordinates on possibly varying infinite periodicity cells. %
  The near field has a polynomial behaviour that is coupled to the traces of the macroscopic variable on the mid-line of the multiperforated plate. %
  A variational formulation using a Beppo-Levi space in the strip is introduced and its well-posedness is shown. The variational framework when truncating the infinite strip is discussed and the truncation error is estimated.
\end{abstract}

%


\pagestyle{myheadings}
\thispagestyle{plain}
\markboth{K. Schmidt and S. Pfaff}{On a multiscale formulation for multiperforated plates}

\section{Introduction}

This paper considers the solution of second order elliptic differential equations in presence of multiperforated plates or thin mesh like structures with locally periodic pattern that may be vary on a macroscopic scale. Multiperforates plates can be applied to reduce acoustic noise pollution~\cite{Lahiri.Sadig.Gerendas.Enghardt.Bake:2011} in lecture halls, concert halls or in car mufflers. They can be applied to suppress thermoacoustic instabilities and for cooling in combustion chambers or as sieves to control fluid flows. %
Moreover, thin mesh-like structures consisting of metallic wires -- so called Faraday cage effect -- can effectively shield  electric fields, and similar structures of elastic rods -- the stents -- are used in blood vessels in human body where they influence the flow.

Due to the multiple scales in the geometry and consequently the solution a direct numerical simulation of such problems is very costly. %
If finite element methods are used the mesh-width needs to be as small as the smallest geometrical scale, at least in an adaptive refinement towards the perforated plate or thin mesh-like structure. %

Therefore, models for the macroscopic fields are proposed that take the thin  microstructures with effective boundary or transmission conditions into account. %
Their derivation relies on the observation that the small scale variations of the solution
have a boundary layer behaviour and decay exponentially away from the thin microstructure.
This boundary layer have been widely studied since the works of Sanchez-Palencia~\cite{SanchezPalencia:1980,SanchezPalencia:1985}, Achdou~\cite{Achdou:1989,Achdou:1992} and Artola and Cessanat~\cite{Artola.Cessenat:1991,Artola.Cessenat:1991a}.

With a combination of homogenisation techniques 
and the method of matched asymptotic expansions 
or multiscale expansions 
an asymptotic expansion of the near field and far field solution can be derived. 
This asymptotic technique is sometimes called \emph{surface homogenisation}. %

An asymptotic expansion of order~1 has been obtained for the acoustic wave propagation through an perforated duct of vanishing thickness~\cite{Bonnet.Drissi.Gmati:2004}, where in the limit of vanishing period the perforated duct becomes transparent~\cite{Bonnet.Drissi.Gmati:2005}. %
For the scattering by a thin ring of regularly-spaced inclusions 
an asymptotic expansion of any order has been derived and justified in~\cite{Claeys.Delourme:2013}, and in~\cite{Delourme.Haddar.Joly:2012} an approximative model with transmission conditions of order~2 has been derived and justified. %
In~\cite{Chapman.Hewett.Trefethen:2015} an approximate model for the Poisson problem with regularly spaced small inclusions with Dirichlet boundary conditions is derived with a three-scale expansion where the size of the inclusions and the distance to their nearest neighbour are considered as independent scales, which is extended to the Helmholtz equation in~\cite{Hewett.Hewitt:2016}. %
Approximate boundary conditions for regularly spaced inclusions of a different material as in the surrounding area has been derived for the Poisson equation via a two-scale homogenisation in~\cite{Marigo.Maurel:2016-2,Maurel.Marigo.Ourir:2016}, and the method was applied for inclusions with Dirichlet or Neumann boundary conditions in~\cite{Delourme.Luneville.Marigo.Maurel.Mercier.Pham:2021}.

Alternatively to the surface homogenisation the periodic unfolding method~\cite{Cionarescu.Damlamiam.Griso:2002} that is based on the two-scale convergence~\cite{Nguetseng:1989} 
was extended to a multiperforated plates for the Helmholtz equation in~\cite{Lukes.Rohan:2007,Cioranescu.Damlamian.Griso.Onofrei:2008}. An approximative model for the acoustic-structure interaction with elastic multiperforated plates was derived with periodic unfolding in~\cite{Rohan.Lukes:2019}.

For multiperforated acoustic liners with small viscosity a third scale for the hole size has been considered in surface homogenisation to obtain approximative models and transmission conditions in two dimensions~\cite{Semin.Schmidt:2018} and three dimensions~\cite{Schmidt.Semin.ThoensZueva.Bake:2018}. %
The surface homogenisation can be extended to multiperforated plates of finite size by incorporating additional terms for corner singularities~\mbox{\cite{Delourme.Schmidt.Semin:2016,Semin.Delourme.Schmidt:2018}}.

Based on the homogenisation theory~\cite{Bensoussan.Lions.Papanicolaou:1978,SanchezPalencia:1980} for periodic microstructures in all space directions numerical methods were proposed. %
The heterogenous multiscale method~\cite{E.Engquist:2003} (HMM) aims to provide a numerical solution to the limit equations where local cell problems are solved on quadrature points of the finite element mesh. This allows for locally periodic microstructures, where the local problems may change slowly. A complete numerical analysis of the method in terms of the macroscopic mesh-width and the mesh-width of the local cell problems was given in~\cite{Abdulle:2005,Abdulle.E.Engquist.VandenEijnden:2012}. Matache and Schwab proposed a two-scale finite element method~\cite{Matache.Schwab:2002-2,Matache:2002} whose computational effort is independent of the small scale, even so it is not based on analytic homogenisation and local cell problems, but on two-scale regularity results.

In this paper we aim to propose and analyse a coupled variational formulation for the far and near field that are present in the surface homogenisation. For the coupling the principles of the method of matched asymptotic expansions are applied. Then the near and far field in the variational formulation can be discretised by finite elements which shall be presented in a forthcoming paper.

The article is structured as follows. In Section~\ref{sec:model problem} the geometry and model problem with the solution decomposition in the macroscopic far field and near field is introduced. 
With matching conditions based on the method of matched asymptotic expansions the formulation of the coupled problem is stated. In Section~\ref{sec:near_field} a variational framework for the near field problem in an infinite strip using a Beppo-Levi space is introduced, its well-posedness is shown and the error introduced by the truncation of the strip is estimated. Finally, in Section~\ref{sec:var_coupled} the well-posedness of the coupled formulation is proven and the truncation error is estimated. 


\section{The geometric setting and the model problem}
\label{sec:model problem}

\subsection{Domain with multiperforated plates}

Let $\Omega \subset \mathbb{R}^2$ be an open, bounded Lipschitz domain. %
In this domain we consider a perforated wall that is defined in a vicinity of a closed $C^1$ curve $\Gamma$, that we call mid-line, with unit normal vector $\vn$. %
The curve $\Gamma$ is parametrized with $\vx_\Gamma: [0,1) \to \Gamma$ where $c_\Gamma \leq |\vx_\Gamma'(x)| \leq C_\Gamma$ with $c_\Gamma, C_\Gamma > 0$. %
The normalized normal vector $\vn$ on $\Gamma$ is given by %
 $(\vx_\Gamma'(x))^\bot, x \in [0,1)$ where $\mathbf{v}^\bot = (v_2, -v_1)^\top$ for any $\mathbf{v} = (v_1, v_2)^\top$ is the vector that is turned clock-wise by~$90^\circ$. %
 Using the parametrization of the vicinity of $\Gamma$
\begin{align}
    \boldsymbol{\phi}: (x,y) \mapsto \vx_\Gamma(x) + y \vn_\Gamma(x)\ .
    \label{eq:phi}
\end{align}
we define the perforated domain as
\begin{align}
   \Omega^\varepsilon = \Omega \setminus \bigcup_{n=1}^{\nicefrac{1}{\varepsilon}} \overline{\boldsymbol{\phi} \left( \varepsilon\widehat{\Omega}_w (\boldsymbol{\phi} ( \varepsilon (n-\tfrac{1}{2}), 0))\right)} \ ,
   \label{eq:Omega_eps}
\end{align}
where $\varepsilon > 0$ is a small parameter corresponding to period in the parameter domain $[0,1)$ with $\nicefrac{1}{\varepsilon} \in \mathbb{N}$. %
Here, ~$\widehat{\Omega}_w(\vx)$, $\vx \in \Gamma$ is the local wall pattern, which is for each $\vx \in \Gamma$ an open Lipschitz domain in $(0,1) \times [-R_0,R_0]$ for some $R_0 > 0$. %
The outer normalized normal vector field on $\partial\widehat{\Omega}_w(\vx)$ is denoted by $\widehat{\vn}(\vx, X, Y) = (\widehat{n}_1(\vx, X, Y), \widehat{n}_2(\vx, X, Y))^\top$. %
For simplicity we assume that the local wall pattern match between the left and right side, i.e.,
\begin{align*}
 \widehat{I}(\vx) := \left\{ Y \in \mathbb{R}, (0,Y)^\top \not\in \partial\widehat{\Omega}_w(\vx) \right\} =
 \left\{ Y \in \mathbb{R}, (1,Y)^\top \not\in \partial\widehat{\Omega}_w(\vx) \right\}.
\end{align*}
Moreover, $\widehat{\Omega}(\vx) = ((0,1) \times \mathbb{R}) \setminus \overline{\widehat{\Omega}_w(\vx)}$ denotes the periodicity cell on $\vx \in \Gamma$. %
Assuming $\widehat{\Omega}_w(\vx)$ to depend continuously on $\vx$ in a finite partition of $\Gamma$ the perforated wall is called \emph{locally periodic}.

\begin{figure}
   \centering
   (a)
   \quad
   \begin{tikzpicture}[scale=1]
     \begin{axis}[enlargelimits=false,xtick=\empty,ytick=\empty]
       \addplot graphics [xmin=0, xmax=1.5, ymin=0, ymax=1.5] {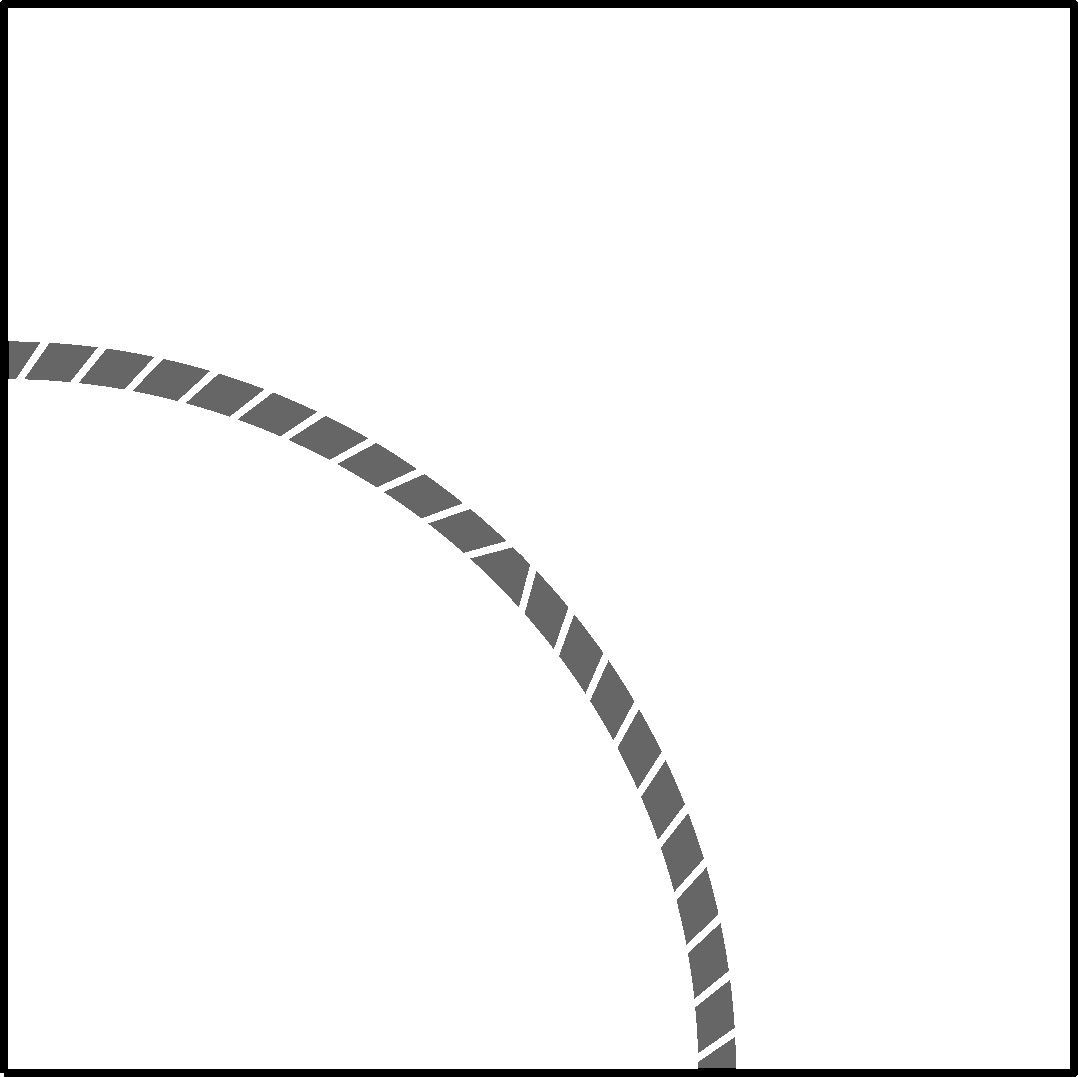};
       \addplot[no markers] (1.2,1.2) node {\Large$\Omega^\varepsilon$};
     \end{axis}
  \end{tikzpicture}
  \qquad
  (b)
  \resizebox{0.12\linewidth}{!}{
  \hspace{-8em}
  \begin{tikzpicture}[scale=1]
    \begin{axis}[
    xmin=-0.2,xmax=1.05, ymin=-2.5, ymax=2.7,
    xtick=\empty,ytick=\empty,
                 axis equal,
                 axis line style={draw=none},
                ]
       \addplot graphics [xmin=0, xmax=1, ymin=-1, ymax=1] {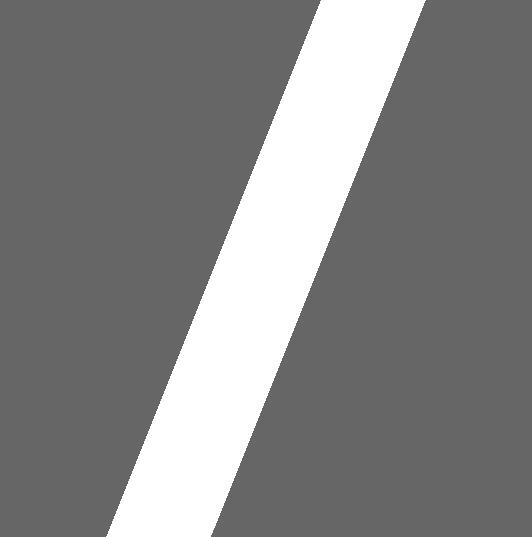};
       \addplot[dashed] coordinates {(0,-10) (0,2.5)};
       \addplot[dashed] coordinates {(1,-10) (1,2.5)};
       \addplot[no markers] ( 0.25, 0.75) node {$\widehat{\Omega}_w$};
    \end{axis}
  \end{tikzpicture}
   \hspace{-8em}
  }
  \resizebox{0.12\linewidth}{!}{
  \hspace{-8em}
  \begin{tikzpicture}[scale=1]
    \begin{axis}[
    xmin=-0.2,xmax=1.05, ymin=-2.5, ymax=2.7,
    xtick=\empty,ytick=\empty,
                 axis equal,
                 axis line style={draw=none},
                 ]
       \addplot graphics [xmin=0, xmax=1, ymin=-1, ymax=1] {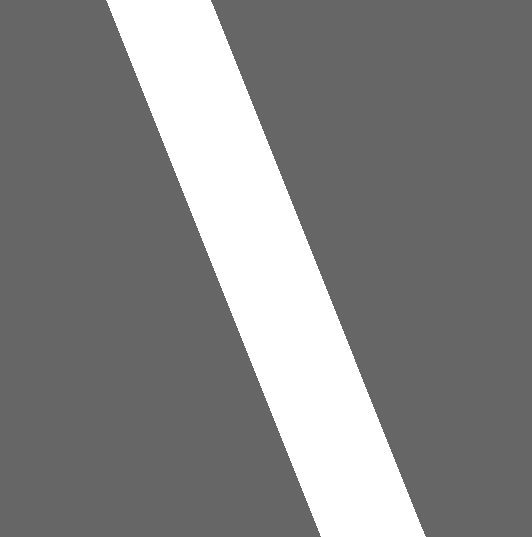};
       \addplot[dashed] coordinates {(0,-2.5) (0,2.5)};
       \addplot[dashed] coordinates {(1,-2.5) (1,2.5)};
       \addplot[no markers] ( 0.75, 0.75) node {$\widehat{\Omega}_w$};
    \end{axis}
  \end{tikzpicture}
  \hspace{-8em}
  }
  %
  \caption{Illustration of (a) a domain $\Omega^\varepsilon$ with perforated wall with (b) two occuring wall pattern $\widehat{\Omega}_w$.}
  \label{fig:Omega_eps}
\end{figure}

\subsection{Poisson problem in the perforated domain}

In the perforated domain $\Omega^\varepsilon$ we state the model problem
\begin{align}
  \left\{
 \begin{aligned}
    -\Delta u^\varepsilon &= f \text{ in } \Omega^\varepsilon\ ,\\[0.5em]
    \partial_n u^\varepsilon &= 0 \text{ on } \partial\Omega^\varepsilon\ , \\
    \int_{\Omega_f} u^\varepsilon \,\text{d}\vx &= 0\ ,
 \end{aligned}
  \right.
  \label{eq:Poisson:eps}
\end{align}
where $\Omega_f := \supp f \subset \Omega \setminus \Gamma$ with $\operatorname{dist}(\supp f, \Gamma) > 2\sqrt{\varepsilon_0}$ for some $\varepsilon_0 > 0$. %
Assuming further that $\int_\Omega f(\vx) \mathrm{d}\vx = 0$ we can assert that the Poisson problem~\eqref{eq:Poisson:eps} has a unique solution.

Based on the principles of periodic homogenization and the method of matched asymptotic expansions~\cite{Delourme.Haddar.Joly:2012,Claeys.Delourme:2013,Delourme.Schmidt.Semin:2016,Semin.Schmidt:2018} we take the ansatz
\begin{align}
   u^\varepsilon(\vx) &\sim
   \begin{cases}
     U_{\mathrm{int}}\left(\vx_\Gamma(x), \frac{x}{\varepsilon} - \left\lfloor \frac{x}{\varepsilon} \right\rfloor, \frac{y}{\varepsilon}\right)  \text{ with } \vx = \boldsymbol{\phi}(x,y),
     & \operatorname{dist}(\vx, \Gamma) < 2\sqrt{\varepsilon}, \\
     u_{\mathrm{ext}}\left(\vx\right) ,
     & \operatorname{dist}(\vx, \Gamma) > \sqrt{\varepsilon},
   \end{cases}
   \label{eq:ansatz}
\end{align}
where $U_{\mathrm{int}}: \{ (\vx, X, Y), \vx \in \Gamma, (X,Y) \in \widehat{\Omega}(\vx)\} \to \mathbb{R}$ and $u_\mathrm{ext}: \Omega \backslash \Gamma \to \mathbb{R}$ describe the dominating behaviour in a vicinity and outside a vicinity of the microstructure. With ``$\sim$`` we indicate that further terms are smaller.

Since $(\vx_\Gamma(x-\varepsilon), X+1,Y)$ and $(\vx_\Gamma(x), X, Y)$ correspond to the same point $\vx$ in the vcinity of the microstructure and assuming continuity of $u^\varepsilon$ we find that
\begin{align*}
   U_{\mathrm{int}}\left(\vx_\Gamma(x-\varepsilon), \left\lfloor \frac{x}{\varepsilon} \right\rfloor, \frac{y}{\varepsilon}\right)
   \sim U_{\mathrm{int}}\left(\vx_\Gamma(x), \left\lfloor \frac{x}{\varepsilon} \right\rfloor, \frac{y}{\varepsilon}\right)\ .
\end{align*}
As $\varepsilon$ is assumed to be small we consider formally the limit $\varepsilon \to 0$ to justify that $U_\mathrm{int}$ shall be taken as periodic in $X$, i.e., for any $(\vx,Y) \in \Gamma \times \widehat{I}$ it holds
\begin{subequations}
\label{eq:system:Uint}
\begin{align}
   \lim_{X\to 0+} U_{\mathrm{int}}(\vx, X, Y)
     = \lim_{X\to 1-} U_{\mathrm{int}}(\vx, X, Y)\ .
   \label{eq:system:Uint:1}
\end{align}
Similarly, for any $(\vx,Y) \in \Gamma \times \widehat{I}$ it is reasonable to assume
\begin{align}
   \lim_{X\to 0+} \partial_X U_{\mathrm{int}}(\vx, X, Y)
     = \lim_{X\to 1-} \partial_X U_{\mathrm{int}}(\vx, X, Y)\ .
   \label{eq:system:Uint:2}
\end{align}
Inserting the ansatz~\eqref{eq:ansatz}$_1$ into~\eqref{eq:Poisson:eps} we find for $\vx = \boldsymbol{\phi}(x,y)$ that
\begin{align*}
   0 &= -\Delta u^\varepsilon(\vx) \sim -\frac{1}{\varepsilon^2} \left( \frac{\partial^2}{\partial X^2} + \frac{\partial^2}{\partial Y^2} \right) U_{\mathrm{int}}\left(\vx_\Gamma(x), \frac{x}{\varepsilon} - \left\lfloor \frac{x}{\varepsilon} \right\rfloor, \frac{y}{\varepsilon}\right)  \\
   0 &= \partial_n u^\varepsilon(\vx) \sim \frac{1}{\varepsilon} \nabla_{XY} U_{\mathrm{int}}\left(\vx_\Gamma(x), \frac{x}{\varepsilon} - \left\lfloor \frac{x}{\varepsilon} \right\rfloor, \frac{y}{\varepsilon}\right)\cdot \widehat{\vn}(\vx_\Gamma(x)) 
\end{align*}
where ''$\sim$`` indicates that further terms are smaller.
Hence, we demand for all $\vx \in \Gamma$
\begin{align}
   -\Delta_{XY} U_{\mathrm{int}}(\vx, X, Y) &= 0, \quad (X,Y) \in \widehat{\Omega}(\vx),
   \label{eq:system:Uint:3}
   \\
   \nabla_{XY} U_{\mathrm{int}}(\vx, X, Y) \cdot \widehat{\vn}(\vx) &= 0, \quad (X,Y) \in \partial\widehat{\Omega}_w(\vx).
   \label{eq:system:Uint:4}
\end{align}
\end{subequations}
Now, for $|Y| \geq R_0$ we can expand $U_{\mathrm{int}}$ in a Fourier series in $X$,
\begin{align}
   U_{\mathrm{int}}(\vx, X,Y) = \sum_{n=0}^\infty U_{\mathrm{int}, n}^\pm (\vx, Y)
   \exp(2\pi \imag n X), \quad \pm Y \geq R_0,
   \label{eq:Uint:for_Y_large}
\end{align}
and inserting~\eqref{eq:Uint:for_Y_large} into~\eqref{eq:system:Uint} and assuming non-exponential increase in $Y$ leads to
\begin{subequations}
\label{eq:Uint:behaviour_in_Y}
\begin{align}
   \label{eq:Uint:behaviour_in_Y:0}
   U_{\mathrm{int}, 0}^\pm (\vx, Y) &= U_{\mathrm{int}, 0,0}^\pm(\vx) + Y \,U_{\mathrm{int}, 0,1}^\pm(\vx), \\
   \label{eq:Uint:behaviour_in_Y:n}
   U_{\mathrm{int}, n}^\pm (\vx, Y) &= U_{\mathrm{int}, n}^\pm(\vx) \,\exp(-2\pi n |Y|)\ .
\end{align}
\end{subequations}
As for the surface homogenization we find a polynomial behaviour in $Y$ for the constant term in $X$ and an exponential decaying behaviour for all other terms.

Indeed, $U_{\mathrm{int}, 0,1}^+(\vx) = U_{\mathrm{int}, 0,1}^-(\vx)$, i.e., the slopes for $Y \to \infty$ and $Y \to -\infty$ coincide. %
To verify this, we integrate~\eqref{eq:system:Uint:3} over the truncated periodicity cell $\widehat{\Omega}_R = \widehat{\Omega} \cap (0,1) \times (-R,R)$ for some $R > R_0$. Then, applying Gauss's theorem and the periodicity condition~\eqref{eq:system:Uint:2} we obtain
\begin{align*}
   \int_0^1 \partial_Y U_\mathrm{int}(\vx, X, R) \,\text{d}X
   = \int_0^1 \partial_Y U_\mathrm{int}(\vx, X, -R) \,\text{d}X\ .
\end{align*}
Now, taking the limit for $R\to \infty$ using~\eqref{eq:Uint:behaviour_in_Y}, we find that the linear slopes are the same,
\begin{align}
   %
   U_{\mathrm{int},0,1}^+(\vx) =
   \lim_{Y \to +\infty} \partial_Y U_{\mathrm{int}}(\vx, X, Y) =
   \lim_{Y \to -\infty} \partial_Y U_{\mathrm{int}}(\vx, X, Y) =
   U_{\mathrm{int},0,1}^-(\vx) \,.
\end{align}
%
In the following we denote the linear slope and the jump and mean of the constant monomial~by
\begin{align*}
  \alpha(\vx) &:= U_{\mathrm{int}, 0,1}^+(\vx) = U_{\mathrm{int}, 0,1}^-(\vx)\ , \\
  u_\infty(\vx) &:= U_{\mathrm{int}, 0,0}^+(\vx) - U_{\mathrm{int}, 0,0}^-(\vx)\ , \\
  m_\infty(\vx) &:= \tfrac{1}{2} U_{\mathrm{int}, 0,0}^+(\vx) + \tfrac{1}{2} U_{\mathrm{int}, 0,0}^-(\vx)\ .
\end{align*}
The two representations of $u^\varepsilon$ in the ansatz~\eqref{eq:ansatz} shall differ only slightly in the two matching zones where $\sqrt{\varepsilon} < \operatorname{dist}(\vx, \Gamma) < 2 \sqrt{\varepsilon}$, i.e.,
\begin{align}
   u_\mathrm{ext}(\vx) \sim U_{\mathrm{int}}\left(\vx_\Gamma(x), \frac{x}{\varepsilon} - \left\lfloor \frac{x}{\varepsilon} \right\rfloor, \frac{y}{\varepsilon}\right), \quad \vx = \boldsymbol{\phi}(x,y),
   \sqrt{\varepsilon} < y < 2 \sqrt{\varepsilon}\ .
\end{align}
As we assumed the midline~$\Gamma$ to be $C^1$ the Taylor expansion of $u_\mathrm{ext}$
around~$\Gamma$
\begin{align*}
   u_\mathrm{ext}(\vx_\Gamma(x) + y \vn_\Gamma) %
   = u_\mathrm{ext}^\pm(\vx_\Gamma(x)) + y \partial_n u_\mathrm{ext}^\pm(\vx_\Gamma(x)) + o(|y|)
\end{align*}
holds separately for the two sides of $\Gamma$ with
 $\partial_n u_\mathrm{ext}^\pm(\vx_\Gamma(x)) = \lim_{y \to 0\pm} \nabla u_\mathrm{ext}(\vx_\Gamma(x) + y \vn_\Gamma(x))\cdot \vn_\Gamma(x)$. %

In the two matching zones the linear polynomial~\eqref{eq:Uint:behaviour_in_Y:0} is the dominating term of the near field~$U_\mathrm{int}$. Therefore, for all $\vx \in \Gamma$
it holds formally that
\begin{align}
   u_\mathrm{ext}^\pm(\vx) + y \partial_n u_\mathrm{ext}^\pm(\vx)
   \sim U_{\mathrm{int},0,0}^\pm(\vx) + \frac{y}{\varepsilon} \alpha(\vx), \quad \sqrt{\varepsilon} < y < 2 \sqrt{\varepsilon}\ .
\end{align}
Hence,
\begin{align*}
   u_\mathrm{ext}^\pm(\vx) &= U_{\mathrm{int},0,0}^\pm(\vx)\ , &
   \varepsilon\partial_n u_\mathrm{ext}^+(\vx) &= \varepsilon\partial_n u_\mathrm{ext}^-(\vx) = \alpha(\vx)\ .
\end{align*}
For the introduction of the coupled system we define $J: \mathbb{R} \to \mathbb{R}$ as a canonical jump function, cf. Fig.~\ref{Verlauf von J(Y)}, that is a smooth and odd function with $\sgn(Y)J(Y)=\frac12$ for $|Y| > R_1$ for some $R_1 > R_0$,
and vanishing in $[-R_0,R_0]$. Moreover, $[\cdot]$ and $\{\cdot\}$ denote the average and the jump of traces on the mid-line~$\Gamma$.

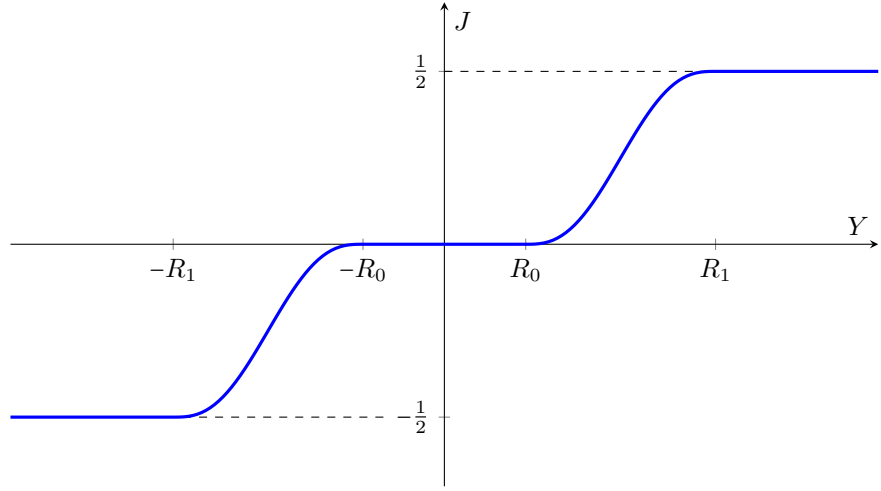
\begin{figure}[ht]
    \centering
    \begin{tikzpicture}
       \begin{axis}
       [axis lines=middle, xlabel=$Y$, ylabel=$J$,
       xmin=-8., xmax=8., ymin=-0.7, ymax = 0.7,
       xtick={-5, -1.5, 0, 1.5, 5},
       xticklabels={$-R_1$, $-R_0$, $0$, $R_0$, $R_1$},
       yticklabels={$-\frac12$, $0$, $\frac12$},
       ytick={-0.5, 0, 0.5},
       width=13cm, height=8cm
       ]
          \addplot[dashed, black] coordinates {( 0,  0.5) ( 6,  0.5) };
          \addplot[dashed, black] coordinates {(-6, -0.5) (-1, -0.5) };
          \addplot[very thick, blue, no markers] table {J.dat};
       \end{axis}
    \end{tikzpicture}

    \caption{The canonical jump function $J(Y)$.}
    \label{Verlauf von J(Y)}
\end{figure}

Altogether, we obtain the coupled system for $(u_\mathrm{ext}, U_\mathrm{int}, \alpha, u_\infty, m_\infty)$
\begin{subequations}
  \label{eq:coupled_system}
    \begin{align}
        \label{eq:coupled_system:1}
        -\Delta u_\text{ext}(\vx) &= f(\vx), \quad  \vx\in\Omega\setminus\Gamma,\\
        \label{eq:coupled_system:2}
        \partial_n u_\text{ext}(\vx) &= 0, \hspace{0.5em} \vx \in \partial\Omega,\\
        \label{eq:coupled_system:3}
        [\partial_n u_\text{ext}](\vx) &= 0, \hspace{0.5em} \vx \in \Gamma,\\
        \label{eq:coupled_system:4}
        -\Delta_{XY}U_\mathrm{int}(\vx,X,Y) &= 0, \hspace{0.5em} \vx\in \Gamma, (X,Y) \in \widehat{\Omega}(\vx),\\
        \label{eq:coupled_system:5}
        \partial_{n} U_\mathrm{int}(\vx,X,Y) &= 0, \hspace{0.5em} \vx \in \Gamma, (X,Y) \in \partial\widehat{\Omega}_w(\vx), \\
        \label{eq:coupled_system:6}
        \lim_{|Y|\to \infty} U_\mathrm{int}(\vx,X,Y) - m_\infty(\vx) - u_\infty(\vx) J(Y) - \alpha(\vx) Y &= 0, \hspace{0.5em} \vx \in \Gamma, X \in (0,1), \\[-0.7em]
        \label{eq:coupled_system:7}
        [u_\text{ext}](\vx) - u_\infty(\vx) &= 0, \hspace{0.5em} \vx \in \Gamma,\\
        \label{eq:coupled_system:8}
        \{u_\text{ext}\}(\vx) - m_\infty(\vx) &= 0, \hspace{0.5em} \vx \in \Gamma,\\
        \label{eq:coupled_system:9}
        \{\partial_n u_\text{ext}\}(\vx) - \varepsilon^{-1} \alpha(\vx)
        &= 0, \hspace{0.5em} \vx \in \Gamma\ .
    \end{align}
\end{subequations}
Here, we use $\partial_{n} U_\mathrm{int}(\vx,X,Y) := \nabla_{XY} U_\mathrm{int}(\vx,X,Y)\cdot \widehat{\vn}(\vx,X,Y)$. The equations~\eqref{eq:coupled_system:1}--\eqref{eq:coupled_system:3} form the subsystem for the macroscopic far field, the equations~\eqref{eq:coupled_system:4}--\eqref{eq:coupled_system:6} form the subsystem for the microscopic near field and~\eqref{eq:coupled_system:7}--\eqref{eq:coupled_system:9} are the coupling conditions.

Note, that the system depends on the small parameter $\varepsilon$ and different choices
of $\varepsilon$ may be considered where also the periodicity cell $\widehat{\Omega}(\mathbf{x})$ may be changed with $\varepsilon$. In this way $(u_\mathrm{ext}, U_\mathrm{int})$ can be considered as an approximation to a particular asymptotic limit for $\varepsilon \to 0$.

In Sec.~\ref{sec:near_field} we discuss the near field problem for given jump $u_\infty$ at infinity that we call near field Dirichlet problem. Then, in Sec.~\ref{sec:var_coupled} we introduce the variational formulation coupling the near and far field and show its well-posedness.

\section{Near field Dirichlet problem in one periodicity cell}
\label{sec:near_field}

In this section we consider a near field problem in one periodicity cell $\widehat{\Omega} = ((0,1) \times \mathbb{R}) \setminus \overline{\widehat{\Omega}_w}$ with $\widehat{\Omega}_w$ denoting a wall domain. Here, we omit the slow variable $\vx$. The solution $U$ satisfies the system
\begin{subequations}
\begin{align}\label{NahfeldInnen}
    -\Delta_{XY} U(X,Y) &= 0 \quad \text{in} \quad \widehat{\Omega}, \\
    \partial_n U(X,Y) &= 0 \quad \text{on} \quad \partial\widehat{\Omega}_w, \\
    U(1,Y) &= U(0,Y) \quad \text{on} \quad \mathbb{R}\setminus I_y, \\
    \partial_X U(1,Y) &= \partial_X U(0,Y) \hspace{0.2cm} \text{on} \hspace{0.2cm} \mathbb{R}\setminus I_y,\\
   \intertext{
      with the polynomial behaviour
   }
 U(X,Y) &= m_\infty + u_{\infty}J(Y) + \alpha Y + O(\exp(-2\pi|Y|)) \quad \text{for} \quad Y \to \pm \infty,
   \label{eq:NahfeldInnen:decay}
\end{align}
\end{subequations}
at infinity, where $m_\infty, u_\infty, \alpha \in \mathbb{R}$.


We consider the problem that we seek the coefficient $\alpha$ for given coefficient $u_{\infty}$, which we call the \emph{near field Dirichlet problem}. As \emph{near field Neumann problem} we would denote the problem where the linear slope $\alpha$ is given and the coefficient $u_\infty$ results. In both problems the solution is defined up to the constant $m_\infty$.

\begin{figure}
   \centering
   \resizebox{0.4\textwidth}{!}{
     \begin{tikzpicture}[scale=1]
        \begin{axis}[enlargelimits=false,xtick=\empty,ytick=\empty]
           \addplot graphics [xmin=0, xmax=1.5, ymin=0, ymax=1.5] {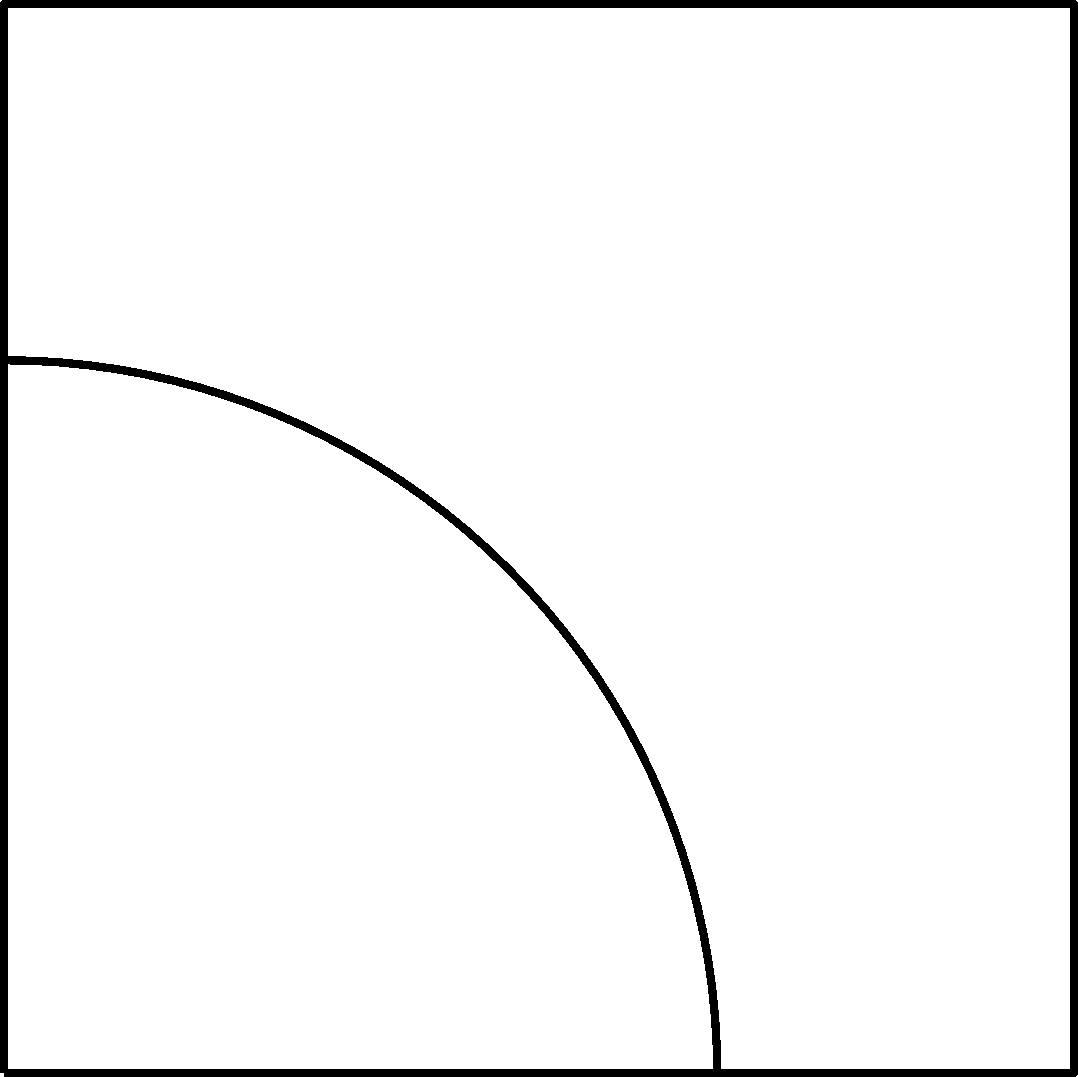};
           \addplot[no markers] (1.2,1.2) node {\Large$\Omega$};
           \addplot[no markers] (1,0.4) node {\Large$\Gamma$};
        \end{axis}
     \end{tikzpicture}}
  \caption{Illustration of the limit domain $\Omega \setminus \Gamma$ with the mid-line $\Gamma$ of the perforated wall.}
\end{figure}     

\begin{figure}
   \centering
     (a)
     \hspace{-1em}
   {\resizebox{0.2\linewidth}{!}{
  \hspace{-7em}
  \begin{tikzpicture}[scale=1]
    \begin{axis}[
    xmin=-1,xmax=1.5, ymin=-2.5, ymax=2.7,
    xtick=\empty,ytick=\empty,
                 axis equal,
                 axis line style={draw=none},
                ]
       \addplot graphics [xmin=0, xmax=1, ymin=-1, ymax=1] {Omega_hat_1.png};
       \addplot[->] coordinates {(-0.5,0) (1.5,0)};
       \addplot[->] coordinates {(-0.3,-2.5) (-0.3,2.5)};
       \addplot[no markers] ( 1.5, 0.2) node {$X$};
       \addplot[no markers] (-0.5, 2.5) node {$Y$};
       \addplot[dashed] coordinates {(0,-2.5) (0,2.5)};
       \addplot[dashed] coordinates {(1,-2.5) (1,2.5)};
       \addplot[no markers] ( 0.5,  1.6 ) node {$\widehat{\Omega}$};
       \addplot[no markers] ( 0.15,-0.2 ) node {$0$};
       \addplot[no markers] ( 1.15,-0.2 ) node {$1$};
       \addplot[no markers] (-0.4 , 0.15) node {$0$};          
       \addplot[no markers] ( 0.27, 0.75) node {$\widehat{\Omega}_w$};
    \end{axis}
  \end{tikzpicture}
  \hspace{-6em}
  }}
  \quad
  (b)
  \hspace{1em}
  {\resizebox{0.2\linewidth}{!}{
  \hspace{-7em}
  \begin{tikzpicture}[scale=1]
    \begin{axis}[
    xmin=-1,xmax=1.5, ymin=-2.5, ymax=2.7,
    xtick=\empty,ytick=\empty,
                 axis equal,
                 axis line style={draw=none},
                ]
       \addplot graphics [xmin=0, xmax=1, ymin=-1, ymax=1] {Omega_hat_1.png};
       \addplot[->] coordinates {(-0.5,0) (1.5,0)};
       \addplot[->] coordinates {(-0.3,-2.5) (-0.3,2.5)};
       \addplot[no markers] ( 1.5, 0.2) node {$X$};
       \addplot[no markers] (-0.5, 2.5) node {$Y$};
       \addplot[dashed] coordinates {(0,-2.0) (0,2.0)};
       \addplot[dashed] coordinates {(1,-2.0) (1,2.0)};
       \addplot[no markers] ( 0.5,  1.6 ) node {$\widehat{\Omega}_R$};
       \addplot[no markers] ( 0.15,-0.2 ) node {$0$};
       \addplot[no markers] ( 1.15,-0.2 ) node {$1$};
       \addplot[no markers] (-0.4 , 0.15) node {$0$};       
       \addplot[no markers] ( 0.27, 0.75) node {$\widehat{\Omega}_w$};
       \addplot[thin] coordinates {(0, 2.0) (1, 2.0)};
       \addplot[thin] coordinates {(0,-2.0) (1,-2.0)};              
       \addplot[thin] coordinates {(-0.4,-2.0) (-0.3,-2.0)};
       \addplot[thin] coordinates {(-0.4, 2.0) (-0.3, 2.0)};
       \addplot[no markers] (-0.4 , 2.0) node[left] {$R$};
       \addplot[no markers] (-0.4 ,-2.0) node[left] {$-R$}; 
    \end{axis}
  \end{tikzpicture}\hspace{-5.35em}}}
  \caption{Illustration of the 
  periodicity cell $\widehat{\Omega}$ and the truncated the periodicity cell $\widehat{\Omega}_{R}$ with $R > R_0$.}
  \label{teilgebiet}
\end{figure}
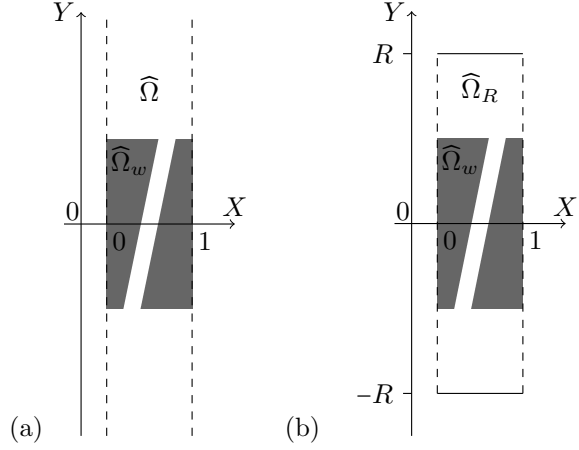

\subsection{Variational spaces}

For the variational problem of the near field Dirichlet problem we consider as unknowns the pair $(\wideparen{U}, \alpha)$ with
\begin{equation}
    \wideparen{U}(X,Y) := U(X,Y) - m_\infty - u_{\infty}J(Y) - \alpha Y,
\end{equation}
that is seeked in the Beppo-Levi space $BL_{0,\sharp}(\widehat{\Omega})$ that is the completion of the space of smooth functions with bounded support and periodicity conditions in $X$
\begin{align*}
   C^\infty_{c,\sharp}(\widehat{\Omega}) := \{ \wideparen{V} \in C^\infty_\sharp(\widehat{\Omega}), \operatorname{diam}(\supp(\wideparen{V})) < \infty \}
\end{align*}
in
\begin{align*}
   BL_\sharp(\widehat{\Omega}) := \left\{
     \wideparen{V} \in \mathcal{D}'_{\sharp}(\widehat{\Omega}) :
     \frac{\wideparen{V}}{\sqrt{1 + |Y|^2}} \in L^2(\widehat{\Omega}) \text{ and }
     \nabla \wideparen{V} \in L^2(\widehat{\Omega})
   \right\}\ .
\end{align*}
Now, we define for any subdomain $\widehat{G} \subseteq \widehat{\Omega}$ the norm
\begin{align}
  \| \wideparen{V} \|_{BL(\widehat{G})}^2
  := \int_{\widehat{G}}
  |\nabla \wideparen{V}(X,Y)|^2 +
   \frac{|\wideparen{V}(X,Y)|^2}{1 + |Y|^2}
  \text{d}(X,Y).
  \label{eq:BL:norm}
\end{align}
The Beppo-Levi spaces $BL_\sharp(\widehat{\Omega})$ and $BL_{0,\sharp}(\widehat{\Omega})$ are Hilbert spaces when endowed with the norm $\| \cdot \|_{BL(\widehat{\Omega})}$
which follows in analogy to~\cite[Chap. XI, Part B]{Dautray.Lions:1990}, see also~\cite{Fratta.Fiorenza:2022}.

For the analysis of the variational formulation in Sec.~\ref{sec:near_field:var_problem} we need the following statement.

\begin{lemma}
  \label{lem:BL}
   The seminorm $|\cdot|_{H^1(\widehat{\Omega})}$ is a norm on the space $BL_{0,\sharp}(\widehat{\Omega})$, which is equivalent to $\|\cdot\|_{BL(\widehat{\Omega})}$.
\end{lemma}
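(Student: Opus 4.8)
The claim has two parts: first, that $|\cdot|_{H^1(\widehat\Omega)}$ is actually a norm on $BL_{0,\sharp}(\widehat\Omega)$ (i.e.\ it is definite), and second, that it is equivalent to the full Beppo–Levi norm $\|\cdot\|_{BL(\widehat\Omega)}$. Since $\|\cdot\|_{BL(\widehat\Omega)}^2 = |\cdot|_{H^1(\widehat\Omega)}^2 + \|\wideparen V/\sqrt{1+|Y|^2}\|_{L^2(\widehat\Omega)}^2$, the estimate $|\wideparen V|_{H^1(\widehat\Omega)} \le \|\wideparen V\|_{BL(\widehat\Omega)}$ is trivial, and definiteness of the seminorm follows from the equivalence. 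So the whole lemma reduces to a single Poincaré-type inequality:
\begin{align}
  \left\| \frac{\wideparen V}{\sqrt{1+|Y|^2}} \right\|_{L^2(\widehat\Omega)} \le C\, |\wideparen V|_{H^1(\widehat\Omega)}
  \qquad \text{for all } \wideparen V \in BL_{0,\sharp}(\widehat\Omega).
  \label{eq:BL:goal}
\end{align}
Because $C^\infty_{c,\sharp}(\widehat\Omega)$ is dense in $BL_{0,\sharp}(\widehat\Omega)$ by definition, it suffices to prove \eqref{eq:BL:goal} for $\wideparen V \in C^\infty_{c,\sharp}(\widehat\Omega)$ with a constant independent of $\wideparen V$; the general case follows by a density/completion argument once one checks the two sides pass to the limit (the right-hand side is controlled, so the sequence is Cauchy in $H^1$-seminorm, and a subsequence of $\wideparen V/\sqrt{1+|Y|^2}$ converges weakly in $L^2$).

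The plan for the core estimate is to exploit that, for $|Y|$ large, the periodicity cell is just the full strip $(0,1)\times\{|Y|>R_0\}$ and the $X$-average of $\wideparen V$ on a horizontal slice carries no zero-order term to control it against — unlike in a bounded Poincaré inequality, here the weight $\sqrt{1+|Y|^2}$ is exactly what compensates. I would split $\wideparen V = \bar V(Y) + \widetilde V(X,Y)$ where $\bar V(Y) = \int_0^1 \wideparen V(X,Y)\,\mathrm dX$ is the slice-average (defined for $|Y|>R_0$) and $\widetilde V$ has zero $X$-average there. For the oscillatory part $\widetilde V$, a one-dimensional Poincaré inequality on the period $X\in(0,1)$ (using the periodicity in $X$) gives $\|\widetilde V(\cdot,Y)\|_{L^2(0,1)} \le C\|\partial_X \wideparen V(\cdot,Y)\|_{L^2(0,1)}$ slicewise, which after integrating in $Y$ over $\{|Y|>R_0\}$ and crudely bounding $1/(1+|Y|^2)\le 1$ absorbs into $|\wideparen V|_{H^1}$; near the obstacle region $|Y|\le R_0$ one uses a standard compactness/Poincaré argument on the bounded Lipschitz set $\widehat\Omega\cap\{|Y|<R_0+1\}$ together with the periodicity, noting that constants are killed because a nonzero constant is not in $BL_\sharp$ (its weighted $L^2$ norm diverges). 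For the average part $\bar V(Y)$, write $\bar V(Y) = \bar V(0) + \int_0^Y \bar V'(s)\,\mathrm ds$ (after extending $\bar V$ across the obstacle band in a controlled way, or starting the integration at $Y=R_0$ and separately handling the band), use $|\bar V'(s)| \le \|\partial_Y\wideparen V(\cdot,s)\|_{L^2(0,1)}$, and then a Hardy inequality on the half-lines $(R_0,\infty)$ and $(-\infty,-R_0)$ with weight $(1+|Y|^2)^{-1}$ converts $\int |\int_{R_0}^Y g|^2 (1+Y^2)^{-1}\,\mathrm dY$ into $\le C\int_{R_0}^\infty |g|^2\,\mathrm dY \le C|\wideparen V|_{H^1}^2$; the boundary term $|\bar V(R_0)|$ is controlled by the band estimate from the $\widetilde V$ step.

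The main obstacle I anticipate is the bookkeeping at the interface between the ``obstacle band'' $|Y|\le R_0$ (where $\widehat\Omega_w$ lives and the geometry is genuinely two-dimensional and only Lipschitz) and the two clean half-strips; in particular, making sure the trace $\bar V(\pm R_0)$ is controlled and that the decomposition $\wideparen V = \bar V + \widetilde V$ matches up consistently across that interface, and handling the fact that $\bar V$ need not be differentiable inside the band. The clean way to avoid most of this is to prove the Hardy inequality directly for $\wideparen V$ restricted to each half-strip $\widehat\Omega^\pm := \widehat\Omega\cap\{\pm Y>R_0\}$, anchored at the slice $Y=\pm R_0$, and then glue: $\|\wideparen V/\sqrt{1+|Y|^2}\|_{L^2(\widehat\Omega)}^2$ splits into the two half-strip pieces plus the band piece, the band piece is a bounded-domain Poincaré inequality (with the constants-excluded caveat handled via the half-strips' integrability), and each half-strip piece is Hardy plus slicewise Poincaré with the trace term on $\{Y=\pm R_0\}$ absorbed using a trace inequality on the band. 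The equivalence constant $C$ then depends only on $R_0$ and the Lipschitz character of $\widehat\Omega_w$, as expected, and definiteness of $|\cdot|_{H^1(\widehat\Omega)}$ on $BL_{0,\sharp}(\widehat\Omega)$ is an immediate consequence. I would also remark that the argument follows the classical pattern for Beppo–Levi/Deny–Lions spaces, citing \cite{Dautray.Lions:1990} and \cite{Fratta.Fiorenza:2022} for the analogous statements.
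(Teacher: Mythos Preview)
Your reduction to the weighted Poincar\'e inequality matches the paper's proof, which simply asserts that inequality by analogy with \cite[Chap.~XI, Part~B, Theorem~1]{Dautray.Lions:1990}. However, your detailed sketch contains a genuine gap. The claim that ``a nonzero constant is not in $BL_\sharp$ (its weighted $L^2$ norm diverges)'' is false: since $\int_{\widehat\Omega}(1+Y^2)^{-1}\,\mathrm{d}(X,Y)\le\int_{\mathbb R}(1+Y^2)^{-1}\,\mathrm{d}Y=\pi<\infty$, every constant lies in $BL_\sharp(\widehat\Omega)$. Worse, the cutoffs $\phi_n(X,Y)=\chi_n(Y)$ with $\chi_n\in C^\infty_c(\mathbb R)$, $\chi_n\equiv 1$ on $[-n,n]$, $\supp\chi_n\subset[-2n,2n]$, belong to $C^\infty_{c,\sharp}(\widehat\Omega)$ and satisfy $\|\phi_n-1\|_{BL(\widehat\Omega)}\to 0$, so $1\in BL_{0,\sharp}(\widehat\Omega)$ as well. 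The same sequence has $|\phi_n|_{H^1(\widehat\Omega)}^2=O(1/n)\to 0$ while $\|\phi_n\|_{BL(\widehat\Omega)}^2\ge 2(\arctan n-\arctan R_0)$ stays bounded away from zero, so the Poincar\'e inequality you aim for cannot hold with a uniform constant on $C^\infty_{c,\sharp}(\widehat\Omega)$.

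In your argument the breakdown is precisely at ``the boundary term $|\bar V(R_0)|$ is controlled by the band estimate'': any Poincar\'e inequality on the bounded band $\widehat\Omega\cap\{|Y|<R_0+1\}$ leaves an additive mean-value term that cannot be absorbed into $|\wideparen V|_{H^1(\widehat\Omega)}$ alone, and your Hardy step on the half-strips only controls $\bar V(Y)-\bar V(R_0)$, not $\bar V(R_0)$ itself. The Dautray--Lions theorem concerns $\mathbb R^n$ or exterior domains with $n\ge 2$, where the corresponding weight is \emph{not} integrable and constants are genuinely excluded from the weighted space; the strip is effectively one-dimensional at infinity, and the integrability of $(1+Y^2)^{-1}$ on $\mathbb R$ is exactly what makes the analogy fail. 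The statement as written therefore appears to need either a non-integrable weight in the definition of $\|\cdot\|_{BL(\widehat\Omega)}$ or an explicit normalisation excluding constants from $BL_{0,\sharp}(\widehat\Omega)$; your sketch does not supply a way around this, and the paper's one-line citation does not either.
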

\begin{proof}
   The statement follows from the Poincar\'e inequality
   \begin{align*}
      \int_{\widehat{\Omega}} \frac{|\widehat{V}(X,Y)|^2}{1 + |Y|^2} d(X,Y) \leq C_p
      \int_{\widehat{\Omega}}
  |\nabla \wideparen{V}(X,Y)|^2
  \text{d}(X,Y),
   \end{align*}
   which follows similarly to~\cite[Chap. XI, Part B, Theorem~1]{Dautray.Lions:1990}.
\end{proof}

To see that the functions in the Beppo-Levi--space $BL_{0,\sharp}(\widehat{\Omega})$ decay to zero for $|Y| \to \infty$ we show the statement for the homogeneous Sobolev space $\dot{H}^1_{\sharp}(\widehat{\Omega})$ which is identical to $BL_{0,\sharp}(\widehat{\Omega})$.

\begin{lemma}
   \label{lem:dotH1sharp}
   The space $\dot{H}^1_{\sharp}(\widehat{\Omega})$ as the completion of $C^\infty_c(\widehat{\Omega})$ with periodicity condition in $X$ with respect to the $H^1(\widehat{\Omega})$-seminorm is an Hilbert space with
   \begin{align}
     \int_0^1 \wideparen{V}^2(X,Y) \,\text{d}X \to 0 \quad \text{for } |Y|\to \infty
     \label{eq:dotH1sharp:1}
   \end{align}
   for all $\wideparen{V} \in \dot{H}^1_{\sharp}(\widehat{\Omega})$, 
   that is identical to $BL_{0,\sharp}(\widehat{\Omega})$.
\end{lemma}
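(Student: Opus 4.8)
The plan is to establish first the identification $\dot{H}^1_{\sharp}(\widehat{\Omega}) = BL_{0,\sharp}(\widehat{\Omega})$ together with the Hilbert space property, and then the decay property~\eqref{eq:dotH1sharp:1}, the latter being the substantial part. The identification follows at once from Lemma~\ref{lem:BL}: that lemma asserts that $|\cdot|_{H^1(\widehat{\Omega})}$ is a norm on $BL_{0,\sharp}(\widehat{\Omega})$ equivalent to $\|\cdot\|_{BL(\widehat{\Omega})}$; hence $BL_{0,\sharp}(\widehat{\Omega})$, being complete for $\|\cdot\|_{BL(\widehat{\Omega})}$, is also complete for the equivalent norm $|\cdot|_{H^1(\widehat{\Omega})}$ and still contains $C^\infty_{c,\sharp}(\widehat{\Omega})$ as a dense subspace, so by uniqueness of the completion it coincides with $\dot{H}^1_{\sharp}(\widehat{\Omega})$. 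Since $|\cdot|_{H^1(\widehat{\Omega})}$ is — again by Lemma~\ref{lem:BL} — a genuine norm on $C^\infty_{c,\sharp}(\widehat{\Omega})$ induced by the scalar product $(\wideparen V,\wideparen W)\mapsto\int_{\widehat{\Omega}}\nabla\wideparen V\cdot\nabla\wideparen W\,\mathrm{d}(X,Y)$, its completion $\dot{H}^1_{\sharp}(\widehat{\Omega})$ is a Hilbert space.

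For~\eqref{eq:dotH1sharp:1} it suffices by symmetry to treat $Y\to+\infty$. In $\{Y>R_0\}$ the cell $\widehat{\Omega}$ coincides with the flat half-strip $(0,1)\times(R_0,\infty)$, so every $\wideparen V\in\dot{H}^1_{\sharp}(\widehat{\Omega})$ admits there the Fourier expansion $\wideparen V(X,Y)=\sum_{n\in\mathbb Z}c_n(Y)\exp(2\pi\imag nX)$ with $c_n(Y)=\int_0^1\wideparen V(X,Y)\exp(-2\pi\imag nX)\,\mathrm{d}X$, and by Parseval the claim~\eqref{eq:dotH1sharp:1} reads $\sum_{n\in\mathbb Z}|c_n(Y)|^2\to0$. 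Membership of $\wideparen V$ in $\dot{H}^1_{\sharp}(\widehat{\Omega})=BL_{0,\sharp}(\widehat{\Omega})$ makes both $\sum_{n\in\mathbb Z}\int_{R_0}^\infty(|c_n'(Y)|^2+4\pi^2n^2|c_n(Y)|^2)\,\mathrm{d}Y$ and $\int_{R_0}^\infty(1+Y^2)^{-1}\sum_{n\in\mathbb Z}|c_n(Y)|^2\,\mathrm{d}Y$ finite. For $n\neq0$ this yields $c_n\in H^1(R_0,\infty)$, so $c_n(Y)\to0$, and by the fundamental theorem of calculus and Young's inequality
\begin{align*}
   |c_n(Y)|^2 = -\int_Y^\infty\frac{\mathrm{d}}{\mathrm{d}t}|c_n(t)|^2\,\mathrm{d}t \le \frac{1}{2\pi|n|}\int_Y^\infty\bigl(4\pi^2n^2|c_n(t)|^2+|c_n'(t)|^2\bigr)\,\mathrm{d}t .
\end{align*}
Summing over $n\neq0$ and using $(2\pi|n|)^{-1}\le(2\pi)^{-1}$, the right-hand side is at most $(2\pi)^{-1}\int_{(0,1)\times(Y,\infty)}|\nabla\wideparen V|^2\,\mathrm{d}(X,Y)$, which tends to $0$ as $Y\to\infty$ because $\nabla\wideparen V\in L^2(\widehat{\Omega})$; thus $\sum_{n\neq0}|c_n(Y)|^2\to0$.

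The remaining contribution, the zero mode $c_0(Y)=\int_0^1\wideparen V(X,Y)\,\mathrm{d}X$, is the main obstacle, because $c_0'\in L^2(R_0,\infty)$ together with $c_0(1+Y^2)^{-1/2}\in L^2(R_0,\infty)$ does not by itself force $c_0(Y)\to0$. Here I would combine the density of $C^\infty_{c,\sharp}(\widehat{\Omega})$ in $BL_{0,\sharp}(\widehat{\Omega})$ with the weighted Poincar\'e inequality used in the proof of Lemma~\ref{lem:BL}: for $\delta>0$ choose $\wideparen V_\delta\in C^\infty_{c,\sharp}(\widehat{\Omega})$ with $\|\wideparen V-\wideparen V_\delta\|_{BL(\widehat{\Omega})}<\delta$; beyond the bounded support of $\wideparen V_\delta$ the function $c_0$ equals the zero mode $w_\delta$ of $\wideparen V-\wideparen V_\delta$, which obeys $\int_{R_0}^\infty|w_\delta'|^2\,\mathrm{d}Y\le\delta^2$ and $\int_{R_0}^\infty(1+Y^2)^{-1}|w_\delta|^2\,\mathrm{d}Y\le\delta^2$. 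On a dyadic interval $[Y,2Y]$ a Cauchy--Schwarz estimate together with the second bound controls the average of $w_\delta$, while the first bound controls its oscillation there; combining the two and letting $\delta\to0$ along a sequence adapted to $Y\to\infty$ gives $c_0(Y)\to0$. The delicate part of this step, and the one I expect to be the hardest, is to absorb the factor $\sqrt{Y}$ that appears in the naive per-slice bound, for which the interplay of the two terms in $\|\cdot\|_{BL(\widehat{\Omega})}$ — equivalently, the weighted Poincar\'e inequality — has to be exploited quantitatively. Together with the estimate for the non-zero modes this yields $\sum_{n\in\mathbb Z}|c_n(Y)|^2\to0$, which is~\eqref{eq:dotH1sharp:1}.
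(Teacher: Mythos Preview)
Your reordering—deducing the identification $\dot H^1_\sharp(\widehat\Omega)=BL_{0,\sharp}(\widehat\Omega)$ and the Hilbert-space property first from Lemma~\ref{lem:BL}, and only then turning to the decay~\eqref{eq:dotH1sharp:1}—is sound and arguably cleaner than the paper's order. The treatment of the non-zero Fourier modes is also correct.

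The gap is in the zero mode $c_0$. Your dyadic sketch yields only $|c_0(Y)|\lesssim\sqrt{Y}\,\delta$ for $Y$ beyond the support of $\wideparen V_\delta$, and that support is not controlled as $\delta\to0$, so ``choosing $\delta$ adapted to $Y$'' does not close the argument. More importantly, the hope that the interplay of the two $BL$-terms absorbs the $\sqrt{Y}$ is misplaced: the function $c_0(Y)=\sin(\log Y)$ satisfies both $c_0'\in L^2(R_0,\infty)$ and $(1+Y^2)^{-1/2}c_0\in L^2(R_0,\infty)$, yet does not decay. What forces $c_0(Y)\to0$ is not the weighted Poincar\'e inequality but the approximability by compactly supported functions—the ``$0$'' in $BL_{0,\sharp}$—and your sketch does not extract this in a quantitatively usable way.

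The paper bypasses the Fourier splitting entirely. It works directly with the scalar quantity $\wideparen{\wideparen V}(Y):=\bigl(\int_0^1\wideparen V^2(X,Y)\,\mathrm dX\bigr)^{1/2}$, the very object in~\eqref{eq:dotH1sharp:1}, and observes via Cauchy--Schwarz that $|\wideparen{\wideparen V}{}'(Y)|\le\|\partial_Y\wideparen V(\cdot,Y)\|_{L^2(0,1)}$, whence $|\wideparen{\wideparen V}|_{H^1((-\infty,-R_0)\cup(R_0,\infty))}\le|\wideparen V|_{H^1(\widehat\Omega)}$. After a linear extension across $[-R_0,R_0]$ this places $\wideparen{\wideparen V}$ in the one-dimensional homogeneous space $\dot H^1(\mathbb R)$, whose elements decay at infinity. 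Since $|c_0(Y)|\le\wideparen{\wideparen V}(Y)$, this single one-dimensional reduction already handles your zero mode (and all the others at once) without any dyadic bookkeeping or $\sqrt{Y}$ to absorb.
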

\begin{proof}
  First, we show the decaying property~\eqref{eq:dotH1sharp:1}. For this, let for an arbitrary $\wideparen{V} \in \dot{H}^1_{\sharp}(\widehat{\Omega})$ and any $Y$ with $|Y| \geq R_0$ be
  \begin{align*}
     \wideparen{\wideparen{V}}(Y) := \sqrt{\int_0^1 \wideparen{V}^2(X,Y) \,\text{d}X}\ .
  \end{align*}
  By the Cauchy-Schwarz inequality we can assert that
     $\left| \wideparen{\wideparen{V}}'(Y) \right|
     \leq \sqrt{\int_0^1 (\partial_Y \wideparen{V}(X,Y))^2 \,\text{d}X}$
  and so
  \begin{align*}
     \left| \wideparen{\wideparen{V}} \right|_{H^1((-\infty, -R_0) \cup (R_0, \infty))} 
     \leq \left|\wideparen{V} \right|_{H^1(\widehat{\Omega})}
  \end{align*}
  Hence, the continous extension $\wideparen{\wideparen{V}}$ onto $\mathbb{R}$ by linear polynomial into $[-R_0, R_0]$ is the homogeneous Sobolev space $\dot{H}^1(\mathbb{R})$~\cite{EvansBook}, the completion of $C^\infty_c(\mathbb{R})$ with respect to the $H^1(\mathbb{R})$ seminorm. It is well-known that the homogeneous Sobolev space $\dot{H}^1(\mathbb{R})$ with a decaying behavior towards $\pm \infty$.
  This implies~\eqref{eq:dotH1sharp:1}.

  It suffices to show definitness of the $H^1(\widehat{\Omega})$-seminorm on  
  $\dot{H}^1_{\sharp}(\widehat{\Omega})$. For this let $\wideparen{V} \in \dot{H}^1_{\sharp}(\widehat{\Omega})$ be a function with $|\wideparen{V}|_{H^1(\widehat{\Omega})} = 0$. Then, $\nabla \wideparen{V} = 0$ and $\wideparen{V}$ is a constant. Finally, ~\eqref{eq:dotH1sharp:1} implies $\wideparen{V} = 0$, and the definiteness of the $H^1(\widehat{\Omega})$-seminorm follows.
  
    Since the $H^1(\widehat{\Omega})$-seminorm is a norm on the two complete spaces $BL_{0,\sharp}(\widehat{\Omega})$ and $\dot{H}^1_{\sharp}(\widehat{\Omega})$ they are identical which finishes the proof.
\end{proof}

\subsection{Variational problem}
\label{sec:near_field:var_problem}

The unknown $\wideparen{U}$ satisfies
\begin{subequations}
  \label{U:paren}
\begin{align}
    -\Delta \wideparen{U}(X,Y) &= -\Delta U(X,Y) + \alpha \Delta Y + u_{\infty}J''(Y) =  u_{\infty}J''(Y) \quad \text{in} \quad \widehat{\Omega},\label{U:paren:Delta}\\
    \partial_n \wideparen{U}(X,Y) &= \partial_n U(X,Y) -\alpha\partial_n Y - u_{\infty}J'(Y)\widehat{n}_2
    = -\alpha \widehat{n}_2\quad \hspace{1.4em}\text{on} \quad \partial \widehat{\Omega},
\end{align}
\end{subequations}
since $J'(Y) \widehat{n}_2 = 0$ on $\partial\widehat{\Omega}$.
Multiplying \eqref{U:paren:Delta} with a test function $\wideparen{V} \in BL_{0,\sharp}(\widehat{\Omega})$, integrating over $\widehat{\Omega}$ and using integration by parts we find the equality
\begin{multline}
     \int_{\widehat{\Omega}}\nabla \wideparen{U}(X,Y) \cdot \nabla \wideparen{V}(X,Y) \text{d}(X,Y) +  \alpha  \int_{\partial\widehat{\Omega}}  \widehat{n}_2 \wideparen{V}(X,Y) \text{d}_{XY}\sigma 
    = u_{\infty} \int_{\widehat{\Omega}} J''(Y)\wideparen{V}(X,Y) \text{d}(X,Y).
\end{multline}
To derive a second equation for $\wideparen{U}$ we first consider the truncated periodicity cell $\widehat{\Omega}_R := \widehat{\Omega} \cap (0,1) \times (-R,R)$ for $R > R_0$. Applying Green's formula twice
and using $\Delta Y = 0$ we can assert that
\begin{align*}
    \int_{\partial\widehat{\Omega}} &\wideparen{U}(X,Y) \widehat{n}_2 \text{d}_{XY}\sigma =
    \int_{\partial\widehat{\Omega}_R} \wideparen{U}(X,Y)\nabla Y \cdot \mathbf{n} \, \text{d}_{XY}\sigma
    - \int_0^1 \wideparen{U}(X,R) - \wideparen{U}(X,-R) \text{d}X
    \\
    &= \int_{\widehat{\Omega}_R} \nabla \wideparen{U}(X,Y) \cdot \nabla Y  \text{d}(X,Y)
    - \int_0^1 \wideparen{U}(X,R) - \wideparen{U}(X,-R) \text{d}X \\
    &=  - \int_{\widehat{\Omega}_R} Y\Delta \wideparen{U}(X,Y) \text{d}(X,Y)
        + \int_{\partial\widehat{\Omega}} Y \partial_n \wideparen{U}(X,Y) \text{d}_{XY}\sigma \\
    &\hspace{1em}
    + \int_0^1 R\partial_Y \wideparen{U}(X,R) \text{d}X - R\partial_Y \wideparen{U}(X,-R) \text{d}X
    - \int_0^1 \wideparen{U}(X,R) - \wideparen{U}(X,-R) \text{d}X\ .
\end{align*}
Taking the limit for $R \to \infty$ the last two integrals vanish due to the exponential decay of $\wideparen{U}$, see~\eqref{eq:NahfeldInnen:decay}, and inserting~\eqref{U:paren} we find
\begin{align*}
    \int_{\partial\widehat{\Omega}} \wideparen{U}(X,Y) \widehat{n}_2 \text{d}_{XY}\sigma &=
    u_{\infty}\int_{\widehat{\Omega}} Y J''(Y)  \text{d}(X,Y) - \alpha\int_{\partial\widehat{\Omega}} Y  \widehat{n}_2 \text{d}_{XY}\sigma.
\end{align*}
Integrating the first integral on the right hand side by parts twice and using the smoothness of $J$ we can assert that
\begin{align*}
   \int_{\widehat{\Omega}} Y J''(Y) \, \text{d}(X,Y) = -1\ .
\end{align*}
Moreover, the last integral can be simplified using the Green's formula inside the wall~$\widehat{\Omega}_w$,
\begin{align*}
    &\int_{\partial\widehat{\Omega}} Y  \widehat{n}_2 \text{d}_{XY}\sigma = \int_{\partial\widehat{\Omega}} Y  \nabla Y \cdot \mathbf{n} \text{d}_{XY}\sigma
    = -\int_{\widehat{\Omega}_w} Y  \Delta Y + \nabla Y \cdot\nabla Y \text{d}(X,Y) = -\vert\widehat{\Omega}_w\vert < 0,
\end{align*}
where the sign is changed as the normal vector $\mathbf{n}$ is directed inside $\widehat{\Omega}_w$.

Hence, we seek $(\wideparen{U}, \alpha) \in BL_{0,\sharp}(\widehat{\Omega}) \times \mathbb{R}$ such that
\begin{subequations}
\label{eq:var:Uparen_alpha}
\begin{align}
    \int_{\widehat{\Omega}}\nabla \wideparen{U}\cdot\nabla \wideparen{V} \text{d}(X,Y) +  \alpha  \int_{\partial\widehat{\Omega}}  \widehat{n}_2 \wideparen{V} \text{d}_{XY}\sigma
    &= u_{\infty} \int_{\widehat{\Omega}} J'' \wideparen{V} \text{d}(X,Y) \quad \forall \wideparen{V}\in BL_{0,\sharp}(\widehat{\Omega}), \label{eq:var:Uparen_alpha:1}\\
   -\int_{\partial \widehat{\Omega}} \wideparen{U} \widehat{n}_2 \text{d}_{XY}\sigma +\alpha \vert \widehat{\Omega}_w \vert
   &= u_{\infty}\ .
   \label{eq:var:Uparen_alpha:2}
\end{align}
\end{subequations}

\begin{lemma}
  \label{lem:var:Uparen_alpha}
   The variational formulation~\eqref{eq:var:Uparen_alpha} admits a unique solution $(\wideparen{U},\alpha) \in BL_{0,\sharp}(\widehat{\Omega}) \times \mathbb{R}$ and there exists a constant $C$ such that
 \begin{equation}
     \vert \wideparen{U} \vert_{H^1(\widehat{\Omega})} + \vert \alpha \vert \leq C \vert u_{\infty} \vert .\label{eq:stability:Uparen_alpha}
 \end{equation}
\end{lemma}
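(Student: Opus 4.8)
The plan is to recast the two equations \eqref{eq:var:Uparen_alpha:1}--\eqref{eq:var:Uparen_alpha:2} as a single coercive variational problem on the product Hilbert space $H := BL_{0,\sharp}(\widehat{\Omega}) \times \mathbb{R}$, which by Lemma~\ref{lem:BL} is a Hilbert space for the norm $\|(\wideparen{V},\beta)\|_H := \big(|\wideparen{V}|_{H^1(\widehat{\Omega})}^2 + \beta^2\big)^{1/2}$, equivalent to $\big(\|\wideparen{V}\|_{BL(\widehat{\Omega})}^2+\beta^2\big)^{1/2}$. Adding \eqref{eq:var:Uparen_alpha:1} to $\beta$ times \eqref{eq:var:Uparen_alpha:2} produces the bilinear form
\begin{align*}
   a\big((\wideparen{U},\alpha),(\wideparen{V},\beta)\big)
   := &\int_{\widehat{\Omega}}\nabla\wideparen{U}\cdot\nabla\wideparen{V}\,\text{d}(X,Y)
    + \alpha\int_{\partial\widehat{\Omega}}\widehat{n}_2\wideparen{V}\,\text{d}_{XY}\sigma \\
    &- \beta\int_{\partial\widehat{\Omega}}\widehat{n}_2\wideparen{U}\,\text{d}_{XY}\sigma
    + \alpha\beta\,|\widehat{\Omega}_w|
\end{align*}
and the right-hand side $\ell(\wideparen{V},\beta) := u_{\infty}\big(\int_{\widehat{\Omega}}J''\wideparen{V}\,\text{d}(X,Y) + \beta\big)$. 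The assertion then follows from the Lax--Milgram theorem for (non-symmetric) coercive forms, once $\ell$ is shown to be continuous on $H$ and $a$ continuous and coercive on $H$; the a priori bound \eqref{eq:stability:Uparen_alpha} drops out of the coercivity constant together with the linearity of $\ell$ in $u_{\infty}$.

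The only nonroutine ingredient is a trace estimate adapted to the unbounded cell. Since the outward normal on the lateral boundary $\{X=0\}\cup\{X=1\}$ of $\widehat{\Omega}$ is horizontal, $\widehat{n}_2=0$ there, so $\int_{\partial\widehat{\Omega}}\widehat{n}_2\wideparen{V}\,\text{d}_{XY}\sigma = \int_{\partial\widehat{\Omega}_w}\widehat{n}_2\wideparen{V}\,\text{d}_{XY}\sigma$, an integral over the \emph{compact} Lipschitz boundary $\partial\widehat{\Omega}_w\subset(0,1)\times[-R_0,R_0]$. On the bounded Lipschitz domain $\widehat{\Omega}_{R_0+1}=\widehat{\Omega}\cap\big((0,1)\times(-R_0-1,R_0+1)\big)$ the ordinary trace theorem together with the boundedness of $1+|Y|^2$ there gives $\|\wideparen{V}\|_{L^2(\partial\widehat{\Omega}_w)} \le C\|\wideparen{V}\|_{H^1(\widehat{\Omega}_{R_0+1})} \le C\|\wideparen{V}\|_{BL(\widehat{\Omega})} \le C|\wideparen{V}|_{H^1(\widehat{\Omega})}$, the last step by Lemma~\ref{lem:BL}. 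With Cauchy--Schwarz this yields $\big|\int_{\partial\widehat{\Omega}}\widehat{n}_2\wideparen{V}\,\text{d}_{XY}\sigma\big| \le C|\wideparen{V}|_{H^1(\widehat{\Omega})}$, and, since $J''$ is supported in $\{R_0\le|Y|\le R_1\}$, the same localisation gives $\big|\int_{\widehat{\Omega}}J''\wideparen{V}\,\text{d}(X,Y)\big| \le C|\wideparen{V}|_{H^1(\widehat{\Omega})}$. Continuity of $a$ and of $\ell$ (with $\|\ell\|_{H'}\le C|u_{\infty}|$) follow at once.

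For coercivity one tests $a$ on the diagonal: the two boundary integrals cancel, leaving $a\big((\wideparen{U},\alpha),(\wideparen{U},\alpha)\big) = |\wideparen{U}|_{H^1(\widehat{\Omega})}^2 + |\widehat{\Omega}_w|\,\alpha^2 \ge \min(1,|\widehat{\Omega}_w|)\,\|(\wideparen{U},\alpha)\|_H^2$, using $|\widehat{\Omega}_w|>0$ and again Lemma~\ref{lem:BL}. Lax--Milgram then gives the unique solution $(\wideparen{U},\alpha)\in H$ with $\|(\wideparen{U},\alpha)\|_H \le \min(1,|\widehat{\Omega}_w|)^{-1}\|\ell\|_{H'} \le C|u_{\infty}|$, which is \eqref{eq:stability:Uparen_alpha}. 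I expect the main obstacle to be exactly this trace step: one must exploit the vanishing of $\widehat{n}_2$ on the periodic boundary to reduce the trace to a compact piece of $\partial\widehat{\Omega}$, and then trade the local $H^1$-norm for the global $H^1$-seminorm through the Beppo-Levi/Poincar\'e structure of Lemma~\ref{lem:BL}. Note also that $a$ is genuinely non-symmetric (the two boundary integrals carry opposite signs), so the non-symmetric version of Lax--Milgram is required, though this does not affect the coercivity computation, where those antisymmetric terms drop out.
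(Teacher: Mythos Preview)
Your proof is correct and follows essentially the same route as the paper: recast \eqref{eq:var:Uparen_alpha} as a single problem on $BL_{0,\sharp}(\widehat{\Omega})\times\mathbb{R}$ with the bilinear form $a$ and linear form $\ell$, observe that the antisymmetric boundary terms cancel on the diagonal to give coercivity with constant $\min(1,|\widehat{\Omega}_w|)$, and conclude by Lax--Milgram. Your treatment of continuity --- reducing the boundary integral to the compact piece $\partial\widehat{\Omega}_w$ via $\widehat{n}_2=0$ on the lateral boundary and then invoking a local trace estimate plus Lemma~\ref{lem:BL} --- is more explicit than the paper, which simply asserts continuity without detail.
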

\begin{proof}
With the bilinear form $\mathsf{a}$ given by
\begin{align*}
   \mathsf{a}((\wideparen{U},\alpha),(\wideparen{V},\beta)) &= \int_{\widehat{\Omega}}\nabla \wideparen{U}\cdot\nabla \wideparen{V} \text{d}(X,Y) +  \alpha  \int_{\partial\widehat{\Omega}}  \widehat{n}_2 \wideparen{V} \text{d}_{XY}\sigma -\beta \int_{\partial \widehat{\Omega}} \wideparen{U} \widehat{n}_2 \text{d}_{XY}\sigma +\alpha\beta\vert \widehat{\Omega}_w \vert
\end{align*}
and the linear form $\ell$ defined by
\begin{align*}
   \ell((\wideparen{V}, \beta)) &= u_{\infty} \left( \int_{\widehat{\Omega}} \wideparen{V} J''  \text{d}(X,Y) + \beta \right),
\end{align*}
that are both continuous on $(BL_{0,\sharp}(\widehat{\Omega}),\mathbb{R})$, the variational formulation~\eqref{eq:var:Uparen_alpha} is equivalent to seek $(\wideparen{U},\alpha) \in BL_{0,\sharp}(\widehat{\Omega}) \times \mathbb{R}$ such that
\begin{align*}
  \mathsf{a}((\wideparen{U}, \alpha), (\wideparen{V},\beta)) = \ell((\wideparen{V},\beta) \quad \text{for all } (\wideparen{V},\beta) \in BL_{0,\sharp}(\widehat{\Omega}) \times \mathbb{R}.
\end{align*}
Taking $(\wideparen{V},\beta) = (\wideparen{U},\alpha)$ we can assert that
\begin{align*}
    a((\wideparen{U},\alpha),(\wideparen{U},\alpha)) = \int_{\widehat{\Omega}}\vert\nabla \wideparen{U}\vert^2 \text{d}(X,Y)  +\alpha^2\vert \widehat{\Omega}_w \vert \geq \min\left(1,\vert\widehat{\Omega}_w \vert\right) \left(\vert \wideparen{U} \vert_{H^1(\widehat{\Omega})} + \vert\alpha\vert^2\right)
\end{align*}
and as $H^1(\widehat{\Omega})$-seminorm is a norm on $BL_{0,\sharp}(\widehat{\Omega})$
due to Lemma~\ref{lem:BL}
the bilinear form is $(BL_{0,\sharp}(\widehat{\Omega}),\mathbb{R})$-elliptic with ellipticity constant $\gamma =  \min(1,|\widehat{\Omega}_w\vert) $.
Hence, the Lax-Milgram lemma implies existence and uniqueness of the solution as well as its continuous dependency on $u_\infty$.
\end{proof}

\subsection{Formulation on the truncated periodicity cell}
\label{AbschneidefehlerInnen}

To be able to propose a numerical scheme we aim to truncate the periodicity cell at $|Y| = R$ for some $R > R_1$, where we search for approximate solutions with homogeneous Dirichlet boundary conditions at $|Y| = R$. %
Imposing this boundary condition is reasonable due to the exponential decay of the solution $\wideparen{U}$ of \ref{eq:var:Uparen_alpha} for $|Y| \to \infty$. %
We denote the truncated periodicity cell $\widehat{\Omega}_R = \widehat{\Omega} \cap  [0,1] \times [-R,R]$ on which we consider the Hilbert space
\begin{equation}
    BL_{0, \sharp}(\widehat{\Omega}_R) :=
    \left\{ \wideparen{V}_R \in \mathcal{D}'_\sharp(\widehat{\Omega}_R) :
    \left\|\wideparen{V}_R \right\|_{BL(\widehat{\Omega}_R)} < \infty,
    \wideparen{V}_R(\cdot,\pm R)=0\right\},
\end{equation} 
By the definiton of $BL_{0, \sharp}(\widehat{\Omega})$ as the completion of $C^\infty_{c,\sharp}(\widehat{\Omega})$
the union of $BL_{0, \sharp}(\widehat{\Omega}_R)$ for all $R > R_0$ each extended by zero for $|Y| > R$ is dense in $BL_{0, \sharp}(\widehat{\Omega})$.
Hence, in view of Lemma~\ref{lem:BL} it follows that the $H^1(\widehat{\Omega}_R)$-seminorm and $BL(\widehat{\Omega}_R)$-norm are equivalent with constants independent of $R$.

Then, we search $(\wideparen{U}_{R},\alpha_R) \in BL_{0, \sharp}(\widehat{\Omega}_R) \times \mathbb{R}$ such that for all $\wideparen{V}_R\in BL_{0, \sharp}(\widehat{\Omega}_R) $
\begin{subequations}
   \label{eq:var:UparenR_alphaR}
\begin{align}
    \int_{\widehat{\Omega}_R}\nabla \wideparen{U}_{R}\cdot \nabla \wideparen{V}_R \,\text{d}(X,Y) +  \alpha_R  \int_{\partial\widehat{\Omega}_R}  \widehat{n}_2 \wideparen{V}_R \text{d}_{XY}\sigma &= u_{\infty} \int_{\widehat{\Omega}_R} J'' \wideparen{V}_R \text{d}(X,Y)
    \\
    -\int_{\partial \widehat{\Omega}_R} \wideparen{U}_{R} \widehat{n}_2 \text{d}_{XY}\sigma +\alpha_R\vert \widehat{\Omega}_w \vert
    &= u_{\infty} \ .
\end{align}
\end{subequations}
\begin{lemma}
\label{lem:var:UparenR_alphaR}
The variational formulation~\eqref{eq:var:UparenR_alphaR} admits a unique solution $(\wideparen{U}_R,\alpha_R) \in BL_{0, \sharp}(\widehat{\Omega}_R) \times \mathbb{R}$ and there exists a constant $C$ independent of $R$ such that
\begin{equation}
    \vert \wideparen{U}_{R} \vert_{H^1(\widehat{\Omega}_R)} + \vert \alpha_R \vert \leq C \vert u_{\infty} \vert.
    \label{eq:stability:UparenR_alphaR}
\end{equation}  
\end{lemma}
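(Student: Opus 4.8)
The plan is to reproduce, almost verbatim, the Lax--Milgram argument used for Lemma~\ref{lem:var:Uparen_alpha}, now posed on $BL_{0,\sharp}(\widehat{\Omega}_R)\times\mathbb{R}$, and to track that every constant that appears is independent of $R$. Concretely, I would rewrite~\eqref{eq:var:UparenR_alphaR} as $\mathsf{a}_R((\wideparen{U}_R,\alpha_R),(\wideparen{V}_R,\beta))=\ell_R((\wideparen{V}_R,\beta))$ for all $(\wideparen{V}_R,\beta)\in BL_{0,\sharp}(\widehat{\Omega}_R)\times\mathbb{R}$, with
\[
  \mathsf{a}_R((\wideparen{U},\alpha),(\wideparen{V},\beta)) = \int_{\widehat{\Omega}_R}\nabla\wideparen{U}\cdot\nabla\wideparen{V}\,\mathrm{d}(X,Y) + \alpha\!\int_{\partial\widehat{\Omega}_R}\!\widehat{n}_2\wideparen{V}\,\mathrm{d}_{XY}\sigma - \beta\!\int_{\partial\widehat{\Omega}_R}\!\wideparen{U}\,\widehat{n}_2\,\mathrm{d}_{XY}\sigma + \alpha\beta\,|\widehat{\Omega}_w|,
\]
and $\ell_R((\wideparen{V},\beta)) = u_\infty\big(\int_{\widehat{\Omega}_R}J''\wideparen{V}\,\mathrm{d}(X,Y)+\beta\big)$, which are exactly the truncations of the forms in the proof of Lemma~\ref{lem:var:Uparen_alpha}.

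Coercivity carries over unchanged and is $R$-free: choosing $(\wideparen{V}_R,\beta)=(\wideparen{U}_R,\alpha_R)$ the two boundary terms cancel, leaving $\mathsf{a}_R((\wideparen{U}_R,\alpha_R),(\wideparen{U}_R,\alpha_R)) = |\wideparen{U}_R|_{H^1(\widehat{\Omega}_R)}^2 + |\widehat{\Omega}_w|\,\alpha_R^2 \ge \min(1,|\widehat{\Omega}_w|)\big(|\wideparen{U}_R|_{H^1(\widehat{\Omega}_R)}^2 + \alpha_R^2\big)$. By the remark preceding this lemma, the $H^1(\widehat{\Omega}_R)$-seminorm is a norm on $BL_{0,\sharp}(\widehat{\Omega}_R)$ equivalent to $\|\cdot\|_{BL(\widehat{\Omega}_R)}$ with constants independent of $R$ (this is the density of the extended-by-zero spaces together with Lemma~\ref{lem:BL}), so $\mathsf{a}_R$ is $(BL_{0,\sharp}(\widehat{\Omega}_R),\mathbb{R})$-elliptic with the same ellipticity constant $\gamma=\min(1,|\widehat{\Omega}_w|)$ as in the untruncated case.

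The one point that deserves care --- and the main obstacle --- is the $R$-independent continuity of the boundary integrals and of $\ell_R$. Here I would use that on $\partial\widehat{\Omega}_R$ the quantity $\widehat{n}_2$, being the $Y$-component of the outward unit normal, vanishes on the lateral sides $X\in\{0,1\}$, while on the top and bottom $Y=\pm R$ the function $\wideparen{V}_R$ (resp.\ $\wideparen{U}_R$) vanishes by the definition of $BL_{0,\sharp}(\widehat{\Omega}_R)$; hence $\int_{\partial\widehat{\Omega}_R}\widehat{n}_2\wideparen{V}_R\,\mathrm{d}_{XY}\sigma = \int_{\partial\widehat{\Omega}_w}\widehat{n}_2\wideparen{V}_R\,\mathrm{d}_{XY}\sigma$ over the \emph{fixed} compact set $\partial\widehat{\Omega}_w\subset[0,1]\times[-R_0,R_0]$, and likewise $J''$ is supported in the fixed set $\{R_0\le|Y|\le R_1\}$, which lies in $\{|Y|<R\}$ since $R>R_1$. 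A trace estimate near $\partial\widehat{\Omega}_w$ together with the Cauchy--Schwarz inequality --- both applied on a fixed bounded region that is contained in $\widehat{\Omega}_R$ for every admissible $R>R_1$ --- followed by the $R$-uniform norm equivalence, bound all three terms by $C\,|\wideparen{V}_R|_{H^1(\widehat{\Omega}_R)}$ (resp.\ $C\,|\wideparen{U}_R|_{H^1(\widehat{\Omega}_R)}$) with $C$ independent of $R$; the $\beta$-term of $\ell_R$ is trivially continuous. With $R$-uniform continuity and $R$-uniform coercivity in hand, the Lax--Milgram lemma yields a unique $(\wideparen{U}_R,\alpha_R)\in BL_{0,\sharp}(\widehat{\Omega}_R)\times\mathbb{R}$ and the bound $|\wideparen{U}_R|_{H^1(\widehat{\Omega}_R)}+|\alpha_R|\le \gamma^{-1}\|\ell_R\|\le C\,|u_\infty|$ with $C$ independent of $R$, which is~\eqref{eq:stability:UparenR_alphaR}.
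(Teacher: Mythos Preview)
Your proposal is correct and follows exactly the approach of the paper, which simply states that the proof is in analogy to Lemma~\ref{lem:var:Uparen_alpha} with the bilinear form now $BL_{0,\sharp}(\widehat{\Omega}_R)\times\mathbb{R}$-elliptic. In fact you supply more than the paper does: the explicit verification that the boundary integrals localize to the fixed set $\partial\widehat{\Omega}_w$ (because $\widehat{n}_2=0$ on the lateral sides and $\wideparen{V}_R=0$ at $Y=\pm R$) and that $\operatorname{supp}J''$ is fixed are precisely what is needed to make the $R$-independence of the continuity constant rigorous, which the paper leaves implicit.
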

\begin{proof}
The proof is in analogy to the one of Lemma~\ref{lem:var:Uparen_alpha} where the bilinear form is $BL_{0, \sharp}(\widehat{\Omega}_R)  \times \mathbb{R}$-elliptic in this case.
\end{proof}

The truncation causes as error that decays exponentially with $R$ as stated in

\begin{lemma}
   \label{lem:UparenR_alpha:err}
   Let $R > 2R_1$. Then, there exists a constant $C$ independent of $R$ and 
   \begin{align*}
      \vert \wideparen{U}_{R}- \wideparen{U}\vert_{H^1(\widehat{\Omega})} + \vert \alpha_R -\alpha \vert \leq C \exp(-\pi R)\ .
   \end{align*}
\end{lemma}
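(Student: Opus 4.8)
The plan is to recognise the truncated problem \eqref{eq:var:UparenR_alphaR} as a \emph{conforming Galerkin discretisation} of the full problem \eqref{eq:var:Uparen_alpha} and then combine a Céa-type quasi-optimality estimate with an explicit quasi-best approximation obtained by cutting off the exponentially decaying solution $\wideparen{U}$. Concretely, extending every $\wideparen{V}_R \in BL_{0,\sharp}(\widehat{\Omega}_R)$ by zero for $|Y| > R$, the space $V_R := BL_{0,\sharp}(\widehat{\Omega}_R) \times \mathbb{R}$ becomes a closed subspace of $V := BL_{0,\sharp}(\widehat{\Omega}) \times \mathbb{R}$, and on $V_R$ the bilinear form $\mathsf{a}$ and the linear form $\ell$ of Lemma~\ref{lem:var:Uparen_alpha} coincide with the ones appearing in \eqref{eq:var:UparenR_alphaR}: here one uses that $J''$ is supported in $\{|Y| \le R_1\}$ and $\partial\widehat{\Omega}_w \subset \{|Y| \le R_0\}$, both strictly inside $\{|Y| < R\}$, that $\widehat{n}_2 = 0$ on the periodic lateral boundaries, and that the zero trace at $|Y| = R$ annihilates the boundary contribution from $\{Y = \pm R\}$. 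Thus $(\wideparen{U}_R, \alpha_R)$ is the Galerkin projection of $(\wideparen{U}, \alpha)$ onto $V_R$, Galerkin orthogonality $\mathsf{a}\bigl((\wideparen{U} - \wideparen{U}_R, \alpha - \alpha_R), (\wideparen{V}_R, \beta_R)\bigr) = 0$ holds for all $(\wideparen{V}_R, \beta_R) \in V_R$, and since $\mathsf{a}$ is continuous on $V$ with some constant $M$ and coercive with the $R$-independent constant $\gamma = \min(1, |\widehat{\Omega}_w|)$ (both already established in the proof of Lemma~\ref{lem:var:Uparen_alpha}), the standard quasi-optimality estimate together with the norm equivalence of Lemma~\ref{lem:BL} gives
\begin{align*}
   |\wideparen{U} - \wideparen{U}_R|_{H^1(\widehat{\Omega})} + |\alpha - \alpha_R|
   \le C \inf_{(\wideparen{V}_R, \beta_R) \in V_R} \Bigl( |\wideparen{U} - \wideparen{V}_R|_{H^1(\widehat{\Omega})} + |\alpha - \beta_R| \Bigr).
\end{align*}

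Next I would bound the right-hand side by the obvious choice $\beta_R = \alpha$ and $\wideparen{V}_R = \chi_R \wideparen{U}$, where $\chi_R = \chi_R(Y)$ is a smooth cut-off equal to $1$ on $\{|Y| \le R/2\}$ and to $0$ on $\{|Y| \ge R\}$, with $|\chi_R'|$ bounded uniformly in $R$. Since $\chi_R$ depends on $Y$ only, $\chi_R \wideparen{U}$ remains periodic in $X$, has support in $\{|Y| \le R\}$ with vanishing trace at $|Y| = R$, and hence lies in $BL_{0,\sharp}(\widehat{\Omega}_R)$. The remainder $\wideparen{U} - \chi_R \wideparen{U} = (1 - \chi_R)\wideparen{U}$ is supported in $\{|Y| > R/2\}$, and the hypothesis $R > 2R_1$ places this region beyond the radius $R_1$ from which the pure polynomial behaviour holds. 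There the $X$-Fourier series of $\wideparen{U}$ (as in \eqref{eq:Uint:for_Y_large}--\eqref{eq:Uint:behaviour_in_Y}) has a vanishing constant-in-$X$ mode, while the modes $n \ge 1$ decay like $\exp(-2\pi n|Y|)$; by Parseval this yields $\int_0^1 \bigl(|\wideparen{U}|^2 + |\nabla\wideparen{U}|^2\bigr)(X,Y)\,\mathrm{d}X \le C \exp(-4\pi|Y|)$ for $|Y| \ge R_1$, with $C$ controlled through \eqref{eq:stability:Uparen_alpha}. Inserting $\nabla\bigl((1 - \chi_R)\wideparen{U}\bigr) = (1 - \chi_R)\nabla\wideparen{U} - \wideparen{U}\,\nabla\chi_R$ into the seminorm and integrating this bound over $\{|Y| > R/2\}$ gives
\begin{align*}
   |\wideparen{U} - \chi_R\wideparen{U}|_{H^1(\widehat{\Omega})}^2
   \le C \int_{R/2}^{\infty} \exp(-4\pi Y)\,\mathrm{d}Y
   \le C \exp(-2\pi R),
\end{align*}
and combining this with the quasi-optimality estimate above yields $|\wideparen{U} - \wideparen{U}_R|_{H^1(\widehat{\Omega})} + |\alpha - \alpha_R| \le C \exp(-\pi R)$, which is the claim.

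The main obstacle I expect is the second step: upgrading the pointwise decay recorded in \eqref{eq:NahfeldInnen:decay} to the $H^1$-level tail estimate $\int_{\{|Y| > s\}} |\nabla\wideparen{U}|^2 + |\wideparen{U}|^2 \le C\exp(-4\pi s)$ for $s > R_1$, i.e. controlling the gradient and the full tail with an explicit constant. This is where the Fourier expansion in $X$ (valid for $|Y| > R_0$, with the $n=0$ mode vanishing once $|Y| > R_1$) must be used carefully: one writes $\exp(-4\pi n|Y|) \le \exp\bigl(-4\pi(|Y| - R_1)\bigr)\exp(-4\pi nR_1)$ for $n \ge 1$ and recognises the resulting weighted sum of squared Fourier coefficients as a finite quantity bounded, via a trace (or interior elliptic) estimate, by $\|\wideparen{U}\|_{BL(\widehat{\Omega})}^2$ and hence by $|u_\infty|^2$. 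Everything else — the identification of the truncated scheme with a Galerkin method, the quasi-optimality estimate, and the admissibility and $X$-periodicity of $\chi_R\wideparen{U}$ — is routine and parallels the arguments already given for Lemmas~\ref{lem:var:Uparen_alpha} and \ref{lem:var:UparenR_alphaR}.
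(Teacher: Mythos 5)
Your proposal is correct and follows essentially the same route as the paper: both reduce the truncation error to a best-approximation error via a C\'ea-type quasi-optimality estimate for the $R$-independently coercive bilinear form $\mathsf{a}$, and both bound the best approximation by cutting off $\wideparen{U}$ between $|Y|=R/2$ and $|Y|=R$ and exploiting the $X$-Fourier expansion of $\wideparen{U}$ for $|Y|>R_1$ (vanishing $n=0$ mode, exponential decay of $n\ge 1$ modes, $\sum_k k|U_{k,\pm}|^2$ controlled by the $H^{1/2}$ trace). The only differences are cosmetic: you use a smooth cut-off $\chi_R$ where the paper uses the piecewise-linear interpolant $\Pi_R$ of \eqref{eq:Pi_R:def}, and you package the decay as a cross-sectional bound integrated in $Y$ whereas the paper evaluates the Fourier-series integrals directly; neither choice changes the argument or the resulting rate.
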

\begin{proof}
  The proof is divided into two parts. First the truncation error is bounded using Céa's lemma by the best approximation error which is then bounded by the error of an interpolant.

  As $BL_{0, \sharp}(\widehat{\Omega}_R)$
  extended by $0$ into $\widehat{\Omega} \setminus \widehat{\Omega}_R$ is a subspace of $BL_{0, \sharp}(\widehat{\Omega})$ and with the $BL_{0, \sharp}(\widehat{\Omega}) \times \mathbb{R}$-ellipticity of the bilinear form $\mathsf{a}$
  we can apply Céa's lemma. This leads to
  \begin{align*}
     \vert \wideparen{U}_{R}- \wideparen{U}\vert_{H^1(\widehat{\Omega})} + \vert \alpha_R -\alpha \vert \leq
     \left(1+\frac{C^2}{\gamma^2}\right)\inf_{(\wideparen{V}_R,\beta_R)\in H^1_{\pm R,\sharp}(\widehat{\Omega}) \times \mathbb{R}}\left(\vert \wideparen{V}_R- \wideparen{U}\vert_{H^1(\widehat{\Omega})}^2 + \vert \beta_R -\alpha \vert^2\right),
  \end{align*}
  where $\gamma = \min(1, |\widehat{\Omega}_w|)$ is the ellipicity constant and $C$ the continuity constant of the bilinear form. As $\beta_R$ can be chosing to equal $\alpha$ as they are real numbers this simplifies to
  \begin{align}
     \vert \wideparen{U}_{R}- \wideparen{U}\vert_{H^1(\widehat{\Omega})} + \vert \alpha_R -\alpha \vert \leq
     \left(1+\frac{C^2}{\gamma^2}\right)\inf_{\wideparen{V}_R\in H^1_{\pm R,\sharp}(\widehat{\Omega})}\vert \wideparen{V}_R- \wideparen{U}\vert_{H^1(\widehat{\Omega})}^2.
     \label{eq:Uparen:best_approximation}
  \end{align}
  Now, we define for any $\wideparen{V} \in BL_{0, \sharp}(\widehat{\Omega})$
  the interpolant $\Pi_R \wideparen{V} \in BL_{\pm R, \sharp}(\widehat{\Omega})$ where
  \begin{align}
     \left(\Pi_R \wideparen{V}\right) (X,Y) = \wideparen{V}(X,Y) \cdot \left\{\begin{array}{ll} 1, & \vert Y \vert < \frac{R}{2},\\
    2\frac{R-Y}{R}, & \frac{R}{2}<\vert Y \vert < R.\end{array}\right.,
   \label{eq:Pi_R:def}
  \end{align}
  which extended by $0$ into $\widehat{\Omega} \setminus \widehat{\Omega}_R$ is in $BL_{0, \sharp}(\widehat{\Omega})$.

  To estimate the interpolation error in the $H^1(\widehat{\Omega})$-seminorm we compute the $L^2(\widehat{\Omega})$-norms of the derivatives of the difference $\Pi_R \wideparen{U} - \wideparen{U}$. As $\Pi_R \wideparen{U} = \wideparen{U}$ in $\widehat{\Omega}_{\nicefrac{R}{2}}$ the errors consists of contributions in $\widehat{\Omega}_R \setminus \widehat{\Omega}_{\nicefrac{R}{2}}$ and in $\widehat{\Omega} \setminus \widehat{\Omega}_R$.

  As the solution $\wideparen{U} \in BL_{0,\sharp}(\widehat{\Omega})$ of~\eqref{eq:var:Uparen_alpha} satisfies
  $-\Delta \wideparen{U} = 0$ for $|Y| > R_1$
  using separation of variables we can assure that
  it admits the series representation
  \begin{align}
      \widehat{U}(X,Y) &= \sum_{k=1}^\infty U_{k,\pm} \exp(2\pi k \imag X) \exp(-2\pi k (\pm Y-R_0))
      \text{ for } \pm Y > \tfrac{R}{2} \geq R_1.
      \label{eq:wideparenU:series}
   \end{align}
   Now, with the trace theorem and Lemma~\ref{lem:var:Uparen_alpha} it holds with constants $c,C$ that
   \begin{align}
      2\pi \sum_{k=1}^\infty k |U_{k,\pm}|^2 \leq c\|\wideparen{U}\|_{H^{\nicefrac12}(\Gamma_{\pm R_0})}^2
      \leq C |u_\infty|^2\ .
      \label{eq:Uparen:H12norm}
   \end{align}
   Using $\Pi_R\wideparen{U} = 0$ in $\widehat{\Omega} \setminus \widehat{\Omega}_R$ we find
  \begin{align*}
     \|\partial_X(\Pi_R\wideparen{U} - \wideparen{U})\|^2_{L^2(\widehat{\Omega} \setminus \widehat{\Omega}_R)} 
     &= 4 \pi^2 \sum_{\pm}\sum_{k=1}^\infty k^2 |U_{k,\pm}|^2 \int_R^\infty \exp(-4\pi k(Y-R_0))\,\text{d}Y \\
     &\leq \pi \sum_{\pm}\sum_{k=1}^\infty k |U_{k,\pm}|^2 \exp(-4\pi k (R-R_0))\\
     &\leq  \exp(-4\pi (R-R_0)) \pi\sum_{\pm}\sum_{k=1}^\infty k |U_{k,\pm}|^2 \\
     &\leq C \exp(-4\pi (R-R_0)) |u_\infty|^2\ ,
 \end{align*}
  and in analogy we obtain
  \begin{align*}
     \|\partial_Y(\Pi_R\wideparen{U} - \wideparen{U})\|^2_{L^2(\widehat{\Omega} \setminus \widehat{\Omega}_R)}
     &\leq C \exp(-4\pi (R-R_0)) |u_\infty|^2\ .
  \end{align*}
  To analyse the error in $\widehat{\Omega}_R \setminus \widehat{\Omega}_{\nicefrac{R}{2}}$ we first see that
  \begin{align*}
     \Pi_R\wideparen{U}(X,Y) - \wideparen{U}(X,Y) = \left( 2\frac{R - Y}{R} - 1 \right)\wideparen{U}(X,Y)
     = \left( 1 - \frac{2Y}{R}\right) \wideparen{U}(X,Y),
  \end{align*}
  and using~\eqref{eq:wideparenU:series} we find that
  \begin{align*}
     \|\partial_X(\Pi_R\wideparen{U} - \wideparen{U})\|^2_{L^2(\widehat{\Omega}_R \setminus \widehat{\Omega}_{\nicefrac{R}{2}})}
     &= 4\pi^2 \sum_{\pm}\sum_{k=1}^\infty k^2 \vert U_{k,\pm}\vert^2 E_k(R)
  \end{align*}
  where
  \begin{align*}
     E_k(R) &= \int_{\nicefrac{R}{2}}^R \left( 1 - \frac{2Y}{R}\right)^2\exp(- 4\pi k(Y - R_0))\text{d}Y \\
            &\leq \int_{\nicefrac{R}{2}}^R \exp(- 4\pi k(Y - R_0))\text{d}Y
    \leq \frac{1}{4\pi k}\exp\left(- 2\pi k\left(R - 2R_0\right)\right)\ .
  \end{align*}
  Now, inserting the inequality~\eqref{eq:Uparen:H12norm} we can assert that
  \begin{align*}
     \|\partial_X(\Pi_R\wideparen{U} - \wideparen{U})\|^2_{L^2(\widehat{\Omega}_R \setminus \widehat{\Omega}_{\nicefrac{R}{2}})}
     \leq C \exp(-2\pi (R-2R_0)) |u_\infty|^2\ .
  \end{align*}
  For the $Y$-derivative we obtain
  \begin{align*}
     \|\partial_Y(\Pi_R\wideparen{U} - \wideparen{U})\|^2_{L^2(\widehat{\Omega}_R \setminus \widehat{\Omega}_{\nicefrac{R}{2}})} &\leq
     \frac{4}{R^2} \sum_{\pm}\sum_{k=1}^\infty\vert U_{k,\pm}\vert^2\int_{\nicefrac{R}{2}}^R  \exp(- 4\pi k(Y - R_0))\text{d}Y\nonumber\\
    &\hspace{1em}+ 4\pi^2 \sum_{k=1}^\infty k^2 \vert U_{k,\pm}\vert^2 E_k(R)\\
    &\leq \sum_{\pm}\sum_{k=1}^\infty \vert U_{k,\pm} \vert^2 \left( \frac{1}{\pi k R^2} + \pi k \right)\exp(-2 \pi k(R - 2R_0))\\
     &\leq C \exp(-2\pi (R-2R_0)) |u_\infty|^2\ ,
  \end{align*}
  where we used again~\eqref{eq:Uparen:H12norm}.

  Finally, adding all the error contributions and using~\eqref{eq:Uparen:best_approximation} with $\wideparen{V}_R = \Pi_R\wideparen{U}$ we can assert the statement of the lemma.
\end{proof}



\section{Variational formulation for near and far field}\label{sec:var_coupled}

\subsection{The variational formulation on unbounded periodicity cells}

For the variational problem of the coupled problem we consider as unknown for the near field
\begin{equation}
    \wideparen{U}_{\mathrm{int}}(\vx,X,Y) := U_{\mathrm{int}}(\vx,X,Y)
    - m_\infty(\vx) - u_{\infty}(\vx)J(Y) - \alpha(\vx) Y, \quad \vx \in \Gamma,
\end{equation}
that is seeked in the variational space
\begin{align*}
L^2(\Gamma, BL_{0,\sharp}(\widehat{\Omega})) &:= \Big\{ \wideparen{V}_{\mathrm{int}}(\vx,\cdot) \in BL_{0,\sharp}(\widehat{\Omega}(\vx)) \text{ for almost all } \vx \in \Gamma, \\
&\hspace{2em}\|\wideparen{V}_{\mathrm{int}}(\cdot,X,Y)\|_{BL(\widehat{\Omega}(\cdot))} \in L^2(\Gamma) \Big\}.
\end{align*}
This space is equipped with the norm defined by
\begin{align*}
   \|\wideparen{V}_{\mathrm{int}}\|_{L^2(\Gamma, BL(\widehat{\Omega}))}^2
   := \int_\Gamma \|\wideparen{V}_{\mathrm{int}}(\vx,X,Y)\|_{BL(\widehat{\Omega}(\vx))}^2 \mathrm{d}\sigma\ .
\end{align*}
Functions in this space have
periodicity conditions in $X$, and the domain of definition for fixed $\vx$ is the local periodicity cell $\widehat{\Omega}(\vx)$.

As the far field $u_{\mathrm{ext}}$ can only be uniquely defined up to an additive constant we seek it in an Hilbert space of vanishing mean
in the support of $f$
\begin{align*}
   H^1_*(\Omega \setminus \Gamma):=\{v_{\mathrm{ext}} \in H^1(\Omega \setminus \Gamma): \int_{\Omega_f} v_{\mathrm{ext}}\, \mathrm{d}\vx = 0\}\ ,
\end{align*}
which still allows for jumps on $\Gamma$. We equip the space with the norm defined by
\begin{align*}
    \Vert v_{\mathrm{ext}} \Vert^2_{H^1_*(\Omega\setminus\Gamma)} := \vert v_{\mathrm{ext}} \vert^2_{H^1(\Omega\setminus\Gamma)} + \Vert [v_{\mathrm{ext}}] \Vert^2_{L^2(\Gamma)}\ ,
\end{align*}
and due to term $\Vert [v_{\mathrm{ext}}] \Vert^2_{L^2(\Gamma)}$ the norm is only zero for $v_{\mathrm{ext}} = 0$.

To obtain a variational formulation we consider first~\eqref{eq:var:Uparen_alpha} where $\wideparen{U}$, $\wideparen{V}$ are replaced by $\wideparen{U}_{\mathrm{int}}$ and $\wideparen{V}_{\mathrm{int}}$, the second equation~\eqref{eq:var:Uparen_alpha:2} is multiplied with $\beta \in L^2(\Gamma)$, $\alpha$ is considered in $L^2(\Gamma)$ and both equations are multiplied with $1 / \varepsilon$ and integrated over $\Gamma$. Then, multiplying~\eqref{eq:coupled_system:1} by $v_{\mathrm{ext}} \in H^1_*(\Omega \setminus \Gamma)$, integrating over $\Omega \setminus \Gamma$ and
using $[\partial_n u_\mathrm{ext}] = 0$ by~\eqref{eq:coupled_system:3} and $\{ \partial_n u_\mathrm{ext}\} = \alpha / \varepsilon$ by~\eqref{eq:coupled_system:9} and
multiplying~\eqref{eq:coupled_system:7} by $v_\infty \in L^2(\Gamma)$ and integrating over~$\Gamma$ leads to the coupled variational formulation:
Seek $(u_{\mathrm{ext}}, \alpha, \wideparen{U}_{\mathrm{int}}, u_{\infty}) \in H^1_*(\Omega \setminus \Gamma) \times L^2(\Gamma) \times  L^2(\Gamma, BL_{0,\sharp}(\widehat{\Omega})) \times L^2(\Gamma)$ such that
\begin{align}
    \int_{\Omega \setminus \Gamma} \nabla u_{\mathrm{ext}} \cdot \nabla v_{\mathrm{ext}} \mathrm{d}\vx + \frac{1}{\varepsilon}\int_{\Gamma} \alpha[v_{\mathrm{ext}}] \mathrm{d}\sigma 
    &= \int_{\Omega \setminus \Gamma} f v_{\mathrm{ext}}
    \mathrm{d}\vx  \nonumber\\
    \frac{1}{\varepsilon}\int_{\Gamma} -[u_{\mathrm{ext}}]v_\infty + u_{\infty} v_\infty \mathrm{d}\sigma &= 0 \nonumber
    \\[-1em]
    \label{eq:coupled_system:var}
    \\
    \frac{1}{\varepsilon}\int_{\Gamma}\left(\alpha  \int_{\partial\widehat{\Omega}(\vx)}  \wideparen{V}_\mathrm{int} \widehat{n}_2\mathrm{d}_{XY}\sigma + \int_{\widehat{\Omega}(\vx)}  \nabla \wideparen{U}_{\mathrm{int}} \cdot\nabla \wideparen{V}_\mathrm{int} - u_{\infty} J'' \wideparen{V}_\mathrm{int}\mathrm{d}(X,Y) \right) \mathrm{d}\sigma &= 0 \nonumber 
    \\
   \frac{1}{\varepsilon} \int_{\Gamma} \left(\alpha \beta \vert \widehat{\Omega}_w(\vx) \vert -\beta \int_{\partial \widehat{\Omega}(\vx)} \wideparen{U}_{\mathrm{int}} \widehat{n}_2 \mathrm{d}_{XY}\sigma
   - u_{\infty} \beta \right)\mathrm{d}\sigma  &= 0 \nonumber
\end{align}
for all $(v_{\mathrm{ext}}, \beta, \wideparen{V}_{\mathrm{int}}, v_{\infty}) \in H^1_*(\Omega \setminus \Gamma) \times L^2(\Gamma) \times  L^2(\Gamma, BL_{0,\sharp}(\widehat{\Omega})) \times L^2(\Gamma)$.

To discuss the well-posedness we introduce the product space $\mathcal{W} = H^1_*(\Omega \setminus \Gamma) \times L^2(\Gamma)\times L^2(\Gamma, BL_{0,\sharp}(\widehat{\Omega})) \times L^2(\Gamma)$
with $\varepsilon$-dependent norm defined by
\begin{align}
   \big\| (v_{\mathrm{ext}},\beta, \wideparen{V}_\mathrm{int},v_{\infty}) \big\|_{\mathcal{W},\varepsilon}^2
   := \big\| v_{\mathrm{ext}} \big\|_{H^1(\Omega \setminus \Gamma)}^2
   + \frac{1}{\varepsilon}\left(
    \big\| \beta \big\|_{L^2(\Gamma)}^2
   + \|\wideparen{U}_{\mathrm{int}}\|_{L^2(\Gamma, BL(\widehat{\Omega}))}^2
   + \big\| v_\infty \big\|_{L^2(\Gamma)}^2\right)
   \label{eq:norm:Wepsilon}
\end{align}
and a related seminorm
\begin{align*}
   \big| (v_{\mathrm{ext}},\beta, \wideparen{V}_\mathrm{int},v_{\infty}) \big\|_{\mathcal{W},\varepsilon}^2
   := \big| v_{\mathrm{ext}} \big|_{H^1(\Omega \setminus \Gamma)}^2
   + \frac{1}{\varepsilon}\left(
    \big\| \beta \big\|_{L^2(\Gamma)}^2
   + \|\wideparen{U}_{\mathrm{int}}\|_{L^2(\Gamma, BL(\widehat{\Omega}))}^2
    + \big\| v_\infty \big\|_{L^2(\Gamma)}^2\right)\ .
\end{align*}
On this space we define the bilinear form
\begin{align}
    \mathsf{b}((&u_{\mathrm{ext}}, \alpha, \wideparen{U}_\mathrm{int},u_{\infty}),(v_{\mathrm{ext}},\beta, \wideparen{V}_\mathrm{int},v_{\infty})) \nonumber \\
    &:= \int_{\Omega \setminus \Gamma} \nabla u_{\mathrm{ext}} \cdot \nabla v_{\mathrm{ext}} \mathrm{d}\vx
    + \frac{1}{\varepsilon}\int_{\Gamma} \alpha[v_{\mathrm{ext}}] - [u_{\mathrm{ext}}]v_\infty + u_{\infty} v_\infty \mathrm{d}\sigma
    \label{eq:coupled_system:bf}
    \\
    &\hspace{2em} + \frac{1}{\varepsilon}\int_{\Gamma}\left(\alpha  \int_{\partial\widehat{\Omega}(\vx)} \wideparen{V}_\mathrm{int} \widehat{n}_2\mathrm{d}_{XY}\sigma + \int_{\widehat{\Omega}(\vx)}  \nabla \wideparen{U}_{\mathrm{int}} \cdot\nabla \wideparen{V}_\mathrm{int} - u_{\infty} J'' \wideparen{V}_\mathrm{int}\mathrm{d}(X,Y) \right) \mathrm{d}\sigma \nonumber \\
    &\hspace{2em} +\frac{1}{\varepsilon} \int_{\Gamma} \left(\alpha \beta \vert \widehat{\Omega}_w(\vx) \vert -\beta \int_{\partial \widehat{\Omega}(\vx)} \wideparen{U}_{\mathrm{int}} \widehat{n}_2 \mathrm{d}_{XY}\sigma
   - u_{\infty} \beta \right)\mathrm{d}\sigma\ ,
   \nonumber
\end{align}
for which we state inf-sup-conditons where the first gives only a lower bound in terms of the seminorm.
\begin{lemma}[inf-sup conditions]
Let $\|J'\|_{L^\infty(\mathbb{R})} \leq \tfrac{1}{2}$.
Then, there exists a constant $\gamma > 0$ independent of $\varepsilon$
such that for all $(u_{\mathrm{ext}}, \alpha, \wideparen{U}_\mathrm{int},u_{\infty}) \in \mathcal{W}$ it holds
\begin{subequations}
  \begin{align}
     \sup_{(v_{\mathrm{ext}},\beta, \wideparen{V}_\mathrm{int},v_{\infty}) \in \mathcal{W} \setminus \{ \boldsymbol{0} \} }
     \hspace{-2.5em}
     \frac{\left| \mathsf{b}((u_{\mathrm{ext}}, \alpha, \wideparen{U}_\mathrm{int},u_{\infty}),(v_{\mathrm{ext}},\beta, \wideparen{V}_\mathrm{int},v_{\infty})) \right|}
     {\left\| (v_{\mathrm{ext}},\beta, \wideparen{V}_\mathrm{int},v_{\infty}) \right\|_{\mathcal{W},\varepsilon}}
     &\geq \gamma \left|(u_{\mathrm{ext}}, \alpha, \wideparen{U}_\mathrm{int},u_{\infty})\right|_{\mathcal{W},\varepsilon} \ ,
     \label{eq:coupled:inf_sup:1}\\
     \intertext{and for all $(v_{\mathrm{ext}},\beta, \wideparen{V}_\mathrm{int},v_{\infty}) \in \mathcal{W} \setminus \{(0,0,0,0) \}$ it holds}
     \sup_{(u_{\mathrm{ext}},\alpha, \wideparen{U}_\mathrm{int},u_{\infty}) \in \mathcal{W} \setminus \{ \boldsymbol{0} \} }
     \hspace{-2.5em}
     \left| \mathsf{b}((u_{\mathrm{ext}}, \alpha, \wideparen{U}_\mathrm{int},u_{\infty}),(v_{\mathrm{ext}},\beta, \wideparen{V}_\mathrm{int},v_{\infty})) \right|
     &> 0 \ .
  \end{align}
\end{subequations}
\label{lem:coupled:inf_sup}
\end{lemma}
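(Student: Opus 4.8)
We sketch the argument. First one checks that every term occurring in $\mathsf b$ in~\eqref{eq:coupled_system:bf} is continuous on $\mathcal W$: for the jump one uses the trace bound $\|[v_{\mathrm{ext}}]\|_{L^2(\Gamma)}\le C\|v_{\mathrm{ext}}\|_{H^1(\Omega\setminus\Gamma)}$, for the cell boundary term $\bigl|\int_{\partial\widehat\Omega}\widehat n_2\wideparen V\bigr|\le C|\wideparen V|_{H^1(\widehat\Omega)}$, and for the forcing $\bigl|\int_{\widehat\Omega}J''\wideparen V\bigr|=\bigl|\int_{\widehat\Omega}J'\partial_Y\wideparen V\bigr|\le\|J'\|_{L^\infty}\,|\mathrm{supp}\,J'|^{1/2}\,|\wideparen V|_{H^1(\widehat\Omega)}$ (valid since $\wideparen V$ is periodic in $X$ and $J'$ vanishes near the wall and at infinity), combined with Lemma~\ref{lem:BL}. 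The first inf-sup condition is then proved by an explicit test-function construction, the second is a non-degeneracy statement obtained by testing against the four coordinate directions of $\mathcal W$.

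The key device for~\eqref{eq:coupled:inf_sup:1} is the near-field solution operator furnished by Lemma~\ref{lem:var:Uparen_alpha}: for $\vx\in\Gamma$ let $(\wideparen S u_\infty,\sigma u_\infty)\in BL_{0,\sharp}(\widehat\Omega(\vx))\times\mathbb R$ be the unique solution of~\eqref{eq:var:Uparen_alpha} with datum $u_\infty(\vx)$, bounded uniformly in $\vx$. Given a trial $w=(u_{\mathrm{ext}},\alpha,\wideparen U_{\mathrm{int}},u_\infty)$ write $\wideparen U_{\mathrm{int}}=\wideparen S u_\infty+e_{\wideparen U}$, $\alpha=\sigma u_\infty+e_\alpha$, set $g:=[u_{\mathrm{ext}}]-u_\infty$, and take the test $\tilde w=(u_{\mathrm{ext}},\,e_\alpha,\,e_{\wideparen U},\,e_\alpha-\nu g)$ with $\nu>0$ large, to be fixed. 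Using that $(\wideparen S u_\infty,\sigma u_\infty)$ solves~\eqref{eq:var:Uparen_alpha} makes the entire near-field part of $\mathsf b$ collapse to its coercive diagonal $\frac1\varepsilon\int_\Gamma\bigl(|e_{\wideparen U}|_{H^1(\widehat\Omega(\vx))}^2+|\widehat\Omega_w(\vx)|\,e_\alpha^2\bigr)$, and rearranging the remaining terms by $[u_{\mathrm{ext}}]=g+u_\infty$ gives, after a direct computation,
\[
  \mathsf b(w,\tilde w)=|u_{\mathrm{ext}}|_{H^1(\Omega\setminus\Gamma)}^2+\frac1\varepsilon\int_\Gamma\Bigl(\sigma u_\infty^2+\nu g^2+|e_{\wideparen U}|_{H^1(\widehat\Omega(\vx))}^2+|\widehat\Omega_w(\vx)|\,e_\alpha^2+\sigma u_\infty g+e_\alpha u_\infty\Bigr)\mathrm d\sigma .
\]
The two cross terms $\sigma u_\infty g$ and $e_\alpha u_\infty$ are absorbed by Young's inequality (taking $\nu$ large relative to the uniform bounds on $\sigma$ and $|\widehat\Omega_w|$), which yields $\mathsf b(w,\tilde w)\ge c_1\bigl(|w|_{\mathcal W,\varepsilon}^2+\tfrac1\varepsilon\|g\|_{L^2(\Gamma)}^2\bigr)$ with $c_1>0$ independent of $\varepsilon$. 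On the other hand $\|\tilde w\|_{\mathcal W,\varepsilon}^2\le c_2\bigl(|w|_{\mathcal W,\varepsilon}^2+\tfrac1\varepsilon\|g\|_{L^2(\Gamma)}^2\bigr)$: although $\|[u_{\mathrm{ext}}]\|_{L^2(\Gamma)}$ is \emph{not} controlled by the seminorm $|w|_{\mathcal W,\varepsilon}$, the identity $[u_{\mathrm{ext}}]=g+u_\infty$ gives $\|[u_{\mathrm{ext}}]\|_{L^2(\Gamma)}^2\le 2\|g\|^2+2\|u_\infty\|^2\le 2\varepsilon\bigl(\tfrac1\varepsilon\|g\|^2+|w|_{\mathcal W,\varepsilon}^2\bigr)$. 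Dividing, $\mathsf b(w,\tilde w)/\|\tilde w\|_{\mathcal W,\varepsilon}\ge (c_1/\sqrt{c_2})\,|w|_{\mathcal W,\varepsilon}$, i.e.~\eqref{eq:coupled:inf_sup:1}. This computation also explains why only the seminorm can appear on the right: nothing in $\mathsf b(w,\cdot)$ controls the jump $\|[u_{\mathrm{ext}}]\|_{L^2(\Gamma)}$.

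For the second condition it suffices to show that $\mathsf b(\cdot,v)\equiv0$ forces $v=(v_{\mathrm{ext}},\beta,\wideparen V_{\mathrm{int}},v_\infty)=0$. Testing against $(u_{\mathrm{ext}},0,0,0)$ gives $\int_{\Omega\setminus\Gamma}\nabla u_{\mathrm{ext}}\cdot\nabla v_{\mathrm{ext}}\,\mathrm d\vx=\frac1\varepsilon\int_\Gamma[u_{\mathrm{ext}}]v_\infty\,\mathrm d\sigma$ for all $u_{\mathrm{ext}}$, so $v_{\mathrm{ext}}$ is harmonic on $\Omega\setminus\Gamma$ with $\partial_n v_{\mathrm{ext}}=0$ on $\partial\Omega$, $[\partial_n v_{\mathrm{ext}}]=0$ and $\{\partial_n v_{\mathrm{ext}}\}=v_\infty/\varepsilon$ on $\Gamma$; testing against $(0,0,\wideparen U_{\mathrm{int}},0)$ gives, for a.e.\ $\vx$, $\wideparen V_{\mathrm{int}}(\vx,\cdot)=\beta(\vx)\,\wideparen\Psi(\vx,\cdot)$ with $\wideparen\Psi$ the decaying solution of a fixed cell Neumann problem; testing against $(0,\alpha,0,0)$ and $(0,0,0,u_\infty)$ gives the scalar relations $[v_{\mathrm{ext}}]=-\beta\,(|\widehat\Omega_w|+q)$ and $v_\infty=\beta(1+p)$, with $q:=|\wideparen\Psi|_{H^1(\widehat\Omega)}^2\ge0$ and $p:=\int_{\widehat\Omega}J''\wideparen\Psi$. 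Choosing $u_{\mathrm{ext}}=v_{\mathrm{ext}}$ in the first identity and inserting these relations yields the energy identity
\[
  0\le |v_{\mathrm{ext}}|_{H^1(\Omega\setminus\Gamma)}^2=\frac1\varepsilon\int_\Gamma[v_{\mathrm{ext}}]v_\infty\,\mathrm d\sigma=-\frac1\varepsilon\int_\Gamma\beta^2\,(|\widehat\Omega_w|+q)(1+p)\,\mathrm d\sigma .
\]
Since $1+p=\sigma(|\widehat\Omega_w|+q)$ (a rearrangement of~\eqref{eq:var:Uparen_alpha:2}) with $\sigma>0$ and $|\widehat\Omega_w|+q>0$, the right-hand side is $\le0$; hence $|v_{\mathrm{ext}}|_{H^1(\Omega\setminus\Gamma)}=0$ and $\beta=0$ a.e.\ on $\Gamma$, so $v_\infty=0$, $\wideparen V_{\mathrm{int}}=0$, and $v_{\mathrm{ext}}$ is locally constant with zero jump and vanishing mean on $\Omega_f$, whence $v_{\mathrm{ext}}=0$.

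The main obstacle is the constant bookkeeping in the second step: arranging the Young's-inequality splittings so that the remainder $c_1$ is positive and independent of $\varepsilon$. This reduces to quantitative properties of the near-field solution operator of Lemma~\ref{lem:var:Uparen_alpha} — uniform bounds $0<c_\sigma\le\sigma(\vx)\le C_\sigma$, $|\widehat\Omega_w(\vx)|\ge c_w>0$, and $|\int_{\widehat\Omega}J''\wideparen V|\le C_J|\wideparen V|_{H^1(\widehat\Omega)}$ — and it is precisely here that $\|J'\|_{L^\infty(\mathbb R)}\le\tfrac12$ is used: it keeps $C_J$ and the operator norm of $\wideparen S$ small, and through $\sigma(|\widehat\Omega_w|+q)=1+p$ keeps $\sigma$ bounded away from $0$, so that the absorptions close. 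The same positivity of $\sigma$ is the ingredient closing the second inf-sup. A secondary point is the measurability in $\vx$ of the cell-wise objects $\wideparen S u_\infty(\vx,\cdot)$, $\sigma(\vx)$, $\wideparen\Psi(\vx,\cdot)$, which follows from the local-periodicity hypothesis (a finite partition of $\Gamma$ with $\widehat\Omega_w(\vx)$ depending continuously on $\vx$).
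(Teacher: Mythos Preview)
Your approach to the second inf-sup condition (non-degeneracy by testing against coordinate directions and closing with the energy identity via $\sigma>0$) is different from the paper's but correct; the paper instead exhibits an explicit trial $(v_{\mathrm{ext}},v_\infty,2M_w\wideparen V_{\mathrm{int}},M_w(v_\infty-2\beta))$ and bounds $\mathsf b$ from below directly.

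For the first inf-sup condition, however, there is a genuine gap in the absorption step. Your computed identity
\[
\mathsf b(w,\tilde w)=|u_{\mathrm{ext}}|_{H^1}^2+\frac1\varepsilon\int_\Gamma\Bigl(\sigma u_\infty^2+\nu g^2+|e_{\wideparen U}|_{H^1}^2+|\widehat\Omega_w|\,e_\alpha^2+\sigma u_\infty g+e_\alpha u_\infty\Bigr)\mathrm d\sigma
\]
is correct, but absorbing the cross term $e_\alpha u_\infty$ by Young's inequality requires, after optimising the splitting, a quantitative bound of the type $\sigma(\vx)\,|\widehat\Omega_w(\vx)|>\tfrac14$ uniformly in $\vx$. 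This is \emph{not} implied by the hypotheses: the Dirichlet-to-Neumann constant $\sigma$ is determined solely by the geometry of $\widehat\Omega_w$ (the original near-field problem does not mention $J$), and for walls with small apertures one has $\sigma\to 0$ while $|\widehat\Omega_w|$ stays bounded. In particular your claim that $\|J'\|_{L^\infty}\le\tfrac12$ ``keeps $\sigma$ bounded away from $0$'' is false: $\sigma$ does not depend on $J$ at all. The only $u_\infty^2$-coefficient you produce is $\sigma$, which is insufficient in general.

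The paper sidesteps this entirely by avoiding the cell solution operator. It takes the elementary test
\[
(v_{\mathrm{ext}},\beta,\wideparen V_{\mathrm{int}},v_\infty)=(u_{\mathrm{ext}},\,\alpha-\tfrac{\sqrt2}{2}u_\infty,\,\wideparen U_{\mathrm{int}},\,\alpha),
\]
for which the $\alpha$–$u_\infty$ cross terms cancel by design and a coefficient $\tfrac{\sqrt2}{2}u_\infty^2$ appears directly from $-u_\infty\beta$, independent of the cell geometry. After rewriting $\int_{\partial\widehat\Omega}\wideparen U\widehat n_2=\int_{\widehat\Omega}\partial_Y\wideparen U$ and $-\int J''\wideparen U=\int J'\partial_Y\wideparen U$, a single Young split of the remaining mixed term $u_\infty\int_{\widehat\Omega}(J'+\tfrac{\sqrt2}{2})\partial_Y\wideparen U$ closes precisely under $\|J'\|_{L^\infty}\le\tfrac12$. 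This is where the hypothesis is actually used.
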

\begin{proof}
   First, integrating by parts we find that
   \begin{align*}
      \int_{\partial\widehat{\Omega}(\vx)}
      \wideparen{U}_{\mathrm{int}} \widehat{n}_2 \mathrm{d}\sigma_{XY}
      &= \int_{\widehat{\Omega}(\vx)} Y'
      \partial_Y \wideparen{U}_{\mathrm{int}} \mathrm{d}(X,Y)
      = \int_{\widehat{\Omega}(\vx)}\partial_Y \wideparen{U}_{\mathrm{int}} \mathrm{d}(X,Y),
      \\
      -\int_{\widehat{\Omega}(\vx)} J''\wideparen{U}_{\mathrm{int}} \mathrm{d}(X,Y)
      &= \int_{\widehat{\Omega}(\vx)} J' \partial_Y \wideparen{U}_{\mathrm{int}} \mathrm{d}(X,Y),
   \end{align*}
   since the boundary term $\int_{\partial\widehat{\Omega}(\vx)} J'\wideparen{U}_{\mathrm{int}} \widehat{n}_2 \mathrm{d}\sigma_{XY}$ vanishes as $J'$ is zero on the wall boundary $\partial\widehat{\Omega}_w$.
   Inserting the test function
   \begin{align*}
      (v_{\mathrm{ext}},\beta, \wideparen{V}_\mathrm{int},v_{\infty})
      &= \left(u_{\mathrm{ext}},
       \alpha - \tfrac{\sqrt{2}}{2} u_\infty, \wideparen{U}_\mathrm{int},\alpha
      \right)\ .
   \end{align*}
   into the bilinear form, the mixed terms with $\alpha$ and $u_\infty$ cancel out.
   Now, defining $m_w := \inf_{\vx \in \Gamma}\vert \widehat{\Omega}_w(\vx)\vert$, using the above formulas and
   $m_w \leq \vert \widehat{\Omega}_w(\vx)|$
   we obtain
   \begin{align*}
         \mathsf{b}\bigg(&(u_{\mathrm{ext}},\alpha, \wideparen{U}_{\mathrm{int}},u_{\infty}),
         \left(u_{\mathrm{ext}},
      \alpha - \tfrac{\sqrt{2}}{2} u_\infty, \wideparen{U}_\mathrm{int},\alpha
      \right) \bigg)\\
         &\geq\vert u_{\mathrm{ext}}\vert^2_{H^1(\Omega\setminus\Gamma)}+ \frac{1}{\varepsilon}\int_{\Gamma}\vert \wideparen{U}_{\mathrm{int}}\vert^2_{H^1(\widehat{\Omega}(\vx))} \mathrm{d}\sigma + \frac{m_w}{\varepsilon} \| \alpha\|_{L^2(\Gamma)}^2\\
     &\hspace{1em}+\frac{\sqrt{2}}{2\,\varepsilon} \|u_{\infty}\|^2_{L^2(\Gamma)}
     +\frac{1}{\varepsilon}\int_{\Gamma}u_{\infty}\int_{\widehat{\Omega}(\vx)} \left(J' + \tfrac{\sqrt{2}}{2}\right)\partial_Y\wideparen{U}_{\mathrm{int}} \mathrm{d}(X,Y)\mathrm{d}\sigma.
\end{align*}
Using Young's inequality we find that
\begin{multline*}
   \int_{\Gamma}u_{\infty}\int_{\widehat{\Omega}(\vx)} \left(J' + \tfrac{\sqrt{2}}{2}\right)\partial_Y\wideparen{U}_{\mathrm{int}} \mathrm{d}(X,Y)\mathrm{d}\sigma 
   \geq
   -\frac{3}{4}\int_{\Gamma} |\wideparen{U}_{\mathrm{int}} |_{H^1(\widehat{\Omega}(\vx))}^2 \mathrm{d}\sigma
   - \frac{1}{3} \left( \|J'\|_{L^\infty(\mathbb{R})} + \tfrac{\sqrt{2}}{2}
   \right)^2
   \| u_\infty\|^2_{L^2(\Gamma)}\,.
\end{multline*}
With the assumption on $\|J'\|_{L^\infty(\mathbb{R})}$ we can assert that
\begin{align*}
  1 - \frac{1}{3}\left(\|J'\|_{L^\infty(\mathbb{R})}
   + \tfrac{\sqrt{2}}{2}\right)^2 \geq \frac{1}{3}
\end{align*}
and therefore
\begin{align*}
   \mathsf{b}\bigg(&(u_{\mathrm{ext}},\alpha, \wideparen{U}_{\mathrm{int}},u_{\infty}),
         \left(u_{\mathrm{ext}},
       \alpha - \tfrac{\sqrt{2}}{2} u_\infty, \wideparen{U}_\mathrm{int}, \alpha
      \right) \bigg)\\
         &\geq\vert u_{\mathrm{ext}}\vert^2_{H^1(\Omega\setminus\Gamma)}+ \frac{1}{\varepsilon}
         \left(\frac{1}{4}\int_{\Gamma}\vert \wideparen{U}_{\mathrm{int}}\vert^2_{H^1(\widehat{\Omega}(\vx))} \mathrm{d}\sigma + m_w \| \alpha\|_{L^2(\Gamma)}^2
     + \frac{1}{3} \| u_{\infty}\|_{L^2(\Gamma)}^2\right) \\
     &\geq \gamma \sqrt{\vert u_{\mathrm{ext}}\vert^2_{H^1(\Omega\setminus\Gamma)}+ \frac{1}{\varepsilon}
         \left(\int_{\Gamma}\vert \wideparen{U}_{\mathrm{int}}\vert^2_{H^1(\widehat{\Omega}(\vx))} \mathrm{d}\sigma + \| \alpha\|_{L^2(\Gamma)}^2
     + \| u_{\infty}\|_{L^2(\Gamma)}^2\right)}\\
    &\hspace{1em}
    \cdot \sqrt{\vert u_{\mathrm{ext}}\vert^2_{H^1(\Omega\setminus\Gamma)}+ \frac{1}{\varepsilon}
         \left(\int_{\Gamma}\vert \wideparen{U}_{\mathrm{int}}\vert^2_{H^1(\widehat{\Omega}(\vx))} \mathrm{d}\sigma + \| \alpha - \tfrac{\sqrt{2}}{2}u_\infty\|_{L^2(\Gamma)}^2
     + \| \alpha \|_{L^2(\Gamma)}^2\right)}
\end{align*}
for some well-chosen $\gamma > 0$ only depending on $m_w$ since
\begin{align*}
\| \alpha - \tfrac{\sqrt{2}}{2}u_\infty\|^2_{L^2(\Gamma)} + \|\alpha\|^2_{L^2(\Gamma)} \leq 2 \left( \| \alpha \|^2_{L^2(\Gamma)} + \| u_\infty\|^2_{L^2(\Gamma)} \right).
\end{align*}
As the $H^1(\widehat{\Omega}(\vx))$-seminorm and the $BL(\widehat{\Omega}(\vx))$-norm are equivalent on $BL_{0,\sharp}(\widehat{\Omega}(\vx))$ by Lemma~\ref{lem:BL}
the inequality~\eqref{eq:coupled:inf_sup:1} follows.

Now, we aim to show the second inf-sup condition. For this we fix the test functions and choose appropriate trial functions. More precisely, we see that for
$M_w := \max(4, \frac{1}{3} \sup_{\vx \in \Gamma}\vert \widehat{\Omega}_w(\vx)\vert)$ it holds
\begin{align*}
         \mathsf{b}\bigg(&
         \left(v_{\mathrm{ext}}, v_\infty, 2 M_w \wideparen{V}_\mathrm{int},
      M_w (v_\infty -  2\beta)\right),
      \left(v_{\mathrm{ext}},\beta, \wideparen{V}_{\mathrm{int}},v_{\infty}
         \right)\bigg)\\
         &= \vert v_{\mathrm{ext}}\vert^2_{H^1(\Omega\setminus\Gamma)}
         + \frac{2M_w}{\varepsilon}\int_{\Gamma}\vert \wideparen{V}_{\mathrm{int}}\vert^2_{H^1(\widehat{\Omega}(\vx))} \mathrm{d}\sigma
         + \frac{M_w}{\varepsilon} \| v_\infty \|_{L^2(\Gamma)}^2
         + \frac{2M_w}{\varepsilon} \| \beta \|^2_{L^2(\Gamma)}\\
     &\hspace{1em}
     + \frac{1}{\varepsilon} \int_\Gamma \int_{\widehat{\Omega}(\vx)}  \left(  (1 + 2M_w J') v_\infty -  2 M_w  (1 + J') \beta \right)\partial_Y \wideparen{V}_{\mathrm{int}} \text{d}(X,Y) \,\text{d}\sigma \\
     &\hspace{1em}
     + \frac{1}{\varepsilon}\int_\Gamma  (|\widehat{\Omega}_w| -3 M_w)\beta v_\infty \text{d}\sigma\ .
     \intertext{Using Young's inequality, the definition of $M_w$ and the assumption on $J'$ we can estimate the mixed terms and using again
     Poincar\'e estimate of Lemma~\ref{lem:BL} we obtain}
          \mathsf{b}\bigg(&
         \left(v_{\mathrm{ext}}, v_\infty, 2 M_w \wideparen{V}_\mathrm{int},
      M_w (v_\infty -  2\beta)\right),
      \left(v_{\mathrm{ext}},\beta, \wideparen{V}_{\mathrm{int}},v_{\infty}
         \right)\bigg)\\
     &\geq \vert v_{\mathrm{ext}}\vert^2_{H^1(\Omega\setminus\Gamma)} \\
     &\hspace{1em}
     + \frac{1}{\varepsilon}
     \left( 2M_w - \tfrac{1}{2} (1 + M_w \|J'\|_{L^\infty(\mathbb{R})})
     - M_w( 1 + \|J'\|_{L^\infty(\mathbb{R})}) \right)
     \int_{\Gamma}\vert \wideparen{V}_{\mathrm{int}}\vert^2_{H^1(\widehat{\Omega}(\vx))} \mathrm{d}\sigma \\
       &\hspace{1em}
        + \frac{1}{\varepsilon} \left( M_w - \tfrac{1}{2} (1 + M_w \|J'\|_{L^\infty(\mathbb{R})}) - \tfrac12 (|\widehat{\Omega}_w| -3 M_w)
        \right) \| v_\infty \|_{L^2(\Gamma)}^2 \\
        &\hspace{1em}
        + \frac{1}{\varepsilon} \left( 2M_w - M_w (1 + \|J'\|_{L^\infty(\mathbb{R})}) - \tfrac12 (|\widehat{\Omega}_w| -3 M_w)
        \right) \| \beta \|_{L^2(\Gamma)}^2 \\
    &\geq \vert v_{\mathrm{ext}}\vert^2_{H^1(\Omega\setminus\Gamma)}
       + \frac{1}{2\,\varepsilon}
     \int_{\Gamma}\vert \wideparen{V}_{\mathrm{int}}\vert^2_{H^1(\widehat{\Omega}(\vx))} \mathrm{d}\sigma
        + \frac{11}{\varepsilon} \| v_\infty \|_{L^2(\Gamma)}^2
        + \frac{2}{\varepsilon}  \| \beta \|_{L^2(\Gamma)}^2 \\
      &\geq \vert v_{\mathrm{ext}}\vert^2_{H^1(\Omega\setminus\Gamma)}
        + \frac{1}{2\,\varepsilon(C_p+1)}
      \| \wideparen{V}_{\mathrm{int}}\|_{L^2(\Gamma, BL(\widehat{\Omega}(\vx))}^2
         + \frac{11}{\varepsilon} \| v_\infty \|_{L^2(\Gamma)}^2
         + \frac{2}{\varepsilon}  \| \beta \|_{L^2(\Gamma)}^2\ ,
\end{align*}
which is positive if $\left(v_{\mathrm{ext}},\beta, \wideparen{V}_{\mathrm{int}},v_{\infty}
         \right) \in \mathcal{W} \setminus \{ (0,0,0,0) \}$.
\end{proof}

\begin{theorem}[Well-posedness]
  Let $\|J'\|_{L^\infty(\mathbb{R})} \leq \tfrac{1}{2}$ and let $\varepsilon_0 > 0$. Then, for all $\varepsilon \in (0, \varepsilon_0]$ the variational formulation~\eqref{eq:coupled_system:var} admits a unique solution and
  there exists a constant $C > 0$ independent of $\varepsilon$ such that
  \begin{equation}
   \vert u_{\mathrm{ext}} \vert_{H^1(\Omega\setminus\Gamma)}
   + \frac{1}{\sqrt{\varepsilon}} \left(\Vert \alpha\Vert_{L^2(\Gamma)}
   + \|\wideparen{U}_{\mathrm{int}}\|_{L^2(\Gamma, BL(\widehat{\Omega}))}
   +
   \Vert u_{\infty}\Vert_{L^2(\Gamma)} \right) \leq C \Vert f \Vert_{L^2(\Omega\setminus\Gamma)}\ .
   \label{eq:coupled_system:stability}
\end{equation}
  \label{lem:coupled_system:stability}
\end{theorem}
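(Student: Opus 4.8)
The plan is to rewrite~\eqref{eq:coupled_system:var} as the abstract problem: find $\mathcal{U}:=(u_{\mathrm{ext}},\alpha,\wideparen{U}_{\mathrm{int}},u_\infty)\in\mathcal{W}$ with $\mathsf{b}(\mathcal{U},\mathcal{V})=\ell(\mathcal{V})$ for all $\mathcal{V}:=(v_{\mathrm{ext}},\beta,\wideparen{V}_{\mathrm{int}},v_\infty)\in\mathcal{W}$, where $\mathsf{b}$ is the bilinear form~\eqref{eq:coupled_system:bf} and $\ell(\mathcal{V}):=\int_{\Omega\setminus\Gamma}f\,v_{\mathrm{ext}}\,\mathrm d\vx$, and then to apply the Banach--Ne\v{c}as--Babu\v{s}ka theorem using the two inf-sup conditions of Lemma~\ref{lem:coupled:inf_sup}. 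First I would verify that $\mathsf{b}$ is bounded on $(\mathcal{W},\|\cdot\|_{\mathcal{W},\varepsilon})$: the volume terms by Cauchy--Schwarz, the boundary integrals over $\partial\widehat{\Omega}(\vx)$ by the trace inequality on $BL_{0,\sharp}(\widehat{\Omega}(\vx))$ with a constant uniform in $\vx\in\Gamma$ (using the finite partition of $\Gamma$, the continuous dependence of $\widehat{\Omega}(\vx)$ on $\vx$, and Lemma~\ref{lem:BL}), the integrals over $\Gamma$ containing $[v_{\mathrm{ext}}]$ by the trace theorem on $H^1(\Omega\setminus\Gamma)$, and the term with $J''$ by $\|J''\|_{L^2(\mathbb{R})}<\infty$. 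Similarly $\ell$ is bounded with $\sup_{\mathcal{V}\neq 0}|\ell(\mathcal{V})|/\|\mathcal{V}\|_{\mathcal{W},\varepsilon}\le\|f\|_{L^2(\Omega\setminus\Gamma)}$, uniformly in $\varepsilon$, since $\|v_{\mathrm{ext}}\|_{L^2(\Omega\setminus\Gamma)}\le\|v_{\mathrm{ext}}\|_{H^1(\Omega\setminus\Gamma)}\le\|\mathcal{V}\|_{\mathcal{W},\varepsilon}$.

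The crux is that~\eqref{eq:coupled:inf_sup:1} bounds only the seminorm $|\mathcal{U}|_{\mathcal{W},\varepsilon}$ from below --- it does not see $\|u_{\mathrm{ext}}\|_{L^2(\Omega\setminus\Gamma)}$ or $\|[u_{\mathrm{ext}}]\|_{L^2(\Gamma)}$ --- so it is not yet the hypothesis of Banach--Ne\v{c}as--Babu\v{s}ka. To recover the missing pieces I would use, for the solution, that testing the second equation of~\eqref{eq:coupled_system:var} with arbitrary $v_\infty$ forces $[u_{\mathrm{ext}}]=u_\infty$ a.e.\ on $\Gamma$, hence $\|[u_{\mathrm{ext}}]\|_{L^2(\Gamma)}^2=\|u_\infty\|_{L^2(\Gamma)}^2\le\varepsilon\,|\mathcal{U}|_{\mathcal{W},\varepsilon}^2\le\varepsilon_0\,|\mathcal{U}|_{\mathcal{W},\varepsilon}^2$; together with the generalized Poincar\'e inequality $\|v\|_{L^2(\Omega\setminus\Gamma)}\le C_P\big(|v|_{H^1(\Omega\setminus\Gamma)}+\|[v]\|_{L^2(\Gamma)}\big)$ on $H^1_*(\Omega\setminus\Gamma)$ --- which holds because the $H^1$-seminorm, the jump functional and the constraint $\int_{\Omega_f}v=0$ are jointly definite on $H^1_*(\Omega\setminus\Gamma)$, so a Rellich compactness argument applies --- this gives $\|u_{\mathrm{ext}}\|_{L^2(\Omega\setminus\Gamma)}^2\lesssim|\mathcal{U}|_{\mathcal{W},\varepsilon}^2$ and hence $\|\mathcal{U}\|_{\mathcal{W},\varepsilon}\le C\,|\mathcal{U}|_{\mathcal{W},\varepsilon}$ with $C$ independent of $\varepsilon$ (this is the only place $\varepsilon\le\varepsilon_0$ enters).

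For existence and uniqueness I would identify the kernel $\mathcal{N}:=\{\mathcal{V}\in\mathcal{W}:|\mathcal{V}|_{\mathcal{W},\varepsilon}=0\}$, which consists of the tuples $(u_0,0,0,0)$ with $u_0$ piecewise constant on $\Omega\setminus\Gamma$ and $\int_{\Omega_f}u_0=0$, and is finite-dimensional. On the $\|\cdot\|_{\mathcal{W},\varepsilon}$-orthogonal complement $\mathcal{N}^\perp$ the seminorm is equivalent to $\|\cdot\|_{\mathcal{W},\varepsilon}$ (again by a Rellich argument: a near-minimizing sequence would have $\nabla u_{\mathrm{ext}}^{k}\to 0$ in $L^2$ while staying bounded in $H^1(\Omega\setminus\Gamma)$, forcing a subsequential limit in $\mathcal{N}\cap\mathcal{N}^\perp=\{0\}$), so~\eqref{eq:coupled:inf_sup:1} is a genuine norm inf-sup on $\mathcal{N}^\perp$; on the finite-dimensional directions $\mathcal{N}$ one checks, as in the proof of Lemma~\ref{lem:coupled:inf_sup}, that $\mathsf{b}((u_0,0,0,0),(0,0,0,-[u_0]))=\varepsilon^{-1}|\Gamma|\,[u_0]^2>0$ whenever $(u_0,0,0,0)\in\mathcal{N}\setminus\{0\}$, the jump $[u_0]$ being a nonzero constant since $u_0$ is piecewise constant with vanishing mean on $\Omega_f$. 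A standard combination of these two test functions upgrades~\eqref{eq:coupled:inf_sup:1} to the full inf-sup condition on $\mathcal{W}$; with the second (transpose non-degeneracy) condition of Lemma~\ref{lem:coupled:inf_sup} and the boundedness of $\mathsf{b}$, Banach--Ne\v{c}as--Babu\v{s}ka then yields a unique solution. (Uniqueness alone is immediate: $\mathsf{b}(\mathcal{U},\cdot)=0$ forces $|\mathcal{U}|_{\mathcal{W},\varepsilon}=0$, i.e.\ $\mathcal{U}=(u_0,0,0,0)$ with $u_0$ piecewise constant; testing with $v_\infty$ gives $[u_0]=0$, so $u_0$ is globally constant and the constraint $\int_{\Omega_f}u_0=0$ makes it vanish.)

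Finally I would assemble the bound: applying~\eqref{eq:coupled:inf_sup:1} to the solution and using $\mathsf{b}(\mathcal{U},\cdot)=\ell$ gives $\gamma\,|\mathcal{U}|_{\mathcal{W},\varepsilon}\le\sup_{\mathcal{V}\neq 0}|\ell(\mathcal{V})|/\|\mathcal{V}\|_{\mathcal{W},\varepsilon}\le\|f\|_{L^2(\Omega\setminus\Gamma)}$; combining with $\|\mathcal{U}\|_{\mathcal{W},\varepsilon}\le C\,|\mathcal{U}|_{\mathcal{W},\varepsilon}$ yields $\|\mathcal{U}\|_{\mathcal{W},\varepsilon}\le(C/\gamma)\|f\|_{L^2(\Omega\setminus\Gamma)}$, and since each summand on the left of~\eqref{eq:coupled_system:stability} is controlled by the corresponding summand in the definition~\eqref{eq:norm:Wepsilon} of $\|\cdot\|_{\mathcal{W},\varepsilon}$, the estimate follows with $C$ depending only on $\gamma$, $C_P$ and $\varepsilon_0$. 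The main obstacle throughout is this seminorm-to-norm passage --- recovering $\|u_{\mathrm{ext}}\|_{L^2(\Omega\setminus\Gamma)}$ and the jump term, and making sure the finite-dimensional seminorm kernel does not obstruct solvability --- while everything else is a routine application of the abstract theory and of the already proven lemmas.
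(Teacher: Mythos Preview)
Your argument for the a priori bound and uniqueness is essentially the paper's: invoke~\eqref{eq:coupled:inf_sup:1}, read off $u_\infty=[u_{\mathrm{ext}}]$ from the second equation of~\eqref{eq:coupled_system:var}, and close with the Poincar\'e inequality on $H^1_*(\Omega\setminus\Gamma)$. For existence, however, the paper takes a different and shorter route. Rather than trying to upgrade~\eqref{eq:coupled:inf_sup:1} to a genuine norm inf--sup by splitting off the finite-dimensional seminorm kernel~$\mathcal{N}$ and then combining test functions, the paper adds and subtracts the single term $\int_\Gamma [u_{\mathrm{ext}}][v_{\mathrm{ext}}]\,\mathrm d\sigma$: the perturbed form $\mathsf{b}_0=\mathsf{b}+\int_\Gamma[\cdot][\cdot]$ satisfies the full norm inf--sup conditions (the added jump term supplies exactly the missing control of $\|u_{\mathrm{ext}}\|_{H^1_*}$), so the associated operator is an isomorphism; the correction $\mathsf{K}$ defined by $-\int_\Gamma[\cdot][\cdot]$ is compact because $H^{1/2}(\Gamma)\hookrightarrow L^2(\Gamma)$ is compact; hence $\mathsf{B}=\mathsf{B}_0+\mathsf{K}$ is Fredholm of index~$0$ and the uniqueness already established yields existence.

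Both approaches ultimately rest on Rellich compactness, but packaged differently. The Fredholm route avoids the ``standard combination'' step you invoke, which is the one genuinely delicate point in your plan: for general $\mathcal{U}=\mathcal{U}^\perp+\mathcal{U}_0$ the cross terms $\mathsf{b}(\mathcal{U}^\perp,\mathcal{V}_2)$ and $\mathsf{b}(\mathcal{U}_0,\mathcal{V}_1)$ have to be absorbed, and doing so while keeping track of the $\varepsilon$-scaling requires care (for existence alone an $\varepsilon$-dependent constant suffices, so this is not fatal, but it is more work than you indicate). Your direct BNB argument has the virtue of being self-contained and of making the role of the one-dimensional kernel $\mathcal{N}$ explicit; the paper's compact-perturbation argument is quicker and sidesteps that bookkeeping entirely.
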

\begin{proof}
   The proof is divided into two steps. First, we show that a solution of~\eqref{eq:coupled_system:var} is bounded by the right hand side $f$ and therefore it is unique. Second, we use the Fredholm theory to conclude that a solution exists for any $f \in L^2(\Omega \setminus \Gamma)$.

   Using the definition of the bilinear form $\mathsf{b}$ we see that the solution $(u_{\mathrm{ext}},\alpha, \wideparen{U}_\mathrm{int},u_{\infty}) \in \mathcal{W}$ of~\eqref{eq:coupled_system:var} satisfies for all $(v_{\mathrm{ext}},\beta, \wideparen{V}_\mathrm{int},v_{\infty}) \in \mathcal{W}$
   \begin{align*}
      \mathsf{b}\left((u_{\mathrm{ext}},\alpha, \wideparen{U}_\mathrm{int},u_{\infty}), (v_{\mathrm{ext}},\beta, \wideparen{V}_\mathrm{int},v_{\infty})\right) = \int_{\Omega \setminus \Gamma} f v_{\mathrm{ext}} \,\text{d}\vx\ .
   \end{align*}
   Moreover, in view of the second equation of~\eqref{eq:coupled_system:var} we can assert that
   \begin{align}
      u_{\infty} = [u_{\mathrm{ext}}] \in H^{\nicefrac{1}{2}}(\Gamma)\ .
      \label{eq:uinf_equal_jump_uext}
   \end{align}
  Using~\eqref{eq:coupled:inf_sup:1}, the Cauchy-Schwarz inequality and~\eqref{eq:uinf_equal_jump_uext} we find for any $\varepsilon_0 > 0$ and $\varepsilon \leq \varepsilon_0$ that
  \begin{align*}
    \frac{1}{\gamma} \|f &\|_{L^2(\Omega \setminus \Gamma)} \big\| u_{\mathrm{ext}} \big\|_{L^2(\Omega \setminus \Gamma)} \\
    &\geq
     \big| u_{\mathrm{ext}} \big|_{H^1(\Omega \setminus \Gamma)}^2
     + \frac{1}{\varepsilon}\left(
    \big\| \alpha \big\|_{L^2(\Gamma)}^2
   + \|\wideparen{U}_{\mathrm{int}}\|_{L^2(\Gamma, BL(\widehat{\Omega}))}^2
    + \big\| u_\infty \big\|_{L^2(\Gamma)}^2 \right) \\
    &\geq
    \big| u_{\mathrm{ext}} \big|_{H^1(\Omega \setminus \Gamma)}^2
    + \tfrac{1}{2\varepsilon_0} \| [u_{\mathrm{ext}}] \|_{L^2(\Gamma)}^2
   + \tfrac{1}{\varepsilon}\big\| \alpha \big\|_{L^2(\Gamma)}^2
   + \tfrac{1}{\varepsilon}\|\wideparen{U}_{\mathrm{int}}\|_{L^2(\Gamma, BL(\widehat{\Omega}))}^2
    + \tfrac{1}{2\varepsilon} \big\| u_\infty \big\|_{L^2(\Gamma)}^2 \\
    &\geq
    \min(1, \tfrac{1}{2\varepsilon_0}) \Vert u_{\mathrm{ext}} \Vert^2_{H^1_*(\Omega\setminus\Gamma)}
    + \tfrac{1}{2\varepsilon}\left(\big\| \alpha \big\|_{L^2(\Gamma)}^2
   + \|\wideparen{U}_{\mathrm{int}}\|_{L^2(\Gamma, BL(\widehat{\Omega}))}^2
    + \big\| u_\infty \big\|_{L^2(\Gamma)}^2\right)\ .
  \end{align*}
  Using the Poincar\'{e} inequality for functions in $H^1_*(\Omega\setminus\Gamma)$
  \begin{align*}
     \big\| v_{\mathrm{ext}} \big\|_{L^2(\Omega \setminus \Gamma)}
     \leq C_P \big\| v_{\mathrm{ext}} \big\|_{H^1_*(\Omega\setminus\Gamma)} \quad \text{ for all } v_{\mathrm{ext}} \in H^1_*(\Omega\setminus\Gamma)
  \end{align*}
  and using Young's inequality we obtain
  \begin{multline*}
     \min(1, \tfrac{1}{2\varepsilon_0})\Vert u_{\mathrm{ext}} \Vert^2_{H^1_*(\Omega\setminus\Gamma)}
    + \tfrac{1}{2\varepsilon}\left(\big\| \alpha \big\|_{L^2(\Gamma)}^2
   + \|\wideparen{U}_{\mathrm{int}}\|_{L^2(\Gamma, BL(\widehat{\Omega}))}^2
    +  \big\| u_\infty \big\|_{L^2(\Gamma)}^2 \right)
    \leq \frac{C_P^2}{2\gamma^2} \|f \|_{L^2(\Omega \setminus \Gamma)}^2\ ,
  \end{multline*}
  and~\eqref{eq:coupled_system:stability} follows.

  Now, we define the sesquilinearform
  \begin{multline*}
     \mathsf{b}_0\left((u_{\mathrm{ext}},\alpha, \wideparen{U}_\mathrm{int},u_{\infty}), (v_{\mathrm{ext}},\beta, \wideparen{V}_\mathrm{int},v_{\infty})\right) 
     :=
     \mathsf{b}\left((u_{\mathrm{ext}},\alpha, \wideparen{U}_\mathrm{int},u_{\infty}), (v_{\mathrm{ext}},\beta, \wideparen{V}_\mathrm{int},v_{\infty})\right)
     + \int_\Gamma [u_{\mathrm{ext}}] [v_{\mathrm{ext}}] \,\text{d}\sigma
  \end{multline*}
  for which corresponding inf-sup-conditions with
  the norm $\|\cdot \|_{\mathcal{W},\varepsilon}$ as defined in~\eqref{eq:norm:Wepsilon} holds.
  Hence, the associated operator $\mathsf{B}_0: \mathcal{W} \to \mathcal{W}$ is an isomorphism.
  Moreover, the operator $\mathsf{K}: \mathcal{W} \to \mathcal{W}$ defined by
  \begin{align*}
      \left(\mathsf{K}(u_{\mathrm{ext}},\alpha, \wideparen{U}_\mathrm{int},u_{\infty}), (v_{\mathrm{ext}},\beta, \wideparen{V}_\mathrm{int},v_{\infty})\right)_{\mathcal{W},\varepsilon}
      = -\int_\Gamma [u_{\mathrm{ext}}] [v_{\mathrm{ext}}] \,\text{d}\sigma
      \quad \forall (v_{\mathrm{ext}},\beta, \wideparen{V}_\mathrm{int},v_{\infty}) \in \mathcal{W}
  \end{align*}
with the corresponding inner product $(\cdot,\cdot)_{\mathcal{W},\varepsilon}$
is compact as the trace space $H^{\nicefrac{1}{2}}(\Gamma)$ of $H^1_\star(\Omega \setminus \Gamma)$ is compactly embedded in $L^2(\Gamma)$ due to theorem of Rellich-Kondrachov. Hence, the operator $\mathsf{B} = \mathsf{B}_0 + \mathsf{K}$ corresponding to the sesquilinear form $\mathsf{b}$ of the variational formulation~\eqref{eq:coupled_system:var} is a Fredholm operator of index~$0$. Hence, by the Fredholm alternative we can conclude from the uniqueness of a solution of~\eqref{eq:coupled_system:var}, which we have shown above, its existence.

This completes the proof.
\end{proof}

\begin{remark}
   The condition $\|J'\|_{L^\infty(\mathbb{R})} \leq \frac12$ is fulfilled for example for the piecewise polynomial
   \begin{align*}
      J(Y) &= \begin{cases}
                 \frac{\operatorname{sgn}(Y)}{32}(|Y|-R_0)^3 \left( 3 (|Y| - R_0)^2 - 15 (|Y| - R_0) + 20\right), & R_0 \leq |Y| < R_0 + 2,\\[0.3em]
                 \frac{\operatorname{sgn}(Y)}{2}, &
                 |Y| \geq R_0 + 2,\\[0.3em]
                 0, & \text{otherwise},
              \end{cases}
   \end{align*}
   which is in $C^2(\mathbb{R})$ for any $R_0 > 0$, and it holds $\|J'\|_{L^\infty(\mathbb{R})} = \frac{15}{32}$.
\end{remark}

\subsection{Coupled formulation with truncated periodicity cells}

Now, as in Sec.~\ref{AbschneidefehlerInnen} the near field function shall be truncated at $Y=\pm R$ for some $R>R_1$, as this would simplify a numerical discretization of the formulation. Let $\widehat{\Omega}_R(\vx) = \widehat{\Omega}(\vx) \cap  [0,1]\times [-R,R]$ the truncated periodicity cell for each $\vx \in \Gamma$. The truncated solution will be seeked in the space
\begin{align*}
  L^2(\Gamma, BL_{0,\sharp}(\widehat{\Omega}_R)) &:= \Big\{ \wideparen{V}_{\mathrm{int}}(\vx,\cdot,\cdot) \in BL_{0,\sharp}(\widehat{\Omega}_R(\vx)) \text{ for almost all } \vx \in \Gamma, \\
&\hspace{2em}\|\wideparen{V}_{\mathrm{int}}(\cdot,X,Y)\|_{BL(\widehat{\Omega}_R(\cdot))} \in L^2(\Gamma),
\wideparen{V}_{\mathrm{int}}(\cdot,\cdot, \pm R) = 0
\Big\},
\end{align*}
which is equipped with the $\|\cdot\|_{L^2(\Gamma, BL(\widehat{\Omega}))}$-norm.

Then, the coupled variational formulation with the truncated periodicity cells is:
Seek $(u_{\mathrm{ext},R}, \alpha_R, \wideparen{U}_{\mathrm{int},R}, u_{\infty,R}) \in H^1_*(\Omega \setminus \Gamma) \times L^2(\Gamma) \times  L^2(\Gamma, BL_{0,\sharp}(\widehat{\Omega}_R)) \times L^2(\Gamma)$ such that
\begin{align}
\int_{\Omega \setminus \Gamma} \nabla u_{\textrm{ext},R} \cdot \nabla v_{\mathrm{ext},R} \mathrm{d}\vx + \frac{1}{\varepsilon}\int_{\Gamma} \alpha_R[v_{\mathrm{ext},R}] \mathrm{d}\sigma
&= \int_{\Omega \setminus \Gamma} f v_{\mathrm{ext},R} \mathrm{d}\vx  \nonumber\\
    \int_\Gamma -[u_{\textrm{ext},R}]v_{\infty,R} + u_{\infty,R} v_{\infty,R} \mathrm{d}\sigma &= 0 \nonumber\\
    \frac{1}{\varepsilon}\int_\Gamma\bigg(\alpha_R  \int_{\partial\widehat{\Omega}(x)} \wideparen{V}_{\mathrm{int},R} \widehat{n}_2  \mathrm{d}_{XY}\sigma \hspace{12em} \label{eq:coupled_system:truncated:var}\\[-0.5em]
    + \int_{\widehat{\Omega}_R(x)}\nabla \wideparen{U}_{\mathrm{int},R} \cdot \nabla \wideparen{V}_{\mathrm{int},R} - u_{\infty,R} J''\wideparen{V}_{\mathrm{int},R} \mathrm{d}(X,Y) \bigg) \mathrm{d}\sigma &= 0\nonumber\\
   \frac{1}{\varepsilon} \int_{\Gamma} \left(\alpha_R \beta_R \vert \widehat{\Omega}_w(\vx) \vert -\beta_R \int_{\partial \widehat{\Omega}(\vx)} \wideparen{U}_{\mathrm{int},R} \widehat{n}_2 \mathrm{d}_{XY}\sigma
   - u_{\infty,R} \beta_R \right)\mathrm{d}\sigma  &= 0 \nonumber
\end{align}
for all $(v_{\mathrm{ext},R}, \beta_R, \wideparen{V}_{\mathrm{int},R}, v_{\infty,R}) \in H^1_*(\Omega \setminus \Gamma) \times L^2(\Gamma) \times  L^2(\Gamma, BL_{0,\sharp}(\widehat{\Omega}_R)) \times L^2(\Gamma)$.

To discuss the well-posedness we introduce the product space $\mathcal{W}_R = H^1_*(\Omega \setminus \Gamma) \times L^2(\Gamma)\times L^2(\Gamma, BL_{0,\sharp}(\widehat{\Omega}_R)) \times L^2(\Gamma)$ equipped with the norm defined in~\eqref{eq:norm:Wepsilon}, and we define the bilinear form $\mathsf{b}_R$ as in~\eqref{eq:coupled_system:bf} where $\widehat{\Omega}(\vx)$ is replaced by $\widehat{\Omega}_R(\vx)$.

\begin{lemma}[inf-sup conditions]
Let $\|J'\|_{L^\infty(\mathbb{R})} \leq \tfrac{1}{2}$.
Then there exists a constant $\gamma > 0$ independent of $\varepsilon$ and $R$
such that for all $(u_{\mathrm{ext},R}, \alpha_R, \wideparen{U}_{\mathrm{int},R},u_{\infty,R}) \in \mathcal{W}_R$ it holds
\begin{subequations}
  \begin{align}
     \sup_{(v_{\mathrm{ext},R},\beta_R, \wideparen{V}_{\mathrm{int},R},v_{\infty,R}) \in \mathcal{W}_R \setminus \{ \boldsymbol{0} \} }
     \hspace{-2.5em}
     \frac{\left| \mathsf{b}_R((u_{\mathrm{ext},R}, \alpha_R, \wideparen{U}_{\mathrm{int},R},u_{\infty,R}),(v_{\mathrm{ext},R},\beta_R, \wideparen{V}_{\mathrm{int},R},v_{\infty,R})) \right|}
     {\left\| (v_{\mathrm{ext},R},\beta_R, \wideparen{V}_{\mathrm{int},R},v_{\infty,R}) \right\|_{\mathcal{W},\varepsilon}} \quad&
     \label{eq:coupled:truncated:inf_sup:1}
     \\
     \geq \gamma \left|(u_{\mathrm{ext},R}, \alpha_R, \wideparen{U}_{\mathrm{int},R},u_{\infty,R})\right|_{\mathcal{W},\varepsilon}& \ ,
     \nonumber \\
     \intertext{and for all $(v_{\mathrm{ext},R},\beta_R, \wideparen{V}_{\mathrm{int},R},v_{\infty},R) \in \mathcal{W}_R \setminus \{(0,0,0,0) \}$ it holds}
     \sup_{(u_{\mathrm{ext},R},\alpha_R, \wideparen{U}_{\mathrm{int},R}, u_{\infty,R}) \in \mathcal{W}_R \setminus \{ \boldsymbol{0} \} }
     \hspace{-2.5em}
     \left| \mathsf{b}_R((u_{\mathrm{ext},R}, \alpha_R, \wideparen{U}_{\mathrm{int},R},u_{\infty,R}),(v_{\mathrm{ext},R},\beta_R, \wideparen{V}_{\mathrm{int},R},v_{\infty,R})) \right|
     &> 0 \ .
  \end{align}
\end{subequations}
\label{lem:coupled:truncated:inf_sup}
\end{lemma}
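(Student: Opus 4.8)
The plan is to repeat the argument of Lemma~\ref{lem:coupled:inf_sup} almost verbatim, with $\widehat{\Omega}(\vx)$ replaced by the truncated cell $\widehat{\Omega}_R(\vx)$ throughout, and to verify that the few places where the geometry of the cell entered are unaffected by the truncation and, crucially, that all constants stay uniform in $R$. No step of that proof uses a property of the cell that is destroyed by the cut at $|Y|=R$.

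First I would re-derive the two integration-by-parts identities that opened the proof of Lemma~\ref{lem:coupled:inf_sup}. For $\wideparen{U}_{\mathrm{int},R} \in BL_{0,\sharp}(\widehat{\Omega}_R(\vx))$ one still has
\begin{align*}
   \int_{\partial\widehat{\Omega}_R(\vx)} \wideparen{U}_{\mathrm{int},R}\,\widehat{n}_2 \,\mathrm{d}\sigma_{XY}
   &= \int_{\widehat{\Omega}_R(\vx)} \partial_Y \wideparen{U}_{\mathrm{int},R} \,\mathrm{d}(X,Y),\\
   -\int_{\widehat{\Omega}_R(\vx)} J''\,\wideparen{U}_{\mathrm{int},R}\,\mathrm{d}(X,Y)
   &= \int_{\widehat{\Omega}_R(\vx)} J'\,\partial_Y \wideparen{U}_{\mathrm{int},R}\,\mathrm{d}(X,Y),
\end{align*}
because the lateral boundary pieces at $X=0,1$ cancel by the periodicity condition, the horizontal pieces at $Y=\pm R$ carry no contribution since $\wideparen{U}_{\mathrm{int},R}(\cdot,\cdot,\pm R)=0$, and the boundary term $\int_{\partial\widehat{\Omega}_R(\vx)} J'\wideparen{U}_{\mathrm{int},R}\widehat{n}_2\,\mathrm{d}\sigma_{XY}$ vanishes entirely: $J'=0$ on $\partial\widehat{\Omega}_w(\vx)$ as $J$ is constant on $[-R_0,R_0]$, and $J'(Y)=0$ on the cut lines $|Y|=R>R_1$ as well. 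Thus the algebraic identities that drove the energy estimate are available word for word on $\widehat{\Omega}_R(\vx)$.

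Next I would insert, for the first inequality, the test tuple $(v_{\mathrm{ext},R},\beta_R,\wideparen{V}_{\mathrm{int},R},v_{\infty,R}) = (u_{\mathrm{ext},R},\, \alpha_R - \tfrac{\sqrt{2}}{2}u_{\infty,R},\, \wideparen{U}_{\mathrm{int},R},\, \alpha_R)$ into $\mathsf{b}_R$; the mixed $\alpha_R$--$u_{\infty,R}$ contributions cancel exactly as before, leaving $|u_{\mathrm{ext},R}|^2_{H^1(\Omega\setminus\Gamma)}$, $\tfrac1\varepsilon\int_\Gamma |\wideparen{U}_{\mathrm{int},R}|^2_{H^1(\widehat{\Omega}_R(\vx))}\mathrm{d}\sigma$, $\tfrac{m_w}{\varepsilon}\|\alpha_R\|^2_{L^2(\Gamma)}$, $\tfrac{\sqrt{2}}{2\varepsilon}\|u_{\infty,R}\|^2_{L^2(\Gamma)}$ and the bilinear remainder $\tfrac1\varepsilon\int_\Gamma u_{\infty,R}\int_{\widehat{\Omega}_R(\vx)}(J'+\tfrac{\sqrt{2}}{2})\partial_Y\wideparen{U}_{\mathrm{int},R}\,\mathrm{d}(X,Y)\,\mathrm{d}\sigma$, with $m_w$ and $M_w$ as in the proof of Lemma~\ref{lem:coupled:inf_sup}. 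Young's inequality together with $\|J'\|_{L^\infty(\mathbb{R})}\le\tfrac12$ absorbs the remainder just as there, producing a lower bound with the same numerical constants ($\tfrac14$ in front of the $\wideparen{U}$-term, $\tfrac13$ in front of the $u_{\infty}$-term). For the second inequality I would fix the test tuple and pair it with the trial tuple $(v_{\mathrm{ext},R},\, v_{\infty,R},\, 2M_w\wideparen{V}_{\mathrm{int},R},\, M_w(v_{\infty,R}-2\beta_R))$, repeating the same Young-inequality bookkeeping that appears at the end of Lemma~\ref{lem:coupled:inf_sup} to obtain strict positivity on $\mathcal{W}_R\setminus\{(0,0,0,0)\}$.

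The only genuinely new point, and the place I would be most careful, is the $R$-uniformity of $\gamma$. Two facts secure it: the canonical jump function $J$ and the quantities $m_w$, $M_w$ do not involve $R$, so all coefficients appearing after the Young estimates are $R$-free; and to pass from the $H^1(\widehat{\Omega}_R(\vx))$-seminorm back to the $BL(\widehat{\Omega}_R(\vx))$-norm entering $\|\cdot\|_{\mathcal{W},\varepsilon}$ I would invoke the norm equivalence on $BL_{0,\sharp}(\widehat{\Omega}_R)$ with constant independent of $R$ established (via the Poincar\'e inequality of Lemma~\ref{lem:BL} and the density of the extended-by-zero truncated spaces) in the discussion preceding Lemma~\ref{lem:var:UparenR_alphaR}. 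Combining these, a single $\gamma>0$ works for all $R>R_1$ and all $\varepsilon\in(0,\varepsilon_0]$, which proves both~\eqref{eq:coupled:truncated:inf_sup:1} and the second assertion.
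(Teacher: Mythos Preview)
Your proposal is correct and follows essentially the same route as the paper: both proofs re-establish the two integration-by-parts identities on the truncated cell (using that $\wideparen{U}_{\mathrm{int},R}(\cdot,\cdot,\pm R)=0$ kills the new boundary contributions at $|Y|=R$ and that $J'$ vanishes on $\partial\widehat{\Omega}_w$), and then defer the remainder to the argument of Lemma~\ref{lem:coupled:inf_sup}. Your write-up is in fact more explicit than the paper's, which simply states ``the remainder of the proof is in analogy to the one of Lemma~\ref{lem:coupled:inf_sup}''; in particular, your careful discussion of why $\gamma$ is independent of $R$ (via the $R$-uniform norm equivalence on $BL_{0,\sharp}(\widehat{\Omega}_R)$ noted before Lemma~\ref{lem:var:UparenR_alphaR}) makes explicit a point the paper leaves implicit.
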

\begin{proof}
   First integrating by parts we find that
   \begin{align*}
      \int_{\partial\widehat{\Omega}(\vx)}
      \wideparen{U}_{\mathrm{int},R} \widehat{n}_2 \mathrm{d}\sigma_{XY}
      &= \int_{\widehat{\Omega}_R(\vx)} Y'
      \partial_Y \wideparen{U}_{\mathrm{int},R} \mathrm{d}(X,Y)
      = \int_{\widehat{\Omega}_R(\vx)}\partial_Y \wideparen{U}_{\mathrm{int},R} \mathrm{d}(X,Y),
      \\
      -\int_{\widehat{\Omega}_R(\vx)} J''\wideparen{U}_{\mathrm{int},R} \mathrm{d}(X,Y)
      &= \int_{\widehat{\Omega}_R(\vx)} J' \partial_Y \wideparen{U}_{\mathrm{int},R} \mathrm{d}(X,Y)
   \end{align*}
   as the boundary terms on $[0,1]\times \{\pm R\}$ vanish where $\wideparen{U}_{\mathrm{int},R} = 0$ as $\wideparen{U}_{\mathrm{int},R} \in BL_{0,\sharp}
   (\widehat{\Omega}_R)$
   and the boundary term $\int_{\partial\widehat{\Omega}(\vx)} J'\wideparen{U}_{\mathrm{int},R} \widehat{n}_2 \mathrm{d}\sigma_{XY}$ vanishes as $J'$ is zero on the wall boundary~$\partial\widehat{\Omega}_w$.

   The remainder of the proof is in analogy to the one of Lemma~\ref{lem:coupled:inf_sup}.
\end{proof}

\begin{theorem}[Well-posedness]
Let $\|J'\|_{L^\infty(\mathbb{R})} \leq \frac12$ and let $\varepsilon_0 > 0$. Then, for all $\varepsilon \in (0, \varepsilon_0]$ the variational formulation~\eqref{eq:coupled_system:truncated:var}
admits a unique solution and there is a constant $C > 0$ independent of $\varepsilon$ such that
\begin{align}
  \vert u_{\mathrm{ext},R} \vert_{H^1(\Omega\setminus\Gamma)}
   + \frac{1}{\sqrt{\varepsilon}} \left(\Vert \alpha_R\Vert_{L^2(\Gamma)}
   + \|\wideparen{U}_{\mathrm{int},R}\|_{L^2(\Gamma, BL(\widehat{\Omega}))}
   +
   \Vert u_{\infty,R}\Vert_{L^2(\Gamma)} \right) \leq C \Vert f \Vert_{L^2(\Omega\setminus\Gamma)}\ .
\end{align}
\end{theorem}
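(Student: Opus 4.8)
The plan is to reproduce the proof of Theorem~\ref{lem:coupled_system:stability} almost verbatim, the only new ingredient being to track that every constant is independent of the truncation parameter $R$ as well as of $\varepsilon$. The two inf-sup conditions for $\mathsf{b}_R$ on $\mathcal{W}_R$, with a constant $\gamma$ independent of $\varepsilon$ and $R$, are already supplied by Lemma~\ref{lem:coupled:truncated:inf_sup}, so the argument again splits into an \emph{a priori} bound (which also yields uniqueness) and a Fredholm argument for existence.

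For the \emph{a priori} bound I would argue as in the untruncated case. A solution of~\eqref{eq:coupled_system:truncated:var} satisfies $\mathsf{b}_R((u_{\mathrm{ext},R},\alpha_R,\wideparen{U}_{\mathrm{int},R},u_{\infty,R}),(v_{\mathrm{ext},R},\beta_R,\wideparen{V}_{\mathrm{int},R},v_{\infty,R}))=\int_{\Omega\setminus\Gamma}f v_{\mathrm{ext},R}\,\mathrm{d}\vx$ for all test functions; choosing $v_{\infty,R}$ freely in the second equation gives $u_{\infty,R}=[u_{\mathrm{ext},R}]$ a.e.\ on $\Gamma$, and since $u_{\mathrm{ext},R}\in H^1(\Omega\setminus\Gamma)$ the one-sided traces put $u_{\infty,R}\in H^{1/2}(\Gamma)\subset L^2(\Gamma)$. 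Applying the first inf-sup estimate~\eqref{eq:coupled:truncated:inf_sup:1} with an arbitrary test function, together with Cauchy-Schwarz on the right-hand side, bounds the square of the $\mathcal{W},\varepsilon$-seminorm by $\tfrac1\gamma\|f\|_{L^2(\Omega\setminus\Gamma)}\|u_{\mathrm{ext},R}\|_{L^2(\Omega\setminus\Gamma)}$; using $u_{\infty,R}=[u_{\mathrm{ext},R}]$ and $\varepsilon\le\varepsilon_0$ to convert part of $\tfrac1\varepsilon\|u_{\infty,R}\|_{L^2(\Gamma)}^2$ into $\tfrac1{2\varepsilon_0}\|[u_{\mathrm{ext},R}]\|_{L^2(\Gamma)}^2$ upgrades $|u_{\mathrm{ext},R}|_{H^1(\Omega\setminus\Gamma)}$ to the full norm $\|u_{\mathrm{ext},R}\|_{H^1_*(\Omega\setminus\Gamma)}$. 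The Poincar\'e inequality on $H^1_*(\Omega\setminus\Gamma)$ and Young's inequality then absorb $\|u_{\mathrm{ext},R}\|_{L^2(\Omega\setminus\Gamma)}$ and deliver the stated estimate, with a constant depending only on $\gamma$, $\varepsilon_0$ and the Poincar\'e constant, hence independent of $\varepsilon$ and $R$. Applied to $f=0$ this forces the solution to vanish, giving uniqueness.

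For existence I would invoke the Fredholm alternative exactly as before: set $\mathsf{b}_{0,R}:=\mathsf{b}_R+\int_\Gamma[u_{\mathrm{ext},R}][v_{\mathrm{ext},R}]\,\mathrm{d}\sigma$; the added term turns the seminorm lower bound of~\eqref{eq:coupled:truncated:inf_sup:1} into a lower bound in the full norm $\|\cdot\|_{\mathcal{W},\varepsilon}$ of~\eqref{eq:norm:Wepsilon} (it supplies precisely the $\|[u_{\mathrm{ext},R}]\|_{L^2(\Gamma)}^2$ that distinguishes $\|\cdot\|_{H^1_*(\Omega\setminus\Gamma)}$ from $|\cdot|_{H^1(\Omega\setminus\Gamma)}$), while the second inf-sup condition is unchanged, so the associated operator $\mathsf{B}_{0,R}\colon\mathcal{W}_R\to\mathcal{W}_R$ is an isomorphism. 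The operator $\mathsf{K}_R$ on $\mathcal{W}_R$ representing $(u_{\mathrm{ext},R},\dots)\mapsto-\int_\Gamma[u_{\mathrm{ext},R}][v_{\mathrm{ext},R}]\,\mathrm{d}\sigma$ with respect to the inner product $(\cdot,\cdot)_{\mathcal{W},\varepsilon}$ is compact, because the jump map $u_{\mathrm{ext},R}\mapsto[u_{\mathrm{ext},R}]$ factors through the trace into $H^{1/2}(\Gamma)$ and the Rellich-Kondrachov compact embedding $H^{1/2}(\Gamma)\hookrightarrow L^2(\Gamma)$. Hence $\mathsf{B}_R=\mathsf{B}_{0,R}+\mathsf{K}_R$ is Fredholm of index zero, and the uniqueness just shown implies surjectivity, i.e.\ a solution exists for every $f\in L^2(\Omega\setminus\Gamma)$.

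I do not expect a genuine obstacle; the only point that needs care is bookkeeping of the constants in the chain of inequalities so that their independence of $R$ is transparent. This is guaranteed since $\gamma$ in Lemma~\ref{lem:coupled:truncated:inf_sup} is $R$-independent, the equivalence of the $H^1(\widehat{\Omega}_R)$-seminorm with the $BL(\widehat{\Omega}_R)$-norm holds with $R$-independent constants, and the Poincar\'e inequality on $H^1_*(\Omega\setminus\Gamma)$ and the Rellich-Kondrachov embedding concern only the fixed domain $\Omega\setminus\Gamma$ and the fixed curve $\Gamma$, which are unaffected by the truncation at $|Y|=R$.
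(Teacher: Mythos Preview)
Your proposal is correct and follows exactly the approach of the paper, which simply states that the proof is in analogy to that of Theorem~\ref{lem:coupled_system:stability} using the inf-sup conditions of Lemma~\ref{lem:coupled:truncated:inf_sup}, with $\mathsf{B}_R$ Fredholm of index~$0$. Your additional remarks on the $R$-independence of the constants (via the $R$-independent $\gamma$ in Lemma~\ref{lem:coupled:truncated:inf_sup}, the $R$-uniform equivalence of $|\cdot|_{H^1(\widehat{\Omega}_R)}$ and $\|\cdot\|_{BL(\widehat{\Omega}_R)}$, and the fact that the Poincar\'e and Rellich--Kondrachov ingredients involve only the fixed macroscopic domain) are apt and make the bookkeeping explicit.
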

\begin{proof}
  The proof is in analogy to the one of Theorem~\ref{lem:coupled_system:stability} using the inf-sup conditions of Lemma~\ref{lem:coupled:truncated:inf_sup}, where the operator $\mathsf{B}_R: \mathcal{W}_R \to \mathcal{W}_R$ associated to the bilinear form $\mathsf{b}_R$ is Fredholm of index $0$.
\end{proof}

Now, we are going to estimate the truncation error. For this we denote by $\wideparen{U}_{\mathrm{int},R}$ also the extension of $\wideparen{U}_{\mathrm{int},R}$ by $0$ onto $\widehat{\Omega}(\vx) \setminus \widehat{\Omega}_R(\vx)$ for any $\vx \in \Gamma$.

\begin{theorem}[Truncation error]\label{CeaGekoppelt} %
For the solution $(u_{\mathrm{ext}},\alpha,\wideparen{U}_{\mathrm{int}},u_{\infty})\in  H^1_*(\Omega \setminus \Gamma) \times L^2(\Gamma) \times L^2(\Gamma, BL_{0,\sharp}(\widehat{\Omega}))  \times L^2(\Gamma)$ of \eqref{eq:coupled_system:var} and the solution $(u_{\textrm{ext},R},\alpha_R, \wideparen{U}_{\mathrm{int},R},u_{\infty,R})\in  H^1_*(\Omega \setminus \Gamma) \times L^2(\Gamma) \times L^2(\Gamma, BL_{0,\sharp}(\widehat{\Omega}_R)) \times L^2(\Gamma)$ of \eqref{eq:coupled_system:truncated:var} it holds
\begin{multline}
    \left| u_{\mathrm{ext},R}-u_{\mathrm{ext}} \right|_{H^1_*(\Omega\setminus\Gamma)}
    + \frac{1}{\sqrt{\varepsilon}} \left( \Vert \alpha_R - \alpha\Vert_{L^2(\Gamma)}
    +
    \Vert \wideparen{U}_{\mathrm{int},R}- \wideparen{U}_{\mathrm{int}}\Vert_{L^2(\Gamma, BL(\widehat{\Omega}))}
    + \Vert u_{\infty,R} - u_{\infty} \Vert_{L^2(\Gamma)}\right) \\
    \leq \frac{C}{\sqrt{\varepsilon}} \exp(-\pi R)
    \label{eq:coupled_system:truncated:err}
\end{multline}
where the constant $C > 0$ is independent of $\varepsilon$ and $R$.
\end{theorem}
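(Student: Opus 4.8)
The plan is a conforming best-approximation (Céa) argument built on the discrete inf--sup condition of Lemma~\ref{lem:coupled:truncated:inf_sup}. First I would establish conformity: extending every $\wideparen{V}_{\mathrm{int}}(\vx,\cdot,\cdot)\in BL_{0,\sharp}(\widehat{\Omega}_R(\vx))$ by zero onto $\widehat{\Omega}(\vx)\setminus\widehat{\Omega}_R(\vx)$ embeds $\mathcal{W}_R$ as a closed subspace of $\mathcal{W}$, and $\mathsf{b}_R=\mathsf{b}$ on $\mathcal{W}_R\times\mathcal{W}_R$. This last point uses that the wall $\widehat{\Omega}_w(\vx)$ and $\supp J''$ are contained in $\{|Y|\le R_1\}\subset\widehat{\Omega}_R(\vx)$ for $R>R_1$, that the contributions on the periodic sides $\{0,1\}\times\mathbb{R}$ cancel, and that the zero extension makes each integral over $\widehat{\Omega}(\vx)$ in $\mathsf{b}$ agree with the corresponding one over $\widehat{\Omega}_R(\vx)$ in $\mathsf{b}_R$. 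Since $(u_{\mathrm{ext}},\alpha,\wideparen{U}_{\mathrm{int}},u_{\infty})$ solves~\eqref{eq:coupled_system:var} tested against all elements of $\mathcal{W}\supset\mathcal{W}_R$ and $(u_{\mathrm{ext},R},\alpha_R,\wideparen{U}_{\mathrm{int},R},u_{\infty,R})$ solves~\eqref{eq:coupled_system:truncated:var} with the same linear form, I obtain Galerkin orthogonality: $\mathsf{b}$ evaluated at the error $e:=(u_{\mathrm{ext}}-u_{\mathrm{ext},R},\alpha-\alpha_R,\wideparen{U}_{\mathrm{int}}-\wideparen{U}_{\mathrm{int},R},u_{\infty}-u_{\infty,R})$ against any $w_R\in\mathcal{W}_R$ vanishes.

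Next I would run the Céa estimate. For an arbitrary $z_R\in\mathcal{W}_R$ the element $z_R-(u_{\mathrm{ext},R},\alpha_R,\wideparen{U}_{\mathrm{int},R},u_{\infty,R})$ lies in $\mathcal{W}_R$, so the first (seminorm) inf--sup estimate of Lemma~\ref{lem:coupled:truncated:inf_sup} applied to it, together with Galerkin orthogonality, bounds its $\mathcal{W},\varepsilon$-seminorm by $\gamma^{-1}$ times the dual norm of $\mathsf{b}(z_R-(u_{\mathrm{ext}},\alpha,\wideparen{U}_{\mathrm{int}},u_{\infty}),\cdot)$ over $\mathcal{W}_R$. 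Since $\mathsf{b}$ is continuous on $(\mathcal{W},\|\cdot\|_{\mathcal{W},\varepsilon})$ with constant of order $\varepsilon^{-1/2}$ --- the only mildly singular terms being $\varepsilon^{-1}\int_\Gamma\alpha[v_{\mathrm{ext}}]\,\mathrm{d}\sigma$ and $\varepsilon^{-1}\int_\Gamma[u_{\mathrm{ext}}]v_\infty\,\mathrm{d}\sigma$, all others being $O(1)$ in that norm (here the bounds $\|{\textstyle\int_{\partial\widehat{\Omega}(\cdot)}}\wideparen{V}_{\mathrm{int}}\widehat{n}_2\,\mathrm{d}_{XY}\sigma\|_{L^2(\Gamma)}\le C\|\wideparen{V}_{\mathrm{int}}\|_{L^2(\Gamma,BL(\widehat{\Omega}))}$ and the $BL$--$H^1$-seminorm equivalence of Lemma~\ref{lem:BL} are used) --- a triangle inequality yields $\|e\|_{\mathcal{W},\varepsilon}\le C\varepsilon^{-1/2}\inf_{z_R\in\mathcal{W}_R}\|(u_{\mathrm{ext}},\alpha,\wideparen{U}_{\mathrm{int}},u_{\infty})-z_R\|_{\mathcal{W},\varepsilon}$ with $C$ independent of $\varepsilon,R$. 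Passing from the $\mathcal{W},\varepsilon$-seminorm to the full norm on the $u_{\mathrm{ext}}$-slot uses $u_{\infty}=[u_{\mathrm{ext}}]$, $u_{\infty,R}=[u_{\mathrm{ext},R}]$ from the second equations of~\eqref{eq:coupled_system:var},~\eqref{eq:coupled_system:truncated:var}, the trace theorem, the Poincaré inequality on $H^1_*(\Omega\setminus\Gamma)$, and $\varepsilon\le\varepsilon_0$. I then choose $z_R=(u_{\mathrm{ext}},\alpha,\Pi_R\wideparen{U}_{\mathrm{int}},u_{\infty})\in\mathcal{W}_R$, with $\Pi_R$ the cut-off~\eqref{eq:Pi_R:def} applied for each fixed $\vx\in\Gamma$, so that the infimum is at most $\varepsilon^{-1/2}\|\wideparen{U}_{\mathrm{int}}-\Pi_R\wideparen{U}_{\mathrm{int}}\|_{L^2(\Gamma,BL(\widehat{\Omega}))}$, giving $\|e\|_{\mathcal{W},\varepsilon}\le C\varepsilon^{-1}\|\wideparen{U}_{\mathrm{int}}-\Pi_R\wideparen{U}_{\mathrm{int}}\|_{L^2(\Gamma,BL(\widehat{\Omega}))}$.

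Finally I would estimate this interpolation error cell-by-cell. For almost every $\vx\in\Gamma$, testing the third and fourth equations of~\eqref{eq:coupled_system:var} with functions concentrated near $\vx$ shows that $(\wideparen{U}_{\mathrm{int}}(\vx,\cdot,\cdot),\alpha(\vx))$ solves the one-cell problem~\eqref{eq:var:Uparen_alpha} on $\widehat{\Omega}(\vx)$ with datum $u_{\infty}(\vx)$; hence, for $R>2R_1$, the interpolation bound derived inside the proof of Lemma~\ref{lem:UparenR_alpha:err} applies pointwise and gives $|\Pi_R\wideparen{U}_{\mathrm{int}}(\vx,\cdot,\cdot)-\wideparen{U}_{\mathrm{int}}(\vx,\cdot,\cdot)|_{H^1(\widehat{\Omega}(\vx))}\le C\exp(-\pi R)\,|u_{\infty}(\vx)|$, with $C$ uniform in $\vx$ because the constants that enter it (the radii $R_0,R_1$, the trace/series constant in~\eqref{eq:Uparen:H12norm}, the $BL$--$H^1$-seminorm equivalence constant of Lemma~\ref{lem:BL}) are uniform over the finite partition of $\Gamma$ underlying the locally periodic assumption. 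Squaring, integrating over $\Gamma$ and using the seminorm equivalence once more gives $\|\wideparen{U}_{\mathrm{int}}-\Pi_R\wideparen{U}_{\mathrm{int}}\|_{L^2(\Gamma,BL(\widehat{\Omega}))}\le C\exp(-\pi R)\|u_{\infty}\|_{L^2(\Gamma)}$, and inserting the stability bound $\|u_{\infty}\|_{L^2(\Gamma)}\le C\sqrt{\varepsilon}\,\|f\|_{L^2(\Omega\setminus\Gamma)}$ of Theorem~\ref{lem:coupled_system:stability} into the Céa estimate produces $\|e\|_{\mathcal{W},\varepsilon}\le C\varepsilon^{-1/2}\exp(-\pi R)\|f\|_{L^2(\Omega\setminus\Gamma)}$, which is~\eqref{eq:coupled_system:truncated:err}.

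The inf--sup/Céa skeleton is already prepared by Lemma~\ref{lem:coupled:truncated:inf_sup}, so the main obstacle is the $\varepsilon$-bookkeeping: one must verify that the continuity constant of $\mathsf{b}$ in the $\varepsilon$-weighted norm is no worse than $\varepsilon^{-1/2}$, that the three factors $\varepsilon^{-1/2}$ (continuity), $\varepsilon^{-1/2}$ (the $\mathcal{W},\varepsilon$-weight on the near-field slot) and $\varepsilon^{+1/2}$ (the $L^2(\Gamma)$-stability of $u_{\infty}$) combine to the claimed $\varepsilon^{-1/2}$, and that the one-cell interpolation estimate holds with a constant uniform in $\vx$, which must be read off from the locally periodic structure rather than from Lemma~\ref{lem:UparenR_alpha:err} as stated. (In fact, since the best-approximation error lives entirely in the near-field slot, where $\mathsf{b}$ is $\varepsilon$-robustly continuous, a slightly sharper version of the same argument removes even the factor $\varepsilon^{-1/2}$; only the weaker bound~\eqref{eq:coupled_system:truncated:err} is asserted here.)
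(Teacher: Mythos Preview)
Your approach is essentially the same as the paper's: conformity of $\mathcal{W}_R$ in $\mathcal{W}$, Galerkin orthogonality, a C\'ea-type bound from the inf--sup condition, and then the choice $z_R=(u_{\mathrm{ext}},\alpha,\Pi_R\wideparen{U}_{\mathrm{int}},u_{\infty})$ combined with the cell-wise interpolation estimate from the proof of Lemma~\ref{lem:UparenR_alpha:err} integrated over~$\Gamma$. Your $\varepsilon$-bookkeeping is in fact more explicit than the paper's (which applies Cauchy--Schwarz with an unspecified constant and then invokes only the boundedness of $\|u_\infty\|_{L^2(\Gamma)}$); your parenthetical observation---that the best-approximation defect lives only in the near-field slot, where $\mathsf{b}$ is $\varepsilon$-robustly continuous---is precisely what keeps that constant independent of~$\varepsilon$.
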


\begin{proof}
  Using the triangle inequality we can assert that for any  $(v_{\mathrm{ext},R},\beta_R, \wideparen{V}_{\mathrm{int},R},v_{\infty,R}) \in \mathcal{W}_R$
  \begin{multline*}
     \left| u_{\textrm{ext},R}-u_{\mathrm{ext}} \right|_{H^1_*(\Omega\setminus\Gamma)}
    + \frac{1}{\sqrt{\varepsilon}} \left( \Vert \alpha_R - \alpha\Vert_{L^2(\Gamma)}
    +
    \Vert \wideparen{U}_{\mathrm{int},R}- \wideparen{U}_{\mathrm{int}}\Vert_{L^2(\Gamma, BL(\widehat{\Omega}))}
    + \Vert u_{\infty,R} - u_{\infty} \Vert_{L^2(\Gamma)}\right) \\
    \leq  \left| v_{\textrm{ext},R}-u_{\mathrm{ext}} \right|_{H^1_*(\Omega\setminus\Gamma)} + \left| u_{\textrm{ext},R}-v_{\mathrm{ext},R} \right|_{H^1_*(\Omega\setminus\Gamma)} \hspace{12em}\\
    + \frac{1}{\sqrt{\varepsilon}} \bigg(
    \Vert \beta_R - \alpha\Vert_{L^2(\Gamma)} + \Vert \alpha_R - \beta_R\Vert_{L^2(\Gamma)}
    +
    \Vert \wideparen{V}_{\mathrm{int},R}- \wideparen{U}_{\mathrm{int}}\Vert_{L^2(\Gamma, BL(\widehat{\Omega}))} \\[-0.7em]
    +
    \Vert \wideparen{U}_{\mathrm{int},R}- \wideparen{V}_{\mathrm{int},R}\Vert_{L^2(\Gamma, BL(\widehat{\Omega}))}
    + \Vert v_{\infty,R} - u_{\infty} \Vert_{L^2(\Gamma)}
    + \Vert u_{\infty,R} - v_{\infty,R} \Vert_{L^2(\Gamma)}
    \bigg).
  \end{multline*}
  With the inf-sup conditions in Lemma~\ref{lem:coupled:inf_sup} and Galerkin orthogonality we find that
  \begin{multline*}
     \left| u_{\textrm{ext},R}-v_{\mathrm{ext},R} \right|_{H^1_*(\Omega\setminus\Gamma)}
    + \frac{1}{\sqrt{\varepsilon}} \bigg(
    \Vert \alpha_R - \beta_R\Vert_{L^2(\Gamma)}
    +
    \Vert \wideparen{U}_{\mathrm{int},R}- \wideparen{V}_{\mathrm{int},R}\Vert_{L^2(\Gamma, BL(\widehat{\Omega}))}
    + \Vert u_{\infty,R} - v_{\infty,R} \Vert_{L^2(\Gamma)}
    \bigg) \\
    \leq \frac{1}{\gamma} \sup_{(w_{\textrm{ext},R}, \delta_R, \wideparen{W}_R, w_\infty) \in \mathcal{W}_R \setminus \{ \mathbf{0} \} }
    \hspace{-3em}
    \frac{\left| \mathsf{b}((u_{\textrm{ext},R}-v_{\mathrm{ext},R}, \alpha_R - \beta_R, \wideparen{U}_{\mathrm{int},R}- \wideparen{V}_{\mathrm{int},R}, u_{\infty,R} - v_{\infty,R}), (w_{\textrm{ext},R}, \delta_R, \wideparen{W}_R, w_\infty))\right|}{\left\| (w_{\textrm{ext},R}, \delta_R, \wideparen{W}_R, w_\infty)\right\|_{\mathcal{W},\varepsilon}} \\
    \leq \frac{1}{\gamma} \sup_{(w_{\textrm{ext},R}, \delta_R, \wideparen{W}_R, w_\infty) \in \mathcal{W}_R \setminus \{ \mathbf{0} \} }
    \hspace{-3em}
    \frac{\left| \mathsf{b}((u_{\textrm{ext}}-v_{\mathrm{ext},R}, \alpha - \beta_R, \wideparen{U}_{\mathrm{int}}- \wideparen{V}_{\mathrm{int},R}, u_{\infty} - v_{\infty,R}), (w_{\textrm{ext},R}, \delta_R, \wideparen{W}_R, w_\infty))\right|}{\left\| (w_{\textrm{ext},R}, \delta_R, \wideparen{W}_R, w_\infty)\right\|_{\mathcal{W},\varepsilon}} \ .
  \end{multline*}
Applying the Cauchy-Schwarz inequality we can assert with a constant $C > 0$ that
\begin{multline*}
    \left| u_{\textrm{ext},R}-u_{\mathrm{ext}} \right|_{H^1_*(\Omega\setminus\Gamma)}
    + \frac{1}{\sqrt{\varepsilon}} \left( \Vert \alpha_R - \alpha\Vert_{L^2(\Gamma)}
    +
    \Vert \wideparen{U}_{\mathrm{int},R}- \wideparen{U}_{\mathrm{int}}\Vert_{L^2(\Gamma, BL(\widehat{\Omega}))}
    + \Vert u_{\infty,R} - u_{\infty} \Vert_{L^2(\Gamma)}\right) \\
    \leq  \left(1 + \frac{C}{\gamma}\right) \left( \left| v_{\textrm{ext},R}-u_{\mathrm{ext}} \right|_{H^1_*(\Omega\setminus\Gamma)}
    + \frac{1}{\sqrt{\varepsilon}} \bigg(
    \Vert \beta_R - \alpha\Vert_{L^2(\Gamma)}
    +
    \Vert \wideparen{V}_{\mathrm{int},R}- \wideparen{U}_{\mathrm{int}}\Vert_{L^2(\Gamma, BL(\widehat{\Omega}))}
    + \Vert v_{\infty,R} - u_{\infty} \Vert_{L^2(\Gamma)}
    \bigg)\right).
\end{multline*}
Now, taking $v_{\mathrm{ext},R} = u_{\mathrm{ext}}$, $\beta_R = \alpha$ and $v_{\infty,R} = u_\infty$ we find that
\begin{multline}
   \left| u_{\textrm{ext},R}-u_{\mathrm{ext}} \right|_{H^1_*(\Omega\setminus\Gamma)} \\
    + \frac{1}{\sqrt{\varepsilon}} \left( \Vert \alpha_R - \alpha\Vert_{L^2(\Gamma)}
    +
    \Vert \wideparen{U}_{\mathrm{int},R}- \wideparen{U}_{\mathrm{int}}\Vert_{L^2(\Gamma, BL(\widehat{\Omega}))}
    + \Vert u_{\infty,R} - u_{\infty} \Vert_{L^2(\Gamma)}\right) \\
    \leq  \left(1 + \frac{C}{\gamma}\right) \frac{1}{\sqrt{\varepsilon}} \Vert \wideparen{V}_{\mathrm{int},R}- \wideparen{U}_{\mathrm{int}}\Vert_{L^2(\Gamma, BL(\widehat{\Omega}))}\ .
    \label{eq:coupled_system:truncated:bestappr_err}
\end{multline}
Using the interpolant $\Pi_R$, defined in~\eqref{eq:Pi_R:def}, we can assert in analogy to the proof of Lemma~\ref{lem:UparenR_alpha:err} for a fixed $\vx \in \Gamma$ that
\begin{align*}
   \Vert \wideparen{U}_{\mathrm{int}}(\vx,X,Y)- (\Pi_R \wideparen{U}_{\mathrm{int}}) (\vx,X,Y)\Vert_{BL(\widehat{\Omega})} \leq C |u_\infty(\vx)| \exp(-\pi R)\ .
\end{align*}
Now, taken the $L^2(\Gamma)$-norm on both sides we find that
\begin{align*}
    \Vert \wideparen{U}_{\mathrm{int}} - (\Pi_R \wideparen{U}_{\mathrm{int}} ) \Vert_{L^2(\Gamma, BL(\widehat{\Omega}))} \leq C \|u_\infty \|_{L^2(\Gamma)} \exp(-\pi R)\ .
\end{align*}
Inserting $\wideparen{V}_{\mathrm{int},R} = \Pi_R \wideparen{U}_{\mathrm{int}}$ in~\eqref{eq:coupled_system:truncated:bestappr_err} and as $\|u_\infty \|_{L^2(\Gamma)}$ is bounded by assumption the inequality~\eqref{eq:coupled_system:truncated:err} follows and the proof is complete.
\end{proof}

\end{document}